\newtheorem{teo}{Theorem}[section]
\newtheorem{lema}{Lemma}[section]
\newtheorem{obs}{Remark}[section]
\newcommand{\Ste}{\text{Ste}}
\newcommand{\Bi}{\text{Bi}}
\newcommand\be{\begin{equation}}
\newcommand\ee{\end{equation}}
\DeclareMathOperator\erf{erf}
\newcommand\pder[2][]{\ensuremath{\frac{\partial#1}{\partial#2}}}
\newcommand{\Nr}{{\overline{N}}}
\newcommand{\Lr}{{\overline{L}}}
\newcommand{\vr}{{\overline{v}}}
\newcommand{\bint}{\displaystyle\int}
\newcommand{\dd}{{\rm{d}}}
\newcommand{\Pe}{{\rm{Pe}}}
\newcommand{\wtL}{\widetilde{L}}
\newcommand{\wtN}{\widetilde{N}}
\newcommand{\wtmu}{\widetilde{\mu}}
\newcommand{\wtk}{\widetilde{k}}
\begin{document}

\title{Stefan problems for the  diffusion-convection equation with temperature-dependent thermal coefficients}
\author{
Julieta Bollati$^{1}$, Adriana C. Briozzo $^{1}$ \\
\small {{$^1$} CONICET-Depto. Matem\'atica, FCE, Univ. Austral, Paraguay 1950} \\
\small {S2000FZF Rosario, Argentina.}\\
\small{Email: jbollati@austral.edu.ar, abriozzo@austral.edu.ar,}}

\date{}

\maketitle
\abstract{
Different one-phase Stefan problems for a semi-infinite slab are considered, involving a moving phase change material as well as  temperature
dependent thermal coefficients.
Existence of at least one similarity solution is proved imposing a Dirichlet, Neumann, Robin or radiative-convective boundary condition at the fixed face. The velocity that arises in the convective term of the  diffusion-convection equation is assumed to depend on temperature and time.
In each case, an equivalent ordinary differential problem is obtained giving rise to a system of an integral equation coupled with a condition for the parameter that characterizes the free boundary, which is solved though a double-fixed point analysis. Some solutions for particular thermal coefficients  are provided.

}

\noindent\textbf{Keywords:} Stefan problem; diffusion-convection  equation; variable thermal coefficients; radiative-convective condition; fixed point; similarity solutions.

\smallskip

%\noindent\textbf{AMS Classification}: 34A34, 47H10, 33E30, 34A12, 35R35.

\section{Introduction}

Stefan problems constitute a broad field of study since they arise in different areas of engineering, biology, geoscience and industry \cite{AlSo1993,Ca1984,Cr1984,Lu1991}.
The classical one-phase Stefan problem models a phase-change thermal process that aims to describe the temperature of the material as well as the location of the interface that separate both phases. Mathematically it consists on finding a solution to the heat-conduction equation
in an unknown region which has also to be determined,
imposing an initial condition, boundary conditions, and the
Stefan condition at the moving interface.

In many physical processes, the phase change material is
allowed to move when the phase change occurs. 
Recently, in \cite{Tu2018} a
Stefan problem  which models the undergoing phase transition of a moving material where the 
phase change and heat distribution in the medium are affected from
both the conduction and convection of heat  was considered.

The diffusion-convection equation has multiple applications, for example, to ground water hydrology, oil reservoir engineering and  drug propagation in the arterial tissues.
More articles where a convective term is involved in the parabolic equation are \cite{BrNaTa1997,BrNaTa2010,KuRa2020C,Ro1982,SiKuRa2019A}.

  In particular, in  \cite{SiKuRa2019A} a Stefan problem with variable thermal coefficients and moving phase
change material was studied. It was considered a thermal conductivity and a specific heat whose  dependence on the temperature   was assumed to be linear.

Motivated by \cite{SiKuRa2019A,Tu2018}, in this paper we consider a free boundary problem in a semi-infinite domain $x>0$ for the nonlinear diffusion-convection equation with a convective term that involves temperature-dependent thermal coefficients.

 The  problem consists in finding the temperature $T=T(x,t)$ in the liquid region and the free boundary $x=s(t)$ such that:
\begin{subequations}
\begin{align}
& \rho(T) c(T) \frac{\partial T}{\partial t}=\frac{\partial}{\partial x} \left(k(T)\frac{\partial T}{\partial x}\right)-v(T)\pder[T]{x},& 0<x<s(t), \quad t>0, \label{Ec:Calor}\\
& T(0,t)=T^*, &t>0, \label{Cond:Temp x=0}\\
&  T(s(t),t)=T_m, &t>0, \label{Cond:TempCambioFase}\\
&  k\left(T(s(t),t)\right)\frac{\partial T}{\partial x}(s(t),t)=-\rho_0 \ell \dot s(t), &t>0, \label{Cond:Stefan}\\
& s(0)=0,\label{Cond:FronteraInicial}
\end{align}
\end{subequations}
where $\rho(T)$, $c(T)$ and $k(T)$ are the mass density, the specific heat and the thermal conductivity of the body, respectively 
defined as\begin{equation}
\begin{array}{lll}
\rho(T)(x,t)=\rho(T(x,t)),   \quad c(T)(x,t)=c(T(x,t)), \quad k(T)(x,t)=k(T(x,t)).\quad 
\end{array}
\end{equation}

Some  models involving temperature-dependent
thermal coefficients can  be found in \cite{BoNaSeTa2019-ChHr,BrNa2014-1,BrNa2014-2,CeSaTa2018-a,ChSu1974,NaTa2000,OlSu1987}.

 The presence of a convection term in equation (\ref{Ec:Calor}) represents the fact that the phase change material is allowed to move with an  unidirectionally speed $v=v(T)$.
Throughout this paper the unidirectional speed $v$ is given by 
\begin{equation}
v(T)=\dfrac{\mu(T)}{\sqrt{t}}\label{velocidad-convectiva}
\end{equation}
with $\mu(T)(x,t)=\mu(T(x,t))$. 

 \medskip

We assume that  $T_m$ is the phase change temperature and $T^*>T_m$ is the temperature imposed at the fixed face $x=0$. Condition \eqref{Cond:Stefan} represents the Stefan condition where $\rho_0>0$ is a constant mass density and $\ell>0$ is the latent heat  of fusion per unit mass.

We will  consider three more problems that arise replacing the Dirichlet condition at the fixed face by other type of conditions.

On one hand we stablish the problem with a Neumann condition at the fixed face, replacing (\ref{Cond:Temp x=0}) by
\begin{equation}\label{Cond:Flujo x=0}
k(T(0,t)) \pder[T]{x}(0,t)=-\dfrac{q}{\sqrt{t}},\qquad t>0, \tag{\ref{Cond:Temp x=0}$^\star$}
\end{equation} 
where  $q>0$ is a given constant and $-\tfrac{q}{\sqrt{t}}$ represents the prescribed flux at $x=0$. Some bibliography imposing this kind of condition can be found in \cite{BoNaSeTa2020,BrNaTa2007,NaTa2006,LoTa2001,Ta1981-1982}

On the other hand, we consider a problem governed by \eqref{Ec:Calor} and \eqref{Cond:TempCambioFase}-\eqref{Cond:FronteraInicial} where a Robin condition is imposed:
\begin{equation}\label{Cond:Convectiva x=0}
k(T(0,t))\pder[T]{x}(0,t)=\frac{h}{\sqrt{t}}\left[T(0,t)-T^* \right],\qquad t>0, \tag{\ref{Cond:Temp x=0}$^\dag$}
\end{equation} 
being $h$ the coefficient that characterizes the heat transfer at the fixed face and $T^*$ the bulk temperature applied at a neighbourhood of $x=0$ with $T^*>T(0,t)>T_m$  \cite{BrNa2016,BrNa2019,BrNa2015,BrTa2010-2,Ta2017}.

Finally we define the problem that arises  replacing the Dirichlet condition (\ref{Cond:Temp x=0}) by a radiative and convective condition 
\begin{equation}\label{Cond:RadiactivaConvectiva x=0}
k(T(0,t))\pder[T]{x}(0,t)=\frac{h}{\sqrt{t}}\left[T(0,t)-T^* \right]+\frac{\sigma \epsilon}{\sqrt{t}}\left[T^4(0,t)-{T^*}^4 \right],\qquad t>0, \tag{\ref{Cond:Temp x=0}$^{\dag\dag}$}
\end{equation} 
where $\sigma$ is the Stefan-Boltzmann constant and $\epsilon>0$ is the coefficient that characterizes the radiation shape factor, assuming \mbox{$T^*>T(0,t)>T_m$}. Notice that the first term of the r.h.s of condition \eqref{Cond:RadiactivaConvectiva x=0} coincides to the r.h.s of the Robin condition \eqref{Cond:Convectiva x=0}. This kind of boundary condition also appears in \cite{ChYe1975,HuLi2020,WaWaZeChYa2018,YaHu1979}

\medskip

The aim of this paper is to provide sufficient conditions on data in order to guarantee existence of at least one solution of a similarity type to four different  problems that differ from each other in the boundary condition imposed at the fixed face: temperature, flux, convective or radiative-convective condition.

  The manuscript is organised as follows. In Section 2 we analyse  the existence of at least one similarity solution to the problem governed by \eqref{Ec:Calor}-\eqref{Cond:FronteraInicial} where a constant temperature is imposed at the fixed face. Introducing the similarity variable,  an equivalent  ordinary differential problem is obtained, giving rise to nonlinear integral equation  coupled with a condition for the parameter that characterizes the free boundary. This system is solved by  a double fixed point analysis. 
{\color{black}In a similar way, in Section 3 we stablish  the existence of solution to the Stefan problem that arises when we replace the Dirichlet condition \eqref{Cond:Temp x=0} by  a Neumann one \eqref{Cond:Flujo x=0}. Section 4 is devoted to the study the problem where a Robin condition \eqref{Cond:Convectiva x=0} is imposed at $x=0$. Moreover, we analyse the convergence of the solution when $h\to +\infty$, and recover for $v=0$ a result given in \cite{BrNa2019}. Finally, in Section 5 we generalize Section 4 by showing that there exists at least one solution to the problem that arises when  considering a radiative-convective condition \eqref{Cond:RadiactivaConvectiva x=0} at the fixed face.

In the last section we present different solutions  obtained for some particular cases. On one hand, we consider  constant thermal coefficients and a velocity given by $v(T)=\frac{\mu(T)}{\sqrt{t}}$ with
\be 
\mu(T)=\rho_0 c_0 \sqrt{\alpha_0} \; \Pe,
\ee
where Pe denotes the Peclet number. The solutions given by \cite{Tu2018} for Dirichlet and Neumann condition at fixed face are recovered. 
On the other hand, we analyse the particular case when the thermal coefficients involved are linear functions of the temperature as  in  \cite{SiKuRa2019A}}.

\section{Dirichlet condition}

The following section is devoted to the analysis of the Stefan problem given by \eqref{Ec:Calor}-\eqref{Cond:FronteraInicial}.
 
{\color{black}Mathematically we will consider that   $\rho,c, k, v$ and $\mu$ are functions defined on $\mathscr{C}=C^0(\mathbb{R}_0^+\times\mathbb{R}^+)\cap L^{\infty}(\mathbb{R}_0^+\times\mathbb{R}^+)$
and we denote the norm of $\rho(T)$ by
\be
||\rho(T)||=\max\limits_{(x,t)\in\mathbb{R}_0^+\times\mathbb{R}^+} |\rho(T)(x,t)| .
\ee 
The same norm will be used for $c,k,v$ and $\mu$.

}

We will assume:
\begin{eqnarray}
&&\left\{ \begin{array}{ll}
{\rm (a)\ There\ exists\  } k_m>0 {\rm \ and\ } k_M>0 {\rm \  such\ that\ }\\[2mm]
\qquad k_m\leq k(T)\leq k_M,\qquad \forall\; T\in  \mathscr{C}.\\[2mm]
{\rm (b)\ There\ exists\  } \wtk>0 {\rm \  such\ that\ }\\[2mm]
\qquad ||k(T_1)-k(T_2)) ||\leq \wtk\; \|T_1-T_2\|,\;\; 
 \qquad \forall\;  T_1,T_2\in \mathscr{C}.
\end{array}\right. \label{Hipotesis-k}\qquad
\end{eqnarray}
\begin{eqnarray}
&&\left\{ \begin{array}{ll} 
{\rm (a)\ There\ exists\  } \gamma_m>0 {\rm \ and\ } \gamma_M>0 {\rm \  such\ that\ }\\[2mm]
\qquad \gamma_m\leq \rho(T) c(T)\leq \gamma_M,\qquad \forall\; T\in \mathscr{C}. \;\;\\[2mm]
{\rm (b)\ There\ exists\  } \widetilde{\gamma}>0 {\rm \  such\ that\ }\\[2mm]
\qquad ||\rho(T_1)c(T_1)-\rho(T_2)c(T_2)||\leq \widetilde{\gamma} \|T_1-T_2\|, \quad  \forall\; T_1,T_2\in \mathscr{C}. 
\end{array}\right. \label{Hipotesis-rhoc}
\end{eqnarray}
\begin{eqnarray}
&&\left\{ \begin{array}{ll} 
{\rm (a)\ There\ exists\  } \nu_m>0 {\rm \ and\ } \nu_M>0 {\rm \  such\ that\ }\\[2mm]
\qquad  \nu_m  \leq \mu(T)\leq \nu_M ,\qquad \forall \; T\in \mathscr{C}. \\[2mm]
{\rm (b)\ There\ exists\  } \widetilde{\nu}>0 {\rm \  such\ that\ }\\[2mm]
\qquad ||\mu(T_1)-\mu(T_2) ||\leq \widetilde{\nu}\;\|T_1-T_2\|,\quad \forall \; T_1,T_2\in  \mathscr{C}. \\[0mm] 
\end{array}\right. \qquad\qquad \label{Hipotesis-v}
\end{eqnarray}
%Recalling that $T$ is the unknown temperature to be determined, $T^*$ is the temperature imposed at the fixed face and $T_m$ is the phase change temperature with $T_m<T<T^*$ (fusion problem), 
\medskip
If we  introduce the following change of variables:
\begin{equation}\label{cambio-theta}
\theta=\dfrac{T-T^*}{T_m-T^*}>0,
\end{equation}
 we have
$$\pder[T]{t}=(T_m-T^*) \pder[\theta]{t},\qquad\qquad \pder[T]{x}=(T_m-T^*) \pder[\theta]{x},\qquad\qquad \pder[^2 T]{x^2}=(T_m-T^*) \pder[^2\theta]{x^2}.$$
Taking into account that  $T=T(\theta)=(T_m-T^*)\theta+T^*$ we can define the following functions:
\begin{align}
&\Lr(\theta)=L(T(\theta)),\qquad\qquad \Nr(\theta)=N(T(\theta)), \qquad \qquad
\vr(\theta)=\dfrac{v(T(\theta))}{\rho_0 c_0 }\qquad \label{LNv-Raya}
\end{align}
where 
\begin{align}
L(T)=\dfrac{k(T)}{k_0},\qquad\qquad\qquad  N(T)=\dfrac{\rho(T)c(T)}{\rho_0 c_0}\label{Def-LN}
\end{align}
and $k_0$, $\rho_0$, $c_0$ and $\alpha_0=\dfrac{k_0}{\rho_0  c_0}$ are the reference thermal conductivity, mass density, specific heat and thermal diffusivity, respectively.

Therefore,  the problem (\ref{Ec:Calor})-(\ref{Cond:FronteraInicial}) becomes:
\begin{subequations}
\begin{align}
& \Nr(\theta) \pder[\theta]{t}=\alpha_0 \frac{\partial}{\partial x}\left( \Lr(\theta)\pder[\theta]{x} \right)-\vr(\theta) \pder[\theta]{x}
,& 0<x<s(t), \quad t>0, \label{Ec:Calor-Theta}\\
& \theta(0,t)=0, &t>0, \label{Cond:Temp x=0-Theta}\\
&  \theta(s(t),t)=1, &t>0, \label{Cond:TempCambioFase-Theta}\\
&\Lr(\theta(s(t),t))\pder[\theta]{x}(s(t),t)=\dfrac{ \dot{s}(t)}{\alpha_0 \Ste} , &t>0, \label{Cond:Stefan-Theta}\\
& s(0)=0,\label{Cond:FronteraInicial-Theta}
\end{align}
\end{subequations}
where $
\Ste=\dfrac{(T^*-T_m)c_0}{\ell}>0$  is the Stefan number.

If we introduce the similarity variable $\xi=\dfrac{x}{2\sqrt{\alpha_0 t}}$ and assume a  similarity type solution
\be\label{similarity-variable}
\theta(x,t)=f(\xi),
\ee
then conditions (\ref{Cond:TempCambioFase-Theta})-(\ref{Cond:Stefan-Theta}) yields to a free boundary given by
\be 
s(t)=2\lambda\sqrt{\alpha_0 t}.
\ee
where $\lambda>0$ is a constant to be determined.

Let us define
\be\label{LNv-Estrella}
L^*(f)=\Lr(\theta),\qquad\qquad  N^*(f)=\Nr(\theta), \qquad\qquad  v^*(f)=\vr(\theta).
\ee
Then, \eqref{Ec:Calor} turns into the following ordinary differential equation
\be
\Big(L^*(f)f'(\xi)\Big)'+2 N^*(f)\; \xi\;f'(\xi)-\dfrac{2\sqrt{t}}{\sqrt{\alpha_0}} v^*(f)\; f'(\xi)=0 ,\qquad  0<\xi<\lambda. 
\ee
Taking into account that the unidirectional speed $v$ is given by \eqref{velocidad-convectiva}, and  from \eqref{LNv-Raya} and  \eqref{LNv-Estrella}, we define
\be\label{Def:muEstrella}
\mu^*(f)=\dfrac{v^*(f) \sqrt{t}}{\sqrt{\alpha_0}}=\dfrac{\mu(T)}{\sqrt{\rho_0 c_0 k_0}}.
\ee
Therefore we reduce the problem (\ref{Ec:Calor-Theta})-(\ref{Cond:FronteraInicial-Theta}) into  an ordinary differential problem defined by:
\begin{subequations}
\begin{align}
&\Big(L^*(f)f'(\xi)\Big)'+2 f'(\xi) \Big( N^*(f) \xi-\mu^*(f)\Big)=0 ,& 0<\xi<\lambda, \label{EcDif-Temp}\\
& f(0)=0,  \label{Condf=0-Temp}\\
&  f(\lambda)=1,  \label{Condf=1-Temp}\\
&L^*(f(\lambda)) f'(\lambda)=\dfrac{2\lambda}{\Ste}. \label{EcLambda-Temp}
\end{align}
\end{subequations}
From this ordinary differential problem \eqref{EcDif-Temp}-\eqref{EcLambda-Temp} we get that  $f$ and $\lambda$ must satisfy the following  integral equation
\be 
f(\xi)=\dfrac{\Phi(f)(\xi)}{\Phi(f)(\lambda)},\qquad 0\leq\xi\leq \lambda, \label{Volterra-Temp}
\ee
together with the following condition
\be
\dfrac{\Ste}{2} \dfrac{E(f)(\lambda)}{\Phi(f)(\lambda)}=\lambda,\label{CondLambda-Temp} 
\ee
where
\be
\Phi(f)(\xi)=\bint_0^{\xi} \dfrac{E(f)(z)}{L^*(f)(z)}\; \dd z,\label{Def-Phi}
\ee
with
\be
E(f)(z)=\dfrac{U(f)(z)}{I(f)(z)},\label{Def-E}
\ee
\be\label{Def-UI}
U(f)(z)=\exp\Big(2\bint_0^z \dfrac{\mu^*(f)(\sigma)}{L^*(f)(\sigma)}\; \dd\sigma\Big),\quad I(f)(z)=\exp\Big(2\bint_0^z \dfrac{\sigma N^*(f)(\sigma)}{L^*(f)(\sigma)}\; \dd\sigma\Big). 
\ee

\medskip
In order to analyse the existence of solution to the problem \eqref{Volterra-Temp}-\eqref{CondLambda-Temp}, let us study in the  first instance, the integral equation \eqref{Volterra-Temp}  assuming that  $\lambda>0$ is a fixed given constant.

Consider the space $C^0[0,\lambda]$ of continuous real-valued functions defined on $[0,\lambda]$  endowed with the supremum norm
\be
\Vert f\Vert=\max\limits_{\xi \in [0,\lambda]}\vert f(\xi)\vert.
\ee
Let us define the operator  $\mathcal{H}$ on $C^0[0,\lambda]$ given by the r.h.s of equation \eqref{Volterra-Temp}:
\be  
\mathcal{H}(f)(\xi)=\dfrac{\Phi(f)(\xi)}{\Phi(f)(\lambda)}.\label{Def-H}
\ee

{%\color{red}

%* Ojo aca...habiamos pensado en $C^1[0,\lambda]$ pero corroborando encontre que no es Banach con la norma del supremo. Para que lo sea, hay que definir la norma de $C^1$ que es la suma de la norma infinito de  $f$ y de la norma infinito de $f'$}

Then, as $(C^0[0,\lambda], \Vert\cdot\Vert)$ is a Banach space we  use the fixed point Banach theorem to prove that for each $\lambda>0$ there exists a unique $f$ such that
\be
\mathcal{H}(f)(\xi)=f(\xi) ,\qquad 0\leq \xi\leq  \lambda\label{PtoFijo-Temp}
\ee
which is  the solution  to \eqref{Volterra-Temp}.

From the assumptions \eqref{Hipotesis-k}, \eqref{Hipotesis-rhoc} and \eqref{Hipotesis-v} we obtain that $L^*$, $N^*$ and $\mu^*$ are bounded and Lipschitz continuous. That is to say
\begin{eqnarray}
&&\left\{ \begin{array}{ll} L^*(f) \mbox{\ is\ such that:}\\[2mm]
{\rm (a)\ There\ exists\  } L_m=\tfrac{k_m}{k_0}>0 {\rm \ and\ } L_M=\tfrac{k_M}{k_0}>0 {\rm \  such\ that\ }\\[2mm]
\qquad L_m\leq L^*(f)\leq L_M,\qquad \forall f\in C^0(\mathbb{R}_0^+)\cap L^{\infty}(\mathbb{R}_0^+). \\[2mm]
{\rm (b)\ There\ exists\  } \wtL=\tfrac{\wtk(T^*-T_m)}{k_0}>0 {\rm \  such\ that\ }\\[2mm]
\qquad \|L^*(f_1)-L^*(f_2) \|\leq \wtL \|f_1-f_2\|,\quad \forall f_1,f_2\in C^0(\mathbb{R}_0^+)\cap L^{\infty}(\mathbb{R}_0^+). 
\end{array}\right. \label{Hipotesis-L*}
\end{eqnarray}
\begin{eqnarray}
&&\left\{ \begin{array}{ll} N^*(f) \mbox{\ is\ such that:}\\[2mm]
{\rm (a)\ There\ exists\  } N_m=\tfrac{\gamma_m}{\rho_0 c_0}>0 {\rm \ and\ } N_M=\tfrac{\gamma_M}{\rho_0 c_0}>0 {\rm \  such\ that\ }\\[2mm]
\qquad N_m\leq N^*(f)\leq N_M,\quad \forall f\in C^0(\mathbb{R}_0^+)\cap L^{\infty}(\mathbb{R}_0^+). \\[2mm]
{\rm (b)\ There\ exists\  } \wtN=\tfrac{\widetilde{\gamma} (T^*-T_m)}{\rho_0 c_0}>0 {\rm \  such\ that\ }\\[2mm]
\qquad \|N^*(f_1)-N^*(f_2) \|\leq \wtN \|f_1-f_2\|,\; \forall f_1,f_2\in C^0(\mathbb{R}_0^+)\cap L^{\infty}(\mathbb{R}_0^+).
\end{array}\right. \label{Hipotesis-N*}
\end{eqnarray}
\begin{eqnarray}
&&\left\{ \begin{array}{ll} \mu^*(f) \mbox{\ is\ such that:}\\[2mm]
{\rm (a)\ There\ exists\  } \mu_m=\tfrac{\nu_m}{\sqrt{\rho_0 c_0 k_0}}>0 {\rm \ and\ } \mu_M=\tfrac{\nu_M}{\sqrt{\rho_0 c_0 k_0}}>0 {\rm \  such\ that\ }\\[2mm]
\qquad \mu_m\leq \mu^*(f)\leq \mu_M,\qquad \forall f\in C^0(\mathbb{R}_0^+)\cap L^{\infty}(\mathbb{R}_0^+). \\[2mm]
{\rm (b)\ There\ exists\  } \wtmu=\tfrac{\widetilde{\nu}(T^*-T_m)}{\sqrt{\rho_0 c_0 k_0}}>0 {\rm \  such\ that\ }\\[2mm]
\qquad \|\mu^*(f_1)-\mu^*(f_2) \|\leq \wtmu\; \|f_1-f_2\|,\quad \forall f_1,f_2\in C^0(\mathbb{R}_0^+)\cap L^{\infty}(\mathbb{R}_0^+).\\[0mm] 
\end{array}\right. \label{Hipotesis-mu*}
\end{eqnarray}

Let us present now some preliminary results that will allow us to prove the existence and uniqueness of solution to equation \eqref{PtoFijo-Temp}.

\begin{lema}\label{LemaCotasUIEPhi} For all $z\in [0,\lambda]$ the following bounds hold
\begin{align}
& \exp\left(2\frac{\mu_M}{L_M}z\right)\leq  U(f)(z)\leq \exp\left(2\dfrac{\mu_M}{L_m}z \right),\label{cota-U}\\[0.15cm]
& \exp\left( \dfrac{N_m}{L_M}\right)\leq   I(f)(z)\leq \exp\left( \dfrac{N_M}{L_m}z^2\right),\label{cota-I}\\[0.15cm]
&\exp\left(-\tfrac{N_M}{L_m}z^2 \right)\leq \tfrac{\exp\left(2\tfrac{\mu_m}{L_M}z\right)}{\exp\left(\tfrac{N_M}{L_m}z^2 \right)}\leq E(f)(z)\leq \tfrac{\exp\left(2\tfrac{\mu_M}{L_m}z\right)}{\exp\left(\tfrac{N_m}{L_M}z^2\right)}\leq  \exp\left(2\tfrac{\mu_M}{L_m}z \right),\label{cota-E} \\[0.15cm]
&\tfrac{z}{L_M}\exp\left(-\tfrac{N_M}{L_m}z^2 \right)\leq \tfrac{\sqrt{\pi}}{2} \tfrac{\sqrt{L_m}}{L_M \sqrt{N_M}} \erf\left(\sqrt{\tfrac{N_M}{L_m}}z \right)\leq \Phi(f)(z)\leq \tfrac{1}{2\mu_M} \exp\left(2\tfrac{\mu_M}{L_m}z \right).\label{cota-Phi}
\end{align}
\end{lema}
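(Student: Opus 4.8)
The plan is to establish the four estimates in the order they are listed, working outward from the innermost definitions in \eqref{Def-UI}. The only ingredients are the pointwise two-sided bounds on $L^*(f)$, $N^*(f)$ and $\mu^*(f)$ recorded in \eqref{Hipotesis-L*}, \eqref{Hipotesis-N*} and \eqref{Hipotesis-mu*}, together with monotonicity of the integral and of the exponential. No fixed-point machinery is needed here; this is purely a chain of elementary comparisons, so each bound reduces to estimating an integrand and then integrating.

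First I would bound $U(f)$. Since $\mu_m\le\mu^*(f)\le\mu_M$ and $L_m\le L^*(f)\le L_M$, for every $\sigma\ge 0$ the integrand of the defining integral satisfies $\frac{\mu_m}{L_M}\le\frac{\mu^*(f)(\sigma)}{L^*(f)(\sigma)}\le\frac{\mu_M}{L_m}$; integrating over $[0,z]$, multiplying by $2$ and exponentiating yields \eqref{cota-U} (the lower exponent being governed by $\mu_m$, consistent with the numerator appearing in the middle bound of \eqref{cota-E}). For $I(f)$ the argument is identical, the only change being the extra factor $\sigma\ge 0$, so that $\int_0^z\sigma\,\dd\sigma=z^2/2$ is what produces the quadratic exponents $\tfrac{N_m}{L_M}z^2$ and $\tfrac{N_M}{L_m}z^2$ in \eqref{cota-I}.

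Next, because $E(f)=U(f)/I(f)$ by \eqref{Def-E}, the inner estimates of \eqref{cota-E} follow by dividing the bounds just obtained: the lower bound on $E(f)$ combines the \emph{lower} bound on $U(f)$ with the \emph{upper} bound on $I(f)$ (the inequality reversing because $I$ sits in the denominator), and symmetrically for the upper bound. The two coarser outer estimates in \eqref{cota-E} then come from discarding the favourable Gaussian factor, using $\exp\left(2\tfrac{\mu_m}{L_M}z\right)\ge 1$ and $\exp\left(\tfrac{N_m}{L_M}z^2\right)\ge 1$ for $z\ge 0$.

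Finally, for $\Phi(f)$ from \eqref{Def-Phi} I would insert the bound on $E(f)$ and $L_m\le L^*(f)\le L_M$ into the integral. The upper bound is immediate: with $E(f)(\zeta)\le\exp\left(2\tfrac{\mu_M}{L_m}\zeta\right)$ and $1/L^*(f)\le 1/L_m$, the elementary integral $\int_0^z\exp\left(2\tfrac{\mu_M}{L_m}\zeta\right)\dd\zeta=\tfrac{L_m}{2\mu_M}\bigl(\exp\left(2\tfrac{\mu_M}{L_m}z\right)-1\bigr)$ and dropping the $-1$ gives the right-hand side of \eqref{cota-Phi}. For the lower bound I would use $E(f)(\zeta)\ge\exp\left(-\tfrac{N_M}{L_m}\zeta^2\right)$ and $1/L^*(f)\ge 1/L_M$, reducing everything to the Gaussian integral $\int_0^z\exp\left(-a\zeta^2\right)\dd\zeta$ with $a=\tfrac{N_M}{L_m}$; the key computational step is recognizing this as $\tfrac{\sqrt\pi}{2\sqrt a}\erf(\sqrt a\,z)$, which yields the middle lower bound, while bounding the decreasing integrand below by its endpoint value $\exp(-az^2)$ gives $\int_0^z\exp(-a\zeta^2)\dd\zeta\ge z\exp(-az^2)$ and hence the crudest left-most bound $\tfrac{z}{L_M}\exp\left(-\tfrac{N_M}{L_m}z^2\right)$. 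I expect no genuine difficulty, only bookkeeping: one must track carefully which extreme value ($m$ or $M$) to use once a quantity moves into a denominator, so that the symmetric estimates for $U$ and $I$ recombine asymmetrically in $E$, and one must correctly identify and evaluate the Gaussian integral producing the error-function bound for $\Phi$.
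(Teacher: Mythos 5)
Your proposal is correct and follows exactly the route the paper intends: the paper's own proof is just the one-line remark that the bounds ``follow immediately from the definitions of $U,I,E,\Phi$ using assumptions \eqref{Hipotesis-L*}--\eqref{Hipotesis-mu*}'', and your chain of integrand estimates is the natural way to fill that in. You were also right to flag the discrepancies in the stated lemma: your derivation yields $\exp\left(2\tfrac{\mu_m}{L_M}z\right)\leq U(f)(z)$ and $\exp\left(\tfrac{N_m}{L_M}z^2\right)\leq I(f)(z)$, which are the versions actually consistent with (and used in) \eqref{cota-E}, so the exponents $2\tfrac{\mu_M}{L_M}z$ and $\tfrac{N_m}{L_M}$ printed in \eqref{cota-U}--\eqref{cota-I} are typos rather than gaps in your argument.
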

\begin{proof}
The proof follows immediately from the definitions of $U,I,E,\Phi$ using assumptions \eqref{Hipotesis-L*}-\eqref{Hipotesis-mu*}.
\end{proof}
\medskip

\begin{lema} \label{LemaLipschitzUIEPhi}Given $\lambda>0$, for all $z \in[0,\lambda]$ and  $f_1,f_2\in C^0[0,\lambda]$  the following inequalities hold
\begin{align}
|U(f_1)(z)-U(f_2)(z)|&\leq  D_1(z) \|f_1-f_2 \|,\label{Lipschitz-U}\\[0.15cm]
|I(f_1)(z)-I(f_2)(z)|&\leq  D_2(z) \|f_1-f_2 \|,\label{Lipschitz-I}\\[0.15cm]
|E(f_1)(z)-E(f_2)(z)|&\leq  D_3(z) \|f_1-f_2 \|,\label{Lipschitz-E}\\[0.15cm]
|\Phi(f_1)(z)-\Phi(f_2)(z)|&\leq  \lambda\; D_4(\lambda) \|f_1-f_2 \|,\label{Lipschitz-Phi}
\end{align}
where
\begin{eqnarray}\label{D1-4}
\begin{array}{lll}
&D_1(z)=\dfrac{2\exp\left(\tfrac{2\mu_M}{L_m} \right)}{L_m^2}z \left( \mu_M \wtL+L_m \wtmu \right), \qquad \\[0.35cm]
&  D_2(z)=\dfrac{\exp\left(\tfrac{N_M}{L_m}z^2 \right)}{L_m^2}z^2 \left( N_M \wtL+L_m \wtN \right),  \\[0.35cm]
%&D_3(z)= \dfrac{\exp\left(2\tfrac{\mu_M}{L_m}z+\tfrac{N_M}{L_m}z^2 \right)}{L_m^2} \Big(  z^2 (N_M \wtL+L_M \wtN)+2z(\mu_M\wtL+L_m\wtmu)\Big),\\[0.35cm]
&D_3(z)= \exp\left( \dfrac{N_M}{L_m}z^2\right) D_1(z)+ \exp\left(2\dfrac{\mu_M}{L_m}z \right) D_2(z), \\[0.35cm]
&D_4(\lambda)= \dfrac{1}{L_m^2} \Big(\wtL \exp\left(2\lambda \tfrac{\mu_M}{L_m} \right) + L_m D_3(\lambda)\Big).
\end{array}
\end{eqnarray}
\end{lema}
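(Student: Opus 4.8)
The plan is to establish the four estimates in the order $U,I \to E \to \Phi$, exploiting the fact that each of these functionals is obtained from the bounded and Lipschitz maps $L^*,N^*,\mu^*$ of \eqref{Hipotesis-L*}--\eqref{Hipotesis-mu*} by the elementary operations of integration, quotient, product and exponentiation. Two tools recur throughout. First, the elementary inequality $|e^a-e^b|\le e^{\max(a,b)}|a-b|$ (immediate from the mean value theorem), which converts a Lipschitz bound on an exponent into one on the exponential, provided the exponent is controlled from above. Second, the standard ``add and subtract'' device that linearises a quotient or a product of two factors into a sum of two terms, each carrying the increment of only one factor; the spare factor is then absorbed by the uniform bounds already recorded in Lemma~\ref{LemaCotasUIEPhi}.

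For \eqref{Lipschitz-U} I would write $U(f)(z)=\exp(g_f(z))$ with $g_f(z)=2\int_0^z \mu^*(f)(\sigma)/L^*(f)(\sigma)\,\dd\sigma$. The integrand difference is first estimated by
\[
\left|\frac{\mu^*(f_1)}{L^*(f_1)}-\frac{\mu^*(f_2)}{L^*(f_2)}\right|
=\frac{\bigl|L^*(f_2)\,(\mu^*(f_1)-\mu^*(f_2))+\mu^*(f_2)\,(L^*(f_2)-L^*(f_1))\bigr|}{L^*(f_1)L^*(f_2)},
\]
which, using $L_m\le L^*\le L_M$, $\mu^*\le\mu_M$ and the Lipschitz constants $\wtL,\wtmu$, is bounded by a constant multiple of $\|f_1-f_2\|$. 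Integrating over $[0,z]$ controls $|g_{f_1}(z)-g_{f_2}(z)|$ by a constant times $z\,\|f_1-f_2\|$; since $0\le g_{f_i}(z)$ and $U(f_i)(z)=e^{g_{f_i}(z)}\le \exp(2\mu_M z/L_m)$ by \eqref{cota-U}, the exponential inequality yields \eqref{Lipschitz-U}. The estimate \eqref{Lipschitz-I} follows verbatim with $N^*$ in place of $\mu^*$ and the weight $\sigma$ kept inside the integral, so that integration now produces the factor $z^2$ and the prefactor $\exp(N_M z^2/L_m)$ coming from \eqref{cota-I}.

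For \eqref{Lipschitz-E} I would use $E=U/I$ together with the identity
\[
E(f_1)-E(f_2)=\frac{I(f_2)\,\bigl(U(f_1)-U(f_2)\bigr)+U(f_2)\,\bigl(I(f_2)-I(f_1)\bigr)}{I(f_1)\,I(f_2)} .
\]
Feeding in \eqref{Lipschitz-U} and \eqref{Lipschitz-I}, the uniform upper bounds on $U$ and $I$ from Lemma~\ref{LemaCotasUIEPhi}, and the trivial lower bound $I(f)\ge 1$ (evident since its exponent is nonnegative) to dispose of the denominator, gives \eqref{Lipschitz-E} with $D_3$ a sum of the two contributions $\exp(N_M z^2/L_m)D_1(z)$ and $\exp(2\mu_M z/L_m)D_2(z)$. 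Finally, for \eqref{Lipschitz-Phi} I would subtract the two integrals defining $\Phi$ and bound the difference of integrands $\bigl|E(f_1)/L^*(f_1)-E(f_2)/L^*(f_2)\bigr|$ by the same add-and-subtract trick, now invoking \eqref{Lipschitz-E} and the upper bound on $E$ from \eqref{cota-E}; integrating over $[0,\xi]$ with $\xi\le\lambda$ extracts the explicit factor $\lambda$ and produces $D_4(\lambda)$.

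The computations are routine and no single step presents a genuine obstacle. The only thing that demands care is the bookkeeping: the estimates must be proved in the stated order (as $E$ uses $U,I$ and $\Phi$ uses $E$), and at each quotient or product one must select the correct uniform bound---an upper bound in a numerator, a lower bound ($L_m$, or $I\ge 1$) in a denominator---so that the compounding exponential prefactors match those recorded in \eqref{D1-4}.
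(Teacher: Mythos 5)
Your proposal is correct and follows essentially the same route as the paper's proof: the mean value theorem for the exponential (with the exponent bounded via the uniform bounds of Lemma \ref{LemaCotasUIEPhi}), add-and-subtract for the quotients $\mu^*/L^*$, $\sigma N^*/L^*$, $U/I$ and $E/L^*$, and the ordering $U,I \to E \to \Phi$. The only cosmetic difference is in the $E$ step, where you attach the spare factors $I(f_2), U(f_2)$ to the increments rather than the paper's $U(f_1), I(f_1)$, which yields the same $D_3$.
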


\begin{proof}
Applying the mean value theorem and taking into account  assumptions \eqref{Hipotesis-L*}-\eqref{Hipotesis-mu*} we obtain that
\begin{align*}
&|U(f_1)(z)-U(f_2)(z)|\leq 2\exp\left(2\frac{\mu_M}{L_m}z \right) \bint_0^z \left|\dfrac{\mu^*(f_1)(\sigma)}{L^*(f_1)(\sigma)}-\dfrac{\mu^*(f_2)(\sigma)}{L^*(f_2)(\sigma)} \right| \dd \sigma \\[0.15cm]
&\leq 2\exp\left(2\frac{\mu_M}{L_m}z \right) \left\lbrace \bint_0^z  \left|  \tfrac{\mu^*(f_1)(\sigma)}{L^*(f_1)(\sigma) L^*(f_2)(\sigma)}\right|  \left| L^*(f_2)(\sigma)-L^*(f_1)(\sigma)\right| \dd \sigma \right.\\[0.15cm]
&+\left.\bint_0^z  \left|  \tfrac{1}{L^*(f_2)(\sigma)}\right|  \left| \mu^*(f_2)(\sigma)-\mu^*(f_1)(\sigma)\right| \dd \sigma\right\rbrace\\[0.15cm]
&\leq 2\exp\left(2\frac{\mu_M}{L_m}z \right)  z\left(\dfrac{\mu_M \wtL }{L_m^2}\| f_1-f_2\|+\dfrac{\wtmu}{L_m} \| f_1-f_2\|  \right)\leq D_1(z) \| f_1-f_2\|.
\end{align*}
In a similar way, by using the mean value theorem again  we get
\begin{align*}
&|I(f_1)(z)-I(f_2)(z)|\leq 2\exp\left(\frac{N_M}{L_m}z^2 \right) \bint_0^z  \sigma \left| \dfrac{N^*(f_1)(\sigma)}{L^*(f_1)(\sigma)}-\dfrac{N^*(f_2)(\sigma)}{L^*(f_2)(\sigma)} \right| \dd \sigma \\[0.15cm]
&\leq 2\exp\left(\frac{N_M}{L_m}z^2 \right) \left\lbrace \bint_0^z  \sigma  \left|  \tfrac{N^*(f_1)(\sigma)}{L^*(f_1)(\sigma) L^*(f_2)(\sigma)}\right|  \left| L^*(f_2)(\sigma)-L^*(f_1)(\sigma)\right| \dd \sigma \right.\\[0.15cm]
&+\left.\bint_0^z \sigma  \left|  \tfrac{1}{L^*(f_2)(\sigma) }\right|  \left| N^*(f_2)(\sigma)-N^*(f_1)(\sigma)\right| \dd \sigma\right\rbrace\\[0.15cm]
&\leq 2\exp\left(\frac{N_M}{L_m}z^2 \right)  \dfrac{z^2}{2} \left( \dfrac{N_M \wtL}{L_m^2} \|f_1-f_2\|+ \dfrac{\wtN}{L_m}\|f_1-f_2\|\right)\leq D_2(z) \|f_1-f_2\|.
\end{align*}

Taking into account that $E$  defined by \eqref{Def-E}  depends on $U$ and $I$ we  use \eqref{cota-U}, \eqref{cota-I} and the properties \eqref{Lipschitz-U} and \eqref{Lipschitz-I} that we have just proved   in order to get  the inequality \eqref{Lipschitz-E}.
\begin{align*}
&|E(f_1)(z)-E(f_2)(z)|\leq \tfrac{1}{|I(f_1)(z)||I(f_2)(z)|} \left| U(f_1)(z)I(f_2)(z)-I(f_1)(z)U(f_2)(z) \right|\\[0.15cm]
&\leq \exp\left(-2\tfrac{N_m}{L_M}z^2 \right) \Big\lbrace |U(f_1)(z)||I(f_2)(z)-I(f_1)(z)|\\[0.15cm]
&\qquad +|I(f_1)(z)||U(f_1)(z)-U(f_2)(z)| \Big\rbrace\\[0.15cm]
&\leq  \exp\left(2\tfrac{\mu_M}{L_m}z \right) D_2(z) \|f_1-f_2\|\\[0.15cm]
&\qquad+\exp\left( \dfrac{N_M}{L_m}z^2\right) D_1(z) \|f_1-f_2\|\leq D_3(z) \| f_1-f_2\|.
\end{align*}
Finally, by virtue of the definition \eqref{Def-Phi} of $\Phi$ and inequalities \eqref{cota-E} and \eqref{Lipschitz-E} we obtain 
\begin{align*}
&|\Phi(f_1)(z)-\Phi(f_2)(z)|\leq \bint_0^{z} \left\lbrace \dfrac{|E(f_1)(\eta)|}{|L^*(f_1)(\eta)L^*(f_2)(\eta)|} |L^*(f_2)(\eta)-L^*(f_1)(\eta)|\right.\\[0.15cm]
&+\left. \dfrac{1}{|L^*(f_2)(\eta)|} | E(f_1)(\eta)-E(f_2)(\eta) | \right\rbrace \dd \eta\\[0.15cm]
&\leq \bint_0^{z} \left\lbrace\tfrac{\exp\left(2\tfrac{\mu_M}{L_m}\eta \right)}{L_m^2} \wtL \| f_1-f_2\|+\tfrac{1}{L_m} D_3(\eta)\|f_1-f_2 \|\right\rbrace \dd \eta\\[0.15cm]
&\leq \bint_0^{\lambda} \left\lbrace\tfrac{\exp\left(2\tfrac{\mu_M}{L_m}\eta \right)}{L_m^2} \wtL \| f_1-f_2\|+\tfrac{1}{L_m} D_3(\eta)\|f_1-f_2 \|\right\rbrace \dd z\\[0.15cm]
&\leq \lambda \;  \left( \tfrac{\exp\left(2\tfrac{\mu_M}{L_m}\lambda\right)}{L_m^2} \wtL+ \tfrac{D_3(\lambda)}{L_m} \right)      \| f_1-f_2\|=\lambda\;D_4(\lambda)  \|f_1-f_2\|.
\end{align*}
\end{proof}
 We are able now to state the following theorem
 
 \begin{teo} \label{TeoExistenciaf-Temp} Suppose that \eqref{Hipotesis-L*}-\eqref{Hipotesis-mu*} hold and
 \be
 \dfrac{2L_M \wtL}{L_m^2}<1. \label{Hipotesis-ExtraContraccion}
 \ee 
 If $0<\lambda<\overline{\lambda}$ where $\overline{\lambda}>0$ is defined as the unique solution to {\color{black}$\mathcal{E}(z)=1$} with
 {\color{black}\be
 \mathcal{E}(z):=2  D_4(z) L_M \exp\left(\tfrac{N_M}{L_m}z^2 \right), \label{Epsilon-Temp}
 \ee}
 then there exists a unique solution $f\in C^0[0,\lambda]$ for the  integral equation \eqref{Volterra-Temp}, i.e. \eqref{PtoFijo-Temp}.
% {\color{black} El $C_2$ en nuestras cuentas es $D_4$ aca!.}

 \end{teo}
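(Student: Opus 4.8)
The plan is to apply the Banach fixed point theorem to the operator $\mathcal{H}$ from \eqref{Def-H} on the Banach space $(C^0[0,\lambda],\|\cdot\|)$, showing it is a contraction whenever $0<\lambda<\overline{\lambda}$. First I would check that $\mathcal{H}$ is well defined and maps $C^0[0,\lambda]$ into itself: for each $f\in C^0[0,\lambda]$ the map $\xi\mapsto\Phi(f)(\xi)$ is continuous, and the lower bound in \eqref{cota-Phi} gives $\Phi(f)(\lambda)>0$, so $\mathcal{H}(f)$ is a well-defined continuous function on $[0,\lambda]$. The entire problem then reduces to producing a Lipschitz constant for $\mathcal{H}$ that is strictly smaller than $1$.

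For the contraction estimate, the key algebraic manipulation is to write, for $f_1,f_2\in C^0[0,\lambda]$ and $\xi\in[0,\lambda]$,
\[
\mathcal{H}(f_1)(\xi)-\mathcal{H}(f_2)(\xi)=\frac{\Phi(f_2)(\lambda)\big[\Phi(f_1)(\xi)-\Phi(f_2)(\xi)\big]+\Phi(f_2)(\xi)\big[\Phi(f_2)(\lambda)-\Phi(f_1)(\lambda)\big]}{\Phi(f_1)(\lambda)\,\Phi(f_2)(\lambda)},
\]
obtained by reducing to a common denominator and adding and subtracting $\Phi(f_2)(\xi)\Phi(f_2)(\lambda)$ in the numerator. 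I would then bound both bracketed differences by the Lipschitz estimate \eqref{Lipschitz-Phi}, use the monotonicity $\Phi(f_2)(\xi)\le\Phi(f_2)(\lambda)$ (immediate from \eqref{Def-Phi}, since the integrand is positive) to cancel the factor $\Phi(f_2)(\lambda)$, and finally apply the lower bound $\Phi(f_1)(\lambda)\ge\frac{\lambda}{L_M}\exp\!\big(-\frac{N_M}{L_m}\lambda^2\big)$ from \eqref{cota-Phi}. This produces exactly
\[
\|\mathcal{H}(f_1)-\mathcal{H}(f_2)\|\le 2D_4(\lambda)L_M\exp\!\Big(\tfrac{N_M}{L_m}\lambda^2\Big)\|f_1-f_2\|=\mathcal{E}(\lambda)\,\|f_1-f_2\|,
\]
with $\mathcal{E}$ as in \eqref{Epsilon-Temp}.

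It then remains to justify the threshold $\overline{\lambda}$ and verify $\mathcal{E}(\lambda)<1$ on $(0,\overline{\lambda})$. From \eqref{D1-4} one reads off $D_1(z),D_2(z)\to0$ as $z\to0^+$, hence $D_3(z)\to0$ and $D_4(0)=\wtL/L_m^2$, so that $\mathcal{E}(0^+)=2L_M\wtL/L_m^2$, which is strictly below $1$ precisely by the extra hypothesis \eqref{Hipotesis-ExtraContraccion}. Since $\mathcal{E}$ is continuous and strictly increasing in $z$ (each of $D_1,D_2,D_3,D_4$ and the exponential factor is nonnegative and increasing) with $\mathcal{E}(z)\to+\infty$ as $z\to+\infty$, the equation $\mathcal{E}(z)=1$ has a unique positive root $\overline{\lambda}$ and $\mathcal{E}(\lambda)<1$ for every $0<\lambda<\overline{\lambda}$. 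For such $\lambda$ the operator $\mathcal{H}$ is a contraction, and the Banach fixed point theorem yields the unique $f\in C^0[0,\lambda]$ with $\mathcal{H}(f)=f$, i.e. the unique solution of \eqref{Volterra-Temp}, which is \eqref{PtoFijo-Temp}.

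I expect the main obstacle to be the contraction estimate itself — arranging the quotient difference so that $\Phi(f_2)(\lambda)$ cancels and the lower bound from \eqref{cota-Phi} reproduces precisely the constant $\mathcal{E}(\lambda)$; once this is done the role of \eqref{Hipotesis-ExtraContraccion} becomes transparent, being exactly the requirement $\mathcal{E}(0^+)<1$ that guarantees a genuine contraction for small $\lambda$.
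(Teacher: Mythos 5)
Your proposal is correct and follows essentially the same route as the paper: a Banach fixed point argument where the quotient difference $\mathcal{H}(f_1)-\mathcal{H}(f_2)$ is split into two terms, bounded via the Lipschitz estimate \eqref{Lipschitz-Phi} together with the lower bound and monotonicity of $\Phi$ from \eqref{cota-Phi}, producing exactly the contraction constant $\mathcal{E}(\lambda)$, after which the monotonicity of $\mathcal{E}$ and the hypothesis \eqref{Hipotesis-ExtraContraccion} (i.e. $\mathcal{E}(0)<1$) yield the threshold $\overline{\lambda}$. The only cosmetic difference is which of the two factors ($\Phi(f_1)$ or $\Phi(f_2)$) you keep in the ratio bounded by $1$; the estimates and conclusion are identical to the paper's.
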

 
 \begin{proof}
As equation \eqref{Volterra-Temp} is equivalent to \eqref{PtoFijo-Temp}, we will show that $\mathcal{H}$ given by \eqref{Def-H} is a contracting self-map of $C^0[0,\lambda]$.

 On one hand, notice that for each $\lambda>0$, taking into account the definition of $\mathcal{H}$ and the hypothesis on $L^*, N^*$ and $\mu^*$ we can easily check that $\mathcal{H}$ maps $C^0[0,\lambda]$ onto itself. %{\color{black} se agrega algo mas?}
 
 On the other hand, let $f_1,f_2\in C^0[0,\lambda]$, from \eqref{cota-Phi} and \eqref{Lipschitz-Phi}, for each $0<\xi<\lambda$  we get
 \begin{align*}
 &|\mathcal{H}(f_1)(\xi)-\mathcal{H}(f_2)(\xi)|\leq \dfrac{|\Phi(f_1)(\xi)|}{|\Phi(f_1)(\lambda)| |\Phi(f_2)(\lambda)|} |\Phi(f_2)(\lambda)-\Phi(f_1)(\lambda)|\\[0.15cm]
 &+ \dfrac{1}{|\Phi(f_2)(\lambda)|} |\Phi(f_1)(\xi)-\Phi(f_2)(\xi)|\\[0.15cm]
 &\leq \dfrac{1}{|\Phi(f_2)(\lambda)|} \Big(|\Phi(f_2)(\lambda)-\Phi(f_1)(\lambda)|+|\Phi(f_2)(\xi)-\Phi(f_1)(\xi)|\Big)\\[0.15cm]
 &\leq \dfrac{L_M}{\lambda} \exp\left(\tfrac{N_M}{L_m}\lambda^2 \right) 2 \lambda D_4(\lambda)\| f_1-f_2\|.
 \end{align*}
 Therefore we obtain
$$\| \mathcal{H}(f_1)-\mathcal{H}(f_2)\| \leq \mathcal{E}(\lambda)\|f_1-f_2\|,$$
with $\mathcal{E}$ defined by \eqref{Epsilon-Temp}. Notice that $\mathcal{E}$ satisfies 
$$\mathcal{E}(0)=\dfrac{2L_M\wtL}{L_m^2},\qquad\qquad \mathcal{E}(+\infty)=+\infty,\qquad \qquad {\color{black}\mathcal{E}'(z)>0,\quad \forall z>0}.$$
Under the assumption \eqref{Hipotesis-ExtraContraccion} we deduce that there exists a unique $\overline{\lambda}>0$ such that $\mathcal{E}(\overline{\lambda})=1$. Moreover but most significantly we get
{\color{black}$$\mathcal{E}(z)<1,\qquad \forall \; 0<z<\overline{\lambda}\qquad\qquad \text{and}\qquad \mathcal{E}(z)>1,\qquad \forall \; z>\overline{\lambda}.$$}
In conclusion, if $\lambda$ is such that $0<\lambda<\overline{\lambda}$ then $\mathcal{E}(\lambda)<1$, and so $\mathcal{H}$ becomes a contraction mapping. By the fixed point Banach theorem we can say that there exists a unique solution $f\in C^0[0,\lambda]$ to the integral equation \eqref{PtoFijo-Temp}, i.e. to the integral equation \eqref{Volterra-Temp}.
 \end{proof}
 
 \begin{obs} The solution $f$ of \eqref{Volterra-Temp} depends implicitly on the positive number $\lambda$. This means that $f(\xi)=f_\lambda(\xi)=f(\xi,\lambda)$,\;$\forall\; 0<\xi<\lambda$.
 \end{obs}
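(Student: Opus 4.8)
The plan is to trace precisely where the parameter $\lambda$ enters the fixed-point construction of Theorem~\ref{TeoExistenciaf-Temp} and to record that, because it enters in two distinct places, the resulting fixed point is necessarily a function of $\lambda$. First I would observe that the very space on which the contraction argument is carried out, the Banach space $(C^0[0,\lambda],\Vert\cdot\Vert)$, depends on $\lambda$: altering $\lambda$ alters the interval $[0,\lambda]$ on which the functions are defined. Second, and more substantively, the operator $\mathcal{H}$ of \eqref{Def-H}, given by $\mathcal{H}(f)(\xi)=\Phi(f)(\xi)/\Phi(f)(\lambda)$, contains $\lambda$ explicitly through the normalizing denominator $\Phi(f)(\lambda)$. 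Thus what Theorem~\ref{TeoExistenciaf-Temp} really delivers is, for each admissible $\lambda\in(0,\overline{\lambda})$, a distinct contraction $\mathcal{H}=\mathcal{H}_\lambda$ acting on a distinct domain $C^0[0,\lambda]$.

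Next I would simply invoke the conclusion of Theorem~\ref{TeoExistenciaf-Temp}: for every $\lambda$ in the admissible range the Banach fixed point theorem produces one and only one $f\in C^0[0,\lambda]$ with $\mathcal{H}_\lambda(f)=f$, equivalently satisfying the integral equation \eqref{Volterra-Temp}. Since this fixed point is completely determined by the operator $\mathcal{H}_\lambda$ and its domain, both of which vary with $\lambda$, the solution is a function of $\lambda$ as well as of the variable $\xi$. Making the dependence explicit in the notation, I would write $f(\xi)=f_\lambda(\xi)=f(\xi,\lambda)$ for $0<\xi<\lambda$, which is exactly the content of the remark. The well-definedness of the symbol $f(\xi,\lambda)$ is guaranteed by the uniqueness clause of the theorem, so no ambiguity arises.

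There is no genuine obstacle here, since the statement is a bookkeeping observation rather than a theorem with a hidden difficulty. The only point worth a word is to resist over-claiming: the remark asserts the existence of the dependence and fixes the notation, but it does not assert anything about the regularity of that dependence. I would therefore be careful not to smuggle in continuity or differentiability of the map $\lambda\mapsto f_\lambda$; such properties, which will be the real work needed later when the scalar condition \eqref{CondLambda-Temp} is treated as an equation for $\lambda$ in the double fixed-point analysis, lie beyond what this remark states and should be established separately.
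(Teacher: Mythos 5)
Your proposal is correct and matches the paper's intent: the remark carries no separate proof in the paper, being exactly the bookkeeping observation that the fixed point delivered by Theorem \ref{TeoExistenciaf-Temp} depends on $\lambda$ through both the domain $C^0[0,\lambda]$ and the normalizing term $\Phi(f)(\lambda)$ in the operator $\mathcal{H}$, with uniqueness making the notation $f_\lambda$ well defined. Your closing caution about not assuming regularity of $\lambda\mapsto f_\lambda$ is also consistent with how the paper proceeds, since the subsequent analysis of \eqref{CondLambda-Temp} only uses continuity of $\mathcal{V}$ together with the bounds of Lemma \ref{LemaAcotaV-Temp}.
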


 So far, we have proved, for a fixed $0<\lambda<\overline{\lambda}$, the existence of a unique solution to equation \eqref{Volterra-Temp}, which will be referred as $f_\lambda(\xi)$ in view of the dependence outlined in the prior remark.

It remains to analyse the existence of a solution $(f_{\widetilde{\lambda}},\widetilde{\lambda})$ to the system \eqref{Volterra-Temp}-\eqref{CondLambda-Temp}. 
So we will focus now on  condition \eqref{CondLambda-Temp}.

Let us define the function $\mathcal{V}(\lambda)=\mathcal{V}(f_\lambda,\lambda)$ as
\be
\mathcal{V}(\lambda):= \dfrac{\Ste}{2} \dfrac{E(f_\lambda)(\lambda)}{\Phi(f_{\lambda})(\lambda)},\qquad 0< \lambda<\overline{\lambda}.\label{Def-VLambda-Temp}
\ee
Then equation \eqref{CondLambda-Temp} is equivalent to
\be
\mathcal{V}(\lambda)=\lambda,\qquad 0<\lambda<\overline{\lambda}. \label{PtoFijo-Lambda-Temp}
\ee
The study of the equation \eqref{PtoFijo-Lambda-Temp} requires
 the following results:

\begin{lema} \label{LemaAcotaV-Temp}Assume \eqref{Hipotesis-L*}-\eqref{Hipotesis-mu*} and \eqref{Hipotesis-ExtraContraccion}.
%\be
%\dfrac{1}{\sqrt{\pi}}\sqrt{\dfrac{N_M}{L_m}}> \dfrac{\mu_M}{L_m}.\label{Hipotesis-ExtraPtoFijoLambda}
%\ee
Then for all $\lambda\in (0,\overline{\lambda})$ we have that
\be
\mathcal{V}_1(\lambda)<\mathcal{V}(\lambda)<\mathcal{V}_2(\lambda) \label{V1V2-Temp}
\ee
where $\mathcal{V}_1$ and $\mathcal{V}_2$ are functions defined by
\begin{eqnarray}
\begin{array}{lll}
&\mathcal{V}_1(\lambda)=\mathrm{Ste}\;\mu_M \exp\left(-2\lambda \tfrac{\mu_M}{L_m}-2\lambda^2 \tfrac{N_M}{L_m} \right),\qquad\; \quad &\lambda>0\\[0.15cm]
&\mathcal{V}_2(\lambda)=\dfrac{\mathrm{Ste}}{\sqrt{\pi}}\dfrac{\sqrt{N_M}}{\sqrt{L_m}}L_M \dfrac{\exp\left(2\lambda \tfrac{\mu_M}{L_m}-\lambda^2 \tfrac{N_m}{L_M} \right)}{\erf\left(\sqrt{\tfrac{N_M}{L_m}}\lambda \right)},\qquad &\lambda>0,
\end{array}
\end{eqnarray}
that satisfy the following properties
\begin{eqnarray}
\begin{array}{llll}
&\mathcal{V}_1(0)=\mathrm{Ste}\;\mu_M>0\quad  &\mathcal{V}_1(+\infty)=0,\quad  & \mathcal{V}_1'(\lambda)<0,\; \forall \lambda> 0,\\[0.15cm]
&\mathcal{V}_2(0)=+\infty, &\mathcal{V}_2(+\infty)=0. %&V_2'(\lambda)<0,\qquad \forall \lambda> 0.
\end{array}
\end{eqnarray}

\end{lema}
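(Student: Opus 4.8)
The plan is to feed the two-sided estimates on $E$ and $\Phi$ from Lemma~\ref{LemaCotasUIEPhi} straight into the definition \eqref{Def-VLambda-Temp} of $\mathcal{V}$. Those estimates are valid for any admissible argument, hence in particular for the fixed-point solution $f_\lambda$ delivered by Theorem~\ref{TeoExistenciaf-Temp}, evaluated at $z=\lambda$. Since $\mathcal{V}(\lambda)=\tfrac{\Ste}{2}\,E(f_\lambda)(\lambda)/\Phi(f_\lambda)(\lambda)$ with $\Phi(f_\lambda)(\lambda)>0$, a lower bound for $\mathcal{V}$ follows from a lower bound on $E$ paired with an upper bound on $\Phi$, and an upper bound for $\mathcal{V}$ from the opposite pairing.

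For $\mathcal{V}_1<\mathcal{V}$ I would take the lower bound $E(f_\lambda)(\lambda)\ge\exp(-\tfrac{N_M}{L_m}\lambda^2)$ from \eqref{cota-E} and the upper bound $\Phi(f_\lambda)(\lambda)\le\tfrac{1}{2\mu_M}\exp(2\tfrac{\mu_M}{L_m}\lambda)$ from \eqref{cota-Phi}, which give
\[
\mathcal{V}(\lambda)\ \ge\ \Ste\,\mu_M\exp\!\left(-\tfrac{N_M}{L_m}\lambda^2-2\tfrac{\mu_M}{L_m}\lambda\right).
\]
This differs from $\mathcal{V}_1(\lambda)$ only in the coefficient of $\lambda^2$, and because $-\tfrac{N_M}{L_m}\lambda^2>-2\tfrac{N_M}{L_m}\lambda^2$ for every $\lambda>0$, the right-hand side is strictly larger than $\mathcal{V}_1(\lambda)$. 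Thus the strict inequality $\mathcal{V}_1(\lambda)<\mathcal{V}(\lambda)$ is secured by the deliberately loose $\lambda^2$-coefficient built into $\mathcal{V}_1$.

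For $\mathcal{V}(\lambda)<\mathcal{V}_2(\lambda)$ I would use the upper bound $E(f_\lambda)(\lambda)\le\exp(2\tfrac{\mu_M}{L_m}\lambda)/\exp(\tfrac{N_m}{L_M}\lambda^2)$ from \eqref{cota-E} together with the $\erf$-lower bound for $\Phi$ in \eqref{cota-Phi}. Forming the quotient reproduces exactly the expression defining $\mathcal{V}_2(\lambda)$, so $\mathcal{V}(\lambda)\le\mathcal{V}_2(\lambda)$. Strictness comes from the denominator: the $\erf$-lower bound for $\Phi$ is obtained after discarding the strictly positive factor $\exp(2\tfrac{\mu_m}{L_M}\eta)>1$ in the integrand, so $\Phi(f_\lambda)(\lambda)$ lies strictly above it for $\lambda>0$; since the numerator estimate is only a $\le$ and $\Phi(f_\lambda)(\lambda)>0$, the resulting quotient estimate is strict.

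Finally the asymptotics and monotonicity of $\mathcal{V}_1,\mathcal{V}_2$ follow by direct inspection. For $\mathcal{V}_1$ one reads off $\mathcal{V}_1(0)=\Ste\,\mu_M$ and $\mathcal{V}_1(+\infty)=0$, while $\mathcal{V}_1'(\lambda)=-\big(2\tfrac{\mu_M}{L_m}+4\tfrac{N_M}{L_m}\lambda\big)\mathcal{V}_1(\lambda)<0$ for all $\lambda>0$. For $\mathcal{V}_2$, as $\lambda\to0^+$ the exponential tends to $1$ while $\erf(\sqrt{N_M/L_m}\,\lambda)\to0$ linearly in $\lambda$, giving $\mathcal{V}_2(0^+)=+\infty$; as $\lambda\to+\infty$ the error function tends to $1$ and the exponent $2\tfrac{\mu_M}{L_m}\lambda-\tfrac{N_m}{L_M}\lambda^2\to-\infty$, giving $\mathcal{V}_2(+\infty)=0$. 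I expect the only delicate point to be the strictness of the two inequalities, and it rests precisely on the two observations above: the slack in the $\lambda^2$-coefficient of $\mathcal{V}_1$ and the discarded $\exp(2\tfrac{\mu_m}{L_M}\eta)$ factor in the lower bound for $\Phi$.
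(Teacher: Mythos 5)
Your proposal is correct and follows essentially the same route as the paper: the two-sided inequality is obtained by inserting the bounds \eqref{cota-E} and \eqref{cota-Phi} of Lemma \ref{LemaCotasUIEPhi} into the definition \eqref{Def-VLambda-Temp} of $\mathcal{V}$, and the properties of $\mathcal{V}_1$ and $\mathcal{V}_2$ are read off directly from their formulas. In fact you supply more detail than the paper does (which bound pairs with which, and where the strictness of the inequalities comes from), so no gaps here.
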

\begin{proof}
Inequality \eqref{V1V2-Temp} arises immediately from \eqref{cota-E} and \eqref{cota-Phi}.

The properties for $\mathcal{V}_1$ can be easily checked by its own definition.

Let us analyse the function $\mathcal{V}_2$, on one hand, when $\lambda\to 0$, we have $\erf\left(\sqrt{\tfrac{N_M}{L_m}}\lambda\right)\to 0$ and $\exp\left(2\lambda \tfrac{\mu_M}{L_m}-\lambda^2 \tfrac{N_m}{L_M} \right)\to 1$ so we obtain that $\mathcal{V}_2(0)=+\infty$.

In a similar manner, when $\lambda\to+\infty$ we have  $\erf\left(\sqrt{\tfrac{N_M}{L_m}}\lambda\right)\to 1$ and $\exp\left(2\lambda \tfrac{\mu_M}{L_m}-\lambda^2 \tfrac{N_m}{L_M} \right)\to 0$, so that $\mathcal{V}_2(+\infty)= 0$.

%Finally, notice that when computing the first derivative of $V_2$ we get
%\begin{align*}
%V_2'(\lambda)&=\dfrac{\mathrm{Ste}}{\sqrt{\pi}}\dfrac{\sqrt{N_M}}{\sqrt{L_m}}L_M \dfrac{\exp\left(2\lambda\tfrac{\mu_M}{L_m}-\lambda^2 \tfrac{N_m}{L_M} \right)}{\erf^2\left(\sqrt{\tfrac{N_M}{L_m}}\lambda \right)}\left[ \left( \tfrac{\mu_M}{L_m}-\lambda \tfrac{N_m}{L_M} \right)2\erf\left(\sqrt{\tfrac{N_M}{L_m}}\lambda \right) 
%\right.\\
%&\left.-\dfrac{2}{\sqrt{\pi}} \sqrt{\dfrac{N_M}{L_m}} \exp\left( -\tfrac{N_M}{L_m}\lambda^2\right) \right]
%\end{align*}

%\begin{align*}
%&h_2'(\lambda)=2\dfrac{N_m}{L_M}\erf\left(\sqrt{\tfrac{N_M}{L_m}}\lambda \right)+2\lambda\dfrac{N_m}{L_M} \dfrac{2}{\sqrt{\pi}} \exp\left(-\tfrac{N_M}{L_M}\lambda^2\right)  \sqrt{\tfrac{N_M}{L_m}}\\[0.15cm]
%&+\dfrac{2}{\sqrt{\pi}}\sqrt{\tfrac{N_M}{L_m}} \exp\left( -\tfrac{N_M}{L_m}\lambda^2\right)\left(-2\lambda \dfrac{N_M}{L_m}\right)\\[0.15cm]
%&=2\dfrac{N_m}{L_M}\erf\left(\sqrt{\tfrac{N_M}{L_m}}\lambda \right)+\dfrac{4}{\sqrt{\pi}}\sqrt{\tfrac{N_M}{L_m}}\lambda 
%\end{align*}
\end{proof}

\medskip

\begin{lema}\label{LemaLambda12-Temperatura} There exists a unique solution $\lambda_1$ 
to the equation  
\be
\mathcal{V}_1(\lambda)=\lambda,\quad \lambda> 0,\label{Lambda1-temp}
\ee
%given by $\lambda_1=W_1^{-1}(0)$ where $W_1$ is a decreasing function defined by $W_1(\lambda)=V_1(\lambda)-\lambda$.

and there exists at least one solution $\lambda_2>\lambda_1$ to the equation
\be
\mathcal{V}_2(\lambda)=\lambda,\quad \lambda>0.\label{Lambda2-temp}
\ee
\end{lema}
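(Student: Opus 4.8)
The plan is to treat the two equations separately; in each case I would recast the fixed-point equation as a sign-change problem for an auxiliary function and invoke Bolzano's theorem, using the boundary values and monotonicity of $\mathcal{V}_1,\mathcal{V}_2$ recorded in Lemma \ref{LemaAcotaV-Temp}.

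For \eqref{Lambda1-temp} I would set $g_1(\lambda):=\mathcal{V}_1(\lambda)-\lambda$ on $(0,\infty)$. Since $\mathcal{V}_1$ is continuous with $\mathcal{V}_1(0)=\Ste\,\mu_M>0$ and $\mathcal{V}_1(+\infty)=0$, we have $g_1(0^+)=\Ste\,\mu_M>0$ and $g_1(+\infty)=-\infty$, so continuity yields a root. Uniqueness is immediate from strict monotonicity: because $\mathcal{V}_1'(\lambda)<0$ for every $\lambda>0$, one gets $g_1'(\lambda)=\mathcal{V}_1'(\lambda)-1<-1<0$, hence $g_1$ is strictly decreasing and can vanish only once. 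This produces the unique $\lambda_1>0$.

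For \eqref{Lambda2-temp} I would set $g_2(\lambda):=\mathcal{V}_2(\lambda)-\lambda$. From $\mathcal{V}_2(0)=+\infty$ and $\mathcal{V}_2(+\infty)=0$ we obtain $g_2(0^+)=+\infty$ and $g_2(+\infty)=-\infty$, so a root certainly exists; the real content of the statement is that one root can be located to the right of $\lambda_1$. For this I would evaluate $g_2$ at $\lambda_1$: using $\mathcal{V}_1(\lambda_1)=\lambda_1$ together with the strict ordering $\mathcal{V}_1(\lambda)<\mathcal{V}_2(\lambda)$, I obtain $g_2(\lambda_1)=\mathcal{V}_2(\lambda_1)-\lambda_1>\mathcal{V}_1(\lambda_1)-\lambda_1=0$. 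Since $g_2(\lambda_1)>0$, $g_2(+\infty)=-\infty$, and $g_2$ is continuous on $[\lambda_1,\infty)$, Bolzano's theorem furnishes at least one zero $\lambda_2\in(\lambda_1,+\infty)$, which is the required solution of \eqref{Lambda2-temp}. Note that no monotonicity of $\mathcal{V}_2$ is needed, consistent with only ``at least one'' being claimed.

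The delicate point is justifying the strict inequality $\mathcal{V}_1(\lambda_1)<\mathcal{V}_2(\lambda_1)$ at the specific abscissa $\lambda_1$. Lemma \ref{LemaAcotaV-Temp} supplies $\mathcal{V}_1<\mathcal{V}<\mathcal{V}_2$ only on $(0,\overline{\lambda})$, so to quote it directly one would first have to verify $\lambda_1\in(0,\overline{\lambda})$. I would instead sidestep this by comparing the explicit formulas: forming the quotient $\mathcal{V}_2/\mathcal{V}_1$, the $\erf$ in the denominator is controlled through $\erf(x)\le\tfrac{2}{\sqrt{\pi}}x$, and after cancelling the common factor $\sqrt{N_M/L_m}$ and using $e^{4\lambda\mu_M/L_m}\ge 1+4\lambda\mu_M/L_m$ together with $L_M\ge L_m$, one finds $\mathcal{V}_2(\lambda)/\mathcal{V}_1(\lambda)\ge 2L_M/L_m\ge 2>1$ for every $\lambda>0$. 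This global bound is the main obstacle and the only genuine computation; once it is in hand, both parts reduce to routine applications of the intermediate value theorem.
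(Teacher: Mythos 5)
Your proposal is correct, and it follows the same route the paper intends: the paper's own proof of Lemma \ref{LemaLambda12-Temperatura} is a single sentence declaring the result straightforward from the properties of $\mathcal{V}_1$ and $\mathcal{V}_2$ listed in Lemma \ref{LemaAcotaV-Temp}, i.e.\ precisely the intermediate-value and monotonicity argument you write out. What you add, and what the paper silently skips, is the justification of the ordering $\lambda_2>\lambda_1$: this requires $\mathcal{V}_2(\lambda_1)>\mathcal{V}_1(\lambda_1)=\lambda_1$, and, as you correctly observe, Lemma \ref{LemaAcotaV-Temp} asserts the sandwich $\mathcal{V}_1<\mathcal{V}<\mathcal{V}_2$ only on $(0,\overline{\lambda})$ (since $\mathcal{V}$ is defined only where the fixed point $f_\lambda$ exists), while nothing guarantees a priori that $\lambda_1<\overline{\lambda}$. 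Your global estimate $\mathcal{V}_2(\lambda)/\mathcal{V}_1(\lambda)\ge 2L_M/L_m\ge 2$ for all $\lambda>0$, obtained from $\erf(x)\le\tfrac{2}{\sqrt{\pi}}x$, $e^{x}\ge 1+x$ and the sign of the quadratic exponent (note this uses $N_M\ge N_m$ as well as $L_M\ge L_m$, so that $2N_M/L_m\ge N_m/L_M$), is a correct computation and settles the point. As a remark, the same conclusion can be reached without computation: the bounds \eqref{cota-E} and \eqref{cota-Phi} hold for an arbitrary $f\in C^0[0,\lambda]$ and every $\lambda>0$, so sandwiching the quantity $\tfrac{\Ste}{2}\,E(f)(\lambda)/\Phi(f)(\lambda)$ for any fixed $f$ (say $f\equiv 0$) yields $\mathcal{V}_1(\lambda)\le\mathcal{V}_2(\lambda)$ on all of $(0,+\infty)$, which is all the Bolzano step at $\lambda_1$ needs.
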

\begin{proof}
The proof is straightforward taking into account the properties of $\mathcal{V}_1$ and $\mathcal{V}_2$ presented in Lemma \ref{LemaAcotaV-Temp} which are independent of the hypothesis.
\end{proof}

\medskip

\begin{teo} \label{TeoCotaTemp} Assume that \eqref{Hipotesis-L*}-\eqref{Hipotesis-mu*} and \eqref{Hipotesis-ExtraContraccion}  hold.  Consider $\lambda_1$ and $\lambda_2$ given by \eqref{Lambda1-temp} and \eqref{Lambda2-temp}, respectively.
If $\mathcal{E}(\lambda_2)<1$ 
  then there exists at least one solution $\widetilde{\lambda}\in (\lambda_1,\lambda_2)$ to the equation \eqref{PtoFijo-Lambda-Temp}.
\end{teo}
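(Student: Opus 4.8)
The plan is to recast equation \eqref{PtoFijo-Lambda-Temp} as a root-finding problem for the auxiliary function $g(\lambda):=\mathcal{V}(\lambda)-\lambda$ on the interval $[\lambda_1,\lambda_2]$ and to locate a zero by Bolzano's theorem. First I would use the hypothesis $\mathcal{E}(\lambda_2)<1$ to guarantee that the whole construction is meaningful there. Since $\mathcal{E}$ is strictly increasing with $\mathcal{E}(\overline{\lambda})=1$ (as established in the proof of Theorem \ref{TeoExistenciaf-Temp}), the inequality $\mathcal{E}(\lambda_2)<1$ forces $\lambda_2<\overline{\lambda}$, and hence $[\lambda_1,\lambda_2]\subset(0,\overline{\lambda})$. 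By Theorem \ref{TeoExistenciaf-Temp} the unique fixed point $f_\lambda$ of \eqref{Volterra-Temp} then exists for every $\lambda$ in this interval, so $\mathcal{V}(\lambda)=\tfrac{\Ste}{2}\,E(f_\lambda)(\lambda)/\Phi(f_\lambda)(\lambda)$ is genuinely defined on $[\lambda_1,\lambda_2]$.

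Next I would pin down the signs of $g$ at the endpoints using the two-sided bound \eqref{V1V2-Temp} from Lemma \ref{LemaAcotaV-Temp} together with the definitions of $\lambda_1$ and $\lambda_2$. At the left endpoint, $\mathcal{V}(\lambda_1)>\mathcal{V}_1(\lambda_1)=\lambda_1$ by \eqref{Lambda1-temp}, whence $g(\lambda_1)>0$. At the right endpoint, $\mathcal{V}(\lambda_2)<\mathcal{V}_2(\lambda_2)=\lambda_2$ by \eqref{Lambda2-temp}, whence $g(\lambda_2)<0$. Thus $g$ changes sign on $[\lambda_1,\lambda_2]$, and a zero will follow as soon as $g$ --- equivalently $\mathcal{V}$ --- is shown to be continuous there.

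Establishing the continuity of $\mathcal{V}$ is the crux of the argument and the place where I expect the real work to lie. The difficulty is twofold: the fixed point $f_\lambda$ depends on $\lambda$ both through the operator $\mathcal{H}$ and through its domain $[0,\lambda]$, which itself moves with $\lambda$. I would prove continuous dependence of $\lambda\mapsto f_\lambda$ by a perturbation-of-fixed-points argument. On a compact subinterval of $(0,\overline{\lambda})$ the contraction constant $\mathcal{E}(\cdot)$ stays bounded away from $1$, and the maps $U,I,E,\Phi$ of \eqref{Def-Phi}--\eqref{Def-UI} depend continuously --- indeed Lipschitz-continuously in their argument via Lemma \ref{LemaLipschitzUIEPhi}, and continuously in the integration limit --- so that $\mathcal{H}$ varies continuously with $\lambda$ after restricting the competing fixed points to the smaller of the two moving domains. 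Combining this uniform contraction with the continuous dependence of $\mathcal{H}$ on $\lambda$ yields $\|f_\lambda-f_{\lambda'}\|\to 0$ as $\lambda'\to\lambda$. Since the denominator $\Phi(f_\lambda)(\lambda)$ is bounded below away from zero by the lower estimate in \eqref{cota-Phi}, the composite map $\lambda\mapsto E(f_\lambda)(\lambda)/\Phi(f_\lambda)(\lambda)$ is then continuous on $[\lambda_1,\lambda_2]$.

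Finally, with $g$ continuous on $[\lambda_1,\lambda_2]$ and $g(\lambda_1)>0>g(\lambda_2)$, Bolzano's intermediate value theorem produces at least one $\widetilde{\lambda}\in(\lambda_1,\lambda_2)$ with $g(\widetilde{\lambda})=0$, that is $\mathcal{V}(\widetilde{\lambda})=\widetilde{\lambda}$, which is precisely \eqref{PtoFijo-Lambda-Temp}. The pair $(f_{\widetilde{\lambda}},\widetilde{\lambda})$ then solves the coupled system \eqref{Volterra-Temp}--\eqref{CondLambda-Temp}, completing the proof.
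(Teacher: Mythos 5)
Your proposal is correct and follows essentially the same route as the paper: sandwiching $\mathcal{V}$ between $\mathcal{V}_1$ and $\mathcal{V}_2$ via Lemma \ref{LemaAcotaV-Temp}, using $\mathcal{E}(\lambda_2)<1$ to place $[\lambda_1,\lambda_2]$ inside $(0,\overline{\lambda})$ so that $f_\lambda$ and hence $\mathcal{V}(\lambda)$ are well defined there, and concluding with the intermediate value theorem from the endpoint inequalities $\mathcal{V}(\lambda_1)>\lambda_1$ and $\mathcal{V}(\lambda_2)<\lambda_2$. The paper's own proof is in fact far terser --- it simply asserts that $\mathcal{V}$ is continuous --- so the perturbation-of-fixed-points argument you identify as the crux is precisely the step the authors leave implicit, and your sketch of it (uniform contraction constant on compact subintervals plus continuous dependence of $\mathcal{H}$ on $\lambda$) is a sound way to fill that gap.
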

\begin{proof}

Under the hypothesis of Lemma \ref{LemaAcotaV-Temp}, if $\mathcal{E}(\lambda_2)<1$  we have that for each $\lambda_1\leq \lambda\leq  \lambda_2<\overline{\lambda}$ the inequality \eqref{V1V2-Temp} holds and $\mathcal{E}(\lambda)<1$.
%$$\lambda_1=\mathcal{V}_1(\lambda_1)<\mathcal{V}(\lambda_1) \quad \text{
%and}\quad \mathcal{V}(\lambda_2)<\mathcal{V}_2(\lambda_2)=\lambda_2.$$
As $\mathcal{V}$ is a continuous function we obtain that there exists at least one solution $\widetilde{\lambda}$ to the equation $\mathcal{V}(\lambda)=\lambda$ that belongs to the  interval $(\lambda_1,\lambda_2)$.

\end{proof}

We have found  sufficient conditions  that guarantee the existence of at least one solution $(f_{\widetilde{\lambda}},\widetilde{\lambda})$ to the 
the problem \eqref{Volterra-Temp}-\eqref{CondLambda-Temp}. Let us return the original problem \eqref{Ec:Calor}-\eqref{Cond:FronteraInicial}.

From the prior analysis, and taking into account that condition \eqref{Hipotesis-ExtraContraccion} can be rewritten as
\be\label{Hipotesis-Epsilon-Temp}
\dfrac{2 k_M \wtk (T^*-T_m)}{k_m^2}<1,
\ee
 we are able to state the main result of this section.

\begin{teo}\label{teotemp} Assume that $k$, $\rho$, $c$ and $v$ are such that  \eqref{Hipotesis-k}, \eqref{Hipotesis-rhoc}, \eqref{Hipotesis-v} and  \eqref{Hipotesis-Epsilon-Temp} hold. If $\mathcal{E}(\lambda_2)<1$ where $\mathcal{E}$ is given by \eqref{Epsilon-Temp} and $\lambda_2$ is defined by \eqref{Lambda2-temp}, then there exists at least one solution to the Stefan problem \eqref{Ec:Calor}-\eqref{Cond:FronteraInicial}, where the free boundary is given by
\be
s(t)=2\widetilde{\lambda}\sqrt{\alpha_0 t} ,\qquad t>0,
\ee
with $\widetilde{\lambda}$ defined by Theorem \ref{TeoCotaTemp}, and the temperature is given by
\be
T(x,t)=(T_m-T^*)f_{\widetilde{\lambda}}(\xi)+T^* ,\qquad \qquad {\color{black}0\leq \xi\leq \widetilde{\lambda} }
\ee
being $\xi=\frac{x}{2\sqrt{\alpha_0 t}}$ the similarity variable and $f_{\widetilde{\lambda}}$ the unique solution of the integral equation \eqref{Volterra-Temp} which was established in Theorem \ref{TeoExistenciaf-Temp}.
\end{teo}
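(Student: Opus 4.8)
The plan is to assemble the results already proved in this section and then reverse the chain of reductions that carried the Stefan problem \eqref{Ec:Calor}-\eqref{Cond:FronteraInicial} down to the integral system \eqref{Volterra-Temp}-\eqref{CondLambda-Temp}. First I would observe that the stated hypotheses on the physical coefficients are precisely what is needed to invoke the abstract machinery: assumptions \eqref{Hipotesis-k}, \eqref{Hipotesis-rhoc} and \eqref{Hipotesis-v} on $k$, $\rho c$ and $\mu$ transfer, through the definitions \eqref{LNv-Raya}-\eqref{Def:muEstrella}, into the boundedness and Lipschitz properties \eqref{Hipotesis-L*}-\eqref{Hipotesis-mu*} for $L^*,N^*,\mu^*$; likewise condition \eqref{Hipotesis-Epsilon-Temp} is merely \eqref{Hipotesis-ExtraContraccion} rewritten in terms of $k_m,k_M,\wtk$. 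Hence all hypotheses of Theorem \ref{TeoCotaTemp} are in force.

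Second, under the extra assumption $\mathcal{E}(\lambda_2)<1$, Theorem \ref{TeoCotaTemp} supplies a value $\widetilde{\lambda}\in(\lambda_1,\lambda_2)\subset(0,\overline{\lambda})$ solving \eqref{PtoFijo-Lambda-Temp}, that is, condition \eqref{CondLambda-Temp}. Since $\widetilde{\lambda}<\overline{\lambda}$, Theorem \ref{TeoExistenciaf-Temp} applies with $\lambda=\widetilde{\lambda}$ and furnishes the unique fixed point $f_{\widetilde{\lambda}}\in C^0[0,\widetilde{\lambda}]$ of $\mathcal{H}$, i.e.\ the solution of \eqref{Volterra-Temp}. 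By construction the pair $(f_{\widetilde{\lambda}},\widetilde{\lambda})$ satisfies the full coupled system \eqref{Volterra-Temp}-\eqref{CondLambda-Temp} simultaneously.

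Third, and this is where some care is required, I would promote this solution of the integral system to a classical solution of the ordinary differential problem \eqref{EcDif-Temp}-\eqref{EcLambda-Temp}. The representation \eqref{Volterra-Temp} forces $f_{\widetilde{\lambda}}(0)=0$ and $f_{\widetilde{\lambda}}(\widetilde{\lambda})=1$, giving \eqref{Condf=0-Temp}-\eqref{Condf=1-Temp}. For the equation itself I would run a regularity bootstrap: as $f_{\widetilde{\lambda}}$ is continuous, the integrand $E(f)/L^*(f)$ in \eqref{Def-Phi} is continuous, so $\Phi(f_{\widetilde{\lambda}})$ and hence $f_{\widetilde{\lambda}}$ are $C^1$; consequently $E(f_{\widetilde{\lambda}})$ is $C^1$ and $L^*(f_{\widetilde{\lambda}})f_{\widetilde{\lambda}}'=E(f_{\widetilde{\lambda}})/\Phi(f_{\widetilde{\lambda}})(\widetilde{\lambda})$ is continuously differentiable. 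Differentiating this last relation and inserting the definitions \eqref{Def-E}-\eqref{Def-UI} recovers \eqref{EcDif-Temp} in the classical (divergence) sense, while evaluating $L^*(f)f'$ at $\xi=\widetilde{\lambda}$ and using \eqref{CondLambda-Temp} yields the Stefan relation \eqref{EcLambda-Temp}.

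Finally, I would undo the two changes of variable. Setting $s(t)=2\widetilde{\lambda}\sqrt{\alpha_0 t}$ and $\theta(x,t)=f_{\widetilde{\lambda}}(\xi)$ with $\xi=x/(2\sqrt{\alpha_0 t})$ reverses the similarity reduction and returns a solution of \eqref{Ec:Calor-Theta}-\eqref{Cond:FronteraInicial-Theta}; then the affine substitution \eqref{cambio-theta}, namely $T=(T_m-T^*)\theta+T^*$, recovers a solution of the original problem \eqref{Ec:Calor}-\eqref{Cond:FronteraInicial} with exactly the stated $s(t)$ and $T(x,t)$. The only genuinely nonroutine point is the backward implication in the third step, i.e.\ verifying that the fixed point is regular enough, that $L^*(f_{\widetilde{\lambda}})f_{\widetilde{\lambda}}'$ is $C^1$, so that \eqref{EcDif-Temp} holds classically; everything else is bookkeeping of the reductions and a direct appeal to Theorems \ref{TeoExistenciaf-Temp} and \ref{TeoCotaTemp}.
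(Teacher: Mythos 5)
Your proposal is correct and follows essentially the same route as the paper: the paper states this theorem as a direct consequence of "the prior analysis," i.e.\ it combines Theorems \ref{TeoExistenciaf-Temp} and \ref{TeoCotaTemp} to obtain the pair $(f_{\widetilde{\lambda}},\widetilde{\lambda})$ solving \eqref{Volterra-Temp}-\eqref{CondLambda-Temp}, notes that \eqref{Hipotesis-Epsilon-Temp} is just \eqref{Hipotesis-ExtraContraccion} rewritten, and then undoes the similarity and affine changes of variable. Your third step (the regularity bootstrap showing the fixed point actually satisfies \eqref{EcDif-Temp}-\eqref{EcLambda-Temp} classically) only makes explicit the equivalence the paper uses implicitly, so it is a welcome elaboration rather than a different argument.
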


\medskip

\section{Neumann condition}

In this section we will study the Stefan problem that arises when we consider a Neumann condition at the fixed face.

Let us notice that in case we consider the Neumann  condition (\ref{Cond:Flujo x=0}), instead of the Dirichlet condition (\ref{Cond:Temp x=0}), after the change of  variables 
\be
\theta=\dfrac{T-T_m}{T_m}\qquad \qquad (T(\theta)=T_m \theta+T_m)
\ee
the problem governed by  (\ref{Ec:Calor}), (\ref{Cond:Flujo x=0}), (\ref{Cond:TempCambioFase})-(\ref{Cond:FronteraInicial}) 
becomes
\begin{subequations}
\begin{align}
& \Nr(\theta) \pder[\theta]{t}=\alpha_0 \frac{\partial}{\partial x}\left( \Lr(\theta)\pder[\theta]{x} \right)-\vr(\theta) \pder[\theta]{x}
,& 0<x<s(t), \quad t>0, \label{Ec:Calor-Theta-Flujo}\\
& \Lr(\theta(0,t)) \pder[\theta]{x}(0,t)=\dfrac{-q}{k_0 T_m \sqrt{t}}, &t>0, \label{Cond:Flujo x=0-Theta}\\
&  \theta(s(t),t)=0, &t>0, \label{Cond:TempCambioFase-Theta-Flujo}\\
&\Lr(\theta(s(t),t))\pder[\theta]{x}(s(t),t)=\dfrac{-\rho_0 \ell}{k_0 T_m}\dot{s}(t) , &t>0, \label{Cond:Stefan-Theta-Flujo}\\
& s(0)=0,\label{Cond:FronteraInicial-Theta-Flujo}
\end{align}
\end{subequations}
where $\Nr,\Lr$ and $\vr$ are given by \eqref{LNv-Raya} with $T(\theta)=T_m\theta+T_m$.

Then, if we introduce the similarity transformation (\ref{similarity-variable}), we obtain the following ordinary differential problem
\begin{subequations}
\begin{align}
&\Big(L^*(f)f'(\xi)\Big)'+2f'(\xi) \Big(N^*(f)\xi-\mu^*(f) \Big)=0 ,& 0< \xi<\lambda \label{EcDif-f-flujo}\\
&L^*(f(0))f'(0)=-q^*,  \label{Condf=0-flujo}\\
&  f(\lambda)=0,  \label{Condf=1-flujo}\\
& f'(\lambda)=-M\lambda, \label{EcLambda-flujo}
\end{align}
\end{subequations}
where $\xi=\tfrac{x}{2\sqrt{\alpha_0 t}}$ is the similarity variable and  $L^*$, $N^*$ and $\mu^*$ are given by 
\eqref{LNv-Estrella} and \eqref{Def:muEstrella}, respectively. Moreover, $q^*$ and $M$ are defined by $q^*=\tfrac{2q\sqrt{\alpha_0}}{k_0 T_m}$ and \mbox{$M=\tfrac{2\ell k_0}{T_m c_0 \;k(T_m)}$.}

\medskip

We can deduce that $(f,\lambda)$ is a solution to the ordinary differential problem \eqref{EcDif-f-flujo}-\eqref{EcLambda-flujo} if and only if $(f,\lambda)$ satisfies the following integral equation
\be
f(\xi) =q^*\left(\Phi(f)(\lambda)-\Phi(f)(\xi) \right),\qquad {\color{black}0\leq \xi\leq \lambda},\label{EcIntegral-Flujo}
\ee
togehter with the condition 
\be
f'(\lambda)=-M\lambda \label{CondLambda-Flujo}
\ee
where $\Phi(f)$ is defined by \eqref{Def-Phi}.
\medskip

In order to analyse the existence of solution to this problem, let us study first, for a fixed  $\lambda>0$
the integral equation \eqref{EcIntegral-Flujo} for $f$.

In the same manner as we did in the first section, we consider the space $C^0[0,\lambda]$ of continuous real-valued functions defined on $[0,\lambda]$  endowed with the supremum norm and we define the operator  $\mathcal{H}^q$ on $C^0[0,\lambda]$ given by the r.h.s of equation \eqref{EcIntegral-Flujo}:
\be  
\mathcal{H}^q(f)(\xi)=q^*\Big( \Phi(f)(\lambda)-\Phi(f)(\xi)\Big).\label{Def-Hq}
\ee

{%\color{black}

%* Ojo aca...habiamos pensado en $C^1[0,\lambda]$ pero corroborando encontre que no es Banach con la norma del supremo. Para que lo sea, hay que definir la norma de $C^1$ que es la suma de la norma infinito de  $f$ y de la norma infinito de $f'$}

Then, as $(C^0[0,\lambda], \Vert\cdot\Vert)$ is a Banach space we  use the fixed point Banach theorem to prove that for each $\lambda>0$ there exists a unique $f$ such that
\be
\mathcal{H}^q(f)(\xi)=f(\xi) ,\qquad 0\leq \xi\leq \lambda,\label{PtoFijo-Flujo}
\ee
which is  the solution  to \eqref{EcIntegral-Flujo}.

If we assume that $k$, $\rho$, $c$ and $v$ satisfy \eqref{Hipotesis-k}, \eqref{Hipotesis-rhoc} and \eqref{Hipotesis-v}, respectively then $L^*$, $N^*$ and $\mu^*$ verify \eqref{Hipotesis-L*}, \eqref{Hipotesis-N*} and \eqref{Hipotesis-mu*} where in this case 
\be\label{LNmutilde-Flujo}
\widetilde{L}=\tfrac{\widetilde{k}|T_m|}{k_0},\qquad\qquad \widetilde{N}=\tfrac{\widetilde{\gamma} |T_m|}{\rho_0 c_0}\qquad\qquad \widetilde{\mu}=\tfrac{\widetilde{\nu}|T_m|}{\sqrt{\rho_0 c_0 k_0}}.
\ee

Therefore we are able to use the bounds  in Lemma \ref{LemaCotasUIEPhi} and the Lipschitz continuities obtained  in Lemma \ref{LemaLipschitzUIEPhi}. Hence, we can  state the following theorem
 
 \begin{teo} \label{TeoExistenciaf-Flujo} Suppose that $L^*$, $N^*$ and $\mu^*$ satisfy conditions \eqref{Hipotesis-L*}-\eqref{Hipotesis-mu*}. 
 If $0<\lambda<\overline{\lambda_q}$ where $\overline{\lambda_q}>0$ is defined as the unique solution to $\mathcal{E}_q(z)=1$ with
 \be
 \mathcal{E}_q(z):=2 q^* z D_4(z), \label{Epsilonq-Flujo}
 \ee
and $D_4$ is given by \eqref{D1-4},  then there exists a unique solution $f\in C^0[0,\lambda]$ for the  integral equation \eqref{EcIntegral-Flujo}, i.e. \eqref{PtoFijo-Flujo}.
% {\color{black} El $C_2$ en nuestras cuentas es $D_4$ aca!.}

 \end{teo}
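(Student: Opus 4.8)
The plan is to mirror the argument of Theorem~\ref{TeoExistenciaf-Temp}: for a fixed $\lambda>0$, equation \eqref{EcIntegral-Flujo} is equivalent to the fixed-point equation \eqref{PtoFijo-Flujo}, so I would show that the operator $\mathcal{H}^q$ defined in \eqref{Def-Hq} is a contracting self-map of the Banach space $(C^0[0,\lambda],\Vert\cdot\Vert)$ whenever $0<\lambda<\overline{\lambda_q}$, and then invoke the Banach fixed point theorem to obtain a unique $f$.

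First I would verify that $\mathcal{H}^q$ maps $C^0[0,\lambda]$ into itself. Since \eqref{Hipotesis-L*}-\eqref{Hipotesis-mu*} ensure that $L^*(f)$, $N^*(f)$ and $\mu^*(f)$ are bounded and continuous, the functions $U(f)$, $I(f)$, $E(f)$ and hence $\Phi(f)$ from \eqref{Def-Phi} are continuous in $\xi$; multiplying by the constant $q^*$ and subtracting the constant $\Phi(f)(\lambda)$ preserves continuity, so $\mathcal{H}^q(f)\in C^0[0,\lambda]$.

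The key step is the contraction estimate. For $f_1,f_2\in C^0[0,\lambda]$ and $0\le\xi\le\lambda$, the triangle inequality gives
\[
|\mathcal{H}^q(f_1)(\xi)-\mathcal{H}^q(f_2)(\xi)|\le q^*\big(|\Phi(f_1)(\lambda)-\Phi(f_2)(\lambda)|+|\Phi(f_1)(\xi)-\Phi(f_2)(\xi)|\big),
\]
and then I would apply the Lipschitz bound \eqref{Lipschitz-Phi} of Lemma~\ref{LemaLipschitzUIEPhi} to each term, both being bounded by $\lambda D_4(\lambda)\Vert f_1-f_2\Vert$. Taking the supremum over $\xi$ yields
\[
\Vert\mathcal{H}^q(f_1)-\mathcal{H}^q(f_2)\Vert\le 2q^*\lambda D_4(\lambda)\,\Vert f_1-f_2\Vert=\mathcal{E}_q(\lambda)\,\Vert f_1-f_2\Vert,
\]
with $\mathcal{E}_q$ exactly as in \eqref{Epsilonq-Flujo}.

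To guarantee $\mathcal{E}_q(\lambda)<1$ I would study the scalar function $\mathcal{E}_q$. From \eqref{D1-4} the factor $D_4$ is strictly increasing in $z$ (it is assembled from exponentials and the increasing quantities $D_1,D_2,D_3$), so $\mathcal{E}_q(z)=2q^*zD_4(z)$ satisfies $\mathcal{E}_q(0)=0$, $\mathcal{E}_q(+\infty)=+\infty$ and $\mathcal{E}_q'(z)>0$ for $z>0$; hence $\mathcal{E}_q(z)=1$ has a unique root $\overline{\lambda_q}>0$ and $\mathcal{E}_q(\lambda)<1$ for every $0<\lambda<\overline{\lambda_q}$. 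I expect the only genuinely new point relative to the Dirichlet case to be precisely this: here $\mathcal{E}_q(0)=0$ instead of $2L_M\wtL/L_m^2$, so no analogue of the extra contraction hypothesis \eqref{Hipotesis-ExtraContraccion} is needed, and the remaining estimates are a routine adaptation of Lemmas~\ref{LemaCotasUIEPhi} and \ref{LemaLipschitzUIEPhi}. With $\mathcal{E}_q(\lambda)<1$ the map $\mathcal{H}^q$ is a contraction, and Banach's theorem produces the unique fixed point $f\in C^0[0,\lambda]$ solving \eqref{PtoFijo-Flujo}, equivalently \eqref{EcIntegral-Flujo}.
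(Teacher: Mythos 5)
Your proposal is correct and follows essentially the same route as the paper's proof: the same self-map verification, the same triangle-inequality estimate combined with the Lipschitz bound \eqref{Lipschitz-Phi} yielding the contraction factor $\mathcal{E}_q(\lambda)=2q^*\lambda D_4(\lambda)$, and the same monotonicity argument for $\mathcal{E}_q$ (increasing from $0$ to $+\infty$) before invoking the Banach fixed point theorem. Your remark that no analogue of the extra hypothesis \eqref{Hipotesis-ExtraContraccion} is needed, precisely because $\mathcal{E}_q(0)=0$ here, matches the paper exactly.
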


  \begin{proof}
As solving equation \eqref{EcIntegral-Flujo} is equivalent to find a fixed point to the operator  $\mathcal{H}^q$ given by \eqref{Def-Hq}, we will show that it is a contracting self-map of $C^0[0,\lambda]$.

 On one hand, notice that for each $\lambda>0$, taking into account the definition of $\mathcal{H}^q$ and the hypothesis on $L^*, N^*$ and $\mu^*$ we can easily check that $\mathcal{H}^q$ maps $C^0[0,\lambda]$ onto itself. %{\color{black} se agrega algo mas?}
 
 On the other hand, let $f_1,f_2\in C^0[0,\lambda]$, from  \eqref{Lipschitz-Phi}, for each $0\leq\xi\leq\lambda$  we get
 \begin{align*}
 &|\mathcal{H}^q(f_1)(\xi)-\mathcal{H}^q(f_2)(\xi)|\leq  \left|q^*\Big(\Phi(f_1)(\lambda)-\Phi(f_1)(\xi)\Big)-q^*\Big(\Phi(f_2)(\lambda)-\Phi(f_2)(\xi)\Big) \right|\\[0.15cm]
&\leq q^* \Big(|\Phi(f_1)(\lambda)-\Phi(f_2)(\lambda)|+ |\Phi(f_1)(\xi)-\Phi(f_2)(\xi)|\Big)\leq 2 q^* \lambda D_4(\lambda) \| f_1-f_2\|.
 \end{align*}
 Therefore it follows that
$$\| \mathcal{H}^q(f_1)-\mathcal{H}^q(f_2)\| \leq \mathcal{E}_q(\lambda)\|f_1-f_2\|,$$
where $\mathcal{E}_q$ defined by \eqref{Epsilonq-Flujo}, is an increasing function that goes from 0 to $+\infty$ when $z$ goes from 0 to $+\infty$.
Thus there exists a unique $\overline{\lambda}_q>0$ such that $\mathcal{E}_q(\overline{\lambda}_q)=1$. Moreover but most significantly we get
$$\mathcal{E}_q(z)<1,\qquad \forall \; 0<z<\overline{\lambda_q}\qquad\qquad \text{and}\qquad \mathcal{E}_q(z)>1,\qquad \forall \; z>\overline{\lambda_q}.$$
Then, if $\lambda$ is such that $0<\lambda<\overline{\lambda}_q$ we get that $\mathcal{E}_q(\lambda)<1$ and so the operator $\mathcal{H}^q$ becomes a contraction mapping. By the fixed point Banach theorem it must exists a unique solution $f\in C^0[0,\lambda]$ to the integral equation \eqref{PtoFijo-Flujo}, i.e. to the integral equation \eqref{EcIntegral-Flujo}.
 \end{proof}

For each $0<\lambda<\overline{\lambda}_q$ fixed, we have a  unique solution to equation \eqref{EcIntegral-Flujo}, which will be referred as $f(\xi)=f_\lambda(\xi)$ to make visible its dependence on $\lambda$. Notice that we have
\be
f_{\lambda}'(\xi)=-q^* \dfrac{E(f_\lambda)(\xi)}{L^*(f_\lambda)(\xi)}.
\ee
Then the condition \eqref{CondLambda-Flujo}, which remains to be analysed,  becomes equivalent to
\be
\mathcal{V}^q(\lambda)=\lambda,\label{PtoFijoLambda-Flujo}
\ee
%{\color{black} y si aclaramos  que depende de $f_{\lambda}$ pero no lo ponemos de subindice? queda fea la notacion me parece. Podriamos poner $\mathcal{V}^q(\lambda)$....}
where
\be 
\mathcal{V}^q(\lambda)=\mathcal{V}^q({f_\lambda},\lambda):= \dfrac{q^* }{M L^*(f_\lambda)(\lambda)} E_{f_\lambda}(\lambda).\label{Def-Vq-Flujo}
\ee

\medskip

\begin{lema} \label{LemaAcotaVq-Flujo}Assume that \eqref{Hipotesis-L*}-\eqref{Hipotesis-mu*} hold.
%\be
%\dfrac{1}{\sqrt{\pi}}\sqrt{\dfrac{N_M}{L_m}}> \dfrac{\mu_M}{L_m}.\label{Hipotesis-ExtraPtoFijoLambda}
%\ee
Then for all $\lambda\in (0,\overline{\lambda}_q)$ we have that
\be
\mathcal{V}_1^q(\lambda)<\mathcal{V}^q(\lambda)<\mathcal{V}_2^q(\lambda) \label{V1qV2q-Flujo}
\ee
where $\mathcal{V}^q_1$ and $\mathcal{V}^q_2$ are functions defined by
\begin{eqnarray}
\begin{array}{lll}
&\mathcal{V}^q_1(\lambda)=\dfrac{q^*}{M L_M} \exp\left(-\lambda^2\tfrac{N_M}{L_M} \right),\qquad\; \quad &\lambda>0\\ \\
&\mathcal{V}^q_2(\lambda)=\dfrac{q^*}{M L_m}\exp\left(2\lambda\tfrac{\mu_M}{L_m}-\lambda^2\tfrac{N_M}{L_m} \right) ,\qquad &\lambda>0,
\end{array}
\end{eqnarray}
that satisfy the following properties:
\begin{eqnarray}
\begin{array}{llll}
&{\mathcal{V}}^q_1(0)=\dfrac{q^*}{M L_M}>0, \qquad {\mathcal{V}^q_1}(+\infty)=0, \qquad{\mathcal{V}^q_1}'(\lambda)<0,\;\forall \lambda> 0,\\ \\
&\mathcal{V}^q_2(0)=\dfrac{q^*}{M L_m}>0,\qquad \mathcal{V}^q_2(+\infty)=0,\qquad\qquad \\ \\
&{\mathcal{V}^q_2}'(\lambda)>0,\qquad \forall\; 0< \lambda< \tfrac{\mu_M L_M}{L_m N_m},\qquad {\mathcal{V}_2^q}'(\lambda)<0,\qquad \forall  \lambda> \tfrac{\mu_M L_M}{L_m N_m} .
\end{array}
\end{eqnarray}
\end{lema}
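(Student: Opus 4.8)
The plan is to obtain the two–sided estimate \eqref{V1qV2q-Flujo} directly from the definition \eqref{Def-Vq-Flujo} of $\mathcal{V}^q$ together with the pointwise bounds already collected in Lemma \ref{LemaCotasUIEPhi}, and then to read off all the stated limits and monotonicity properties of $\mathcal{V}_1^q$ and $\mathcal{V}_2^q$ by elementary one--variable calculus. Note that only \eqref{Hipotesis-L*}--\eqref{Hipotesis-mu*} are invoked (the contraction hypothesis \eqref{Hipotesis-ExtraContraccion} is not needed), and that for each $\lambda\in(0,\overline{\lambda}_q)$ the function $f_\lambda$ exists and is unique by Theorem \ref{TeoExistenciaf-Flujo}, so $\mathcal{V}^q(\lambda)$ is well defined.

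First I would write $\mathcal{V}^q(\lambda)=\dfrac{q^*}{M}\cdot\dfrac{E(f_\lambda)(\lambda)}{L^*(f_\lambda)(\lambda)}$ and bound the two factors separately. For the lower estimate I use $L^*(f_\lambda)(\lambda)\le L_M$ from \eqref{Hipotesis-L*}(a) together with the left half of the chain \eqref{cota-E}; since $\mu_m>0$, the exponential prefactor $\exp\!\big(2\tfrac{\mu_m}{L_M}\lambda\big)>1$ for $\lambda>0$, which upgrades the resulting inequality to the strict bound $\mathcal{V}_1^q(\lambda)<\mathcal{V}^q(\lambda)$. For the upper estimate I pair $L^*(f_\lambda)(\lambda)\ge L_m$ with the upper bound for $E$ in \eqref{cota-E}, which produces the Gaussian--times--exponential majorant and gives $\mathcal{V}^q(\lambda)<\mathcal{V}_2^q(\lambda)$. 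The key bookkeeping point is to pair $L^*\le L_M$ with the \emph{lower} $E$–bound and $L^*\ge L_m$ with the \emph{upper} $E$–bound, so that the inequalities point the right way and the $\lambda^2$–exponent of $\mathcal{V}_2^q$ is exactly the one inherited from \eqref{cota-E}.

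The analytic properties are then routine. Since $\mathcal{V}_1^q$ is a positive constant times $\exp(-c\lambda^2)$ with $c>0$, the values $\mathcal{V}_1^q(0)=\tfrac{q^*}{ML_M}$, $\mathcal{V}_1^q(+\infty)=0$ and ${\mathcal{V}_1^q}'(\lambda)<0$ for every $\lambda>0$ are immediate. For $\mathcal{V}_2^q$ I would observe that its exponent $g(\lambda)$ is a downward parabola in $\lambda$; differentiating, $g'(\lambda)=2\big(\tfrac{\mu_M}{L_m}-\lambda\tfrac{N_m}{L_M}\big)$ vanishes precisely at $\lambda=\tfrac{\mu_M L_M}{L_m N_m}$, so that $\mathcal{V}_2^q$ increases on $\big(0,\tfrac{\mu_M L_M}{L_m N_m}\big)$ and decreases afterwards, with $\mathcal{V}_2^q(0)=\tfrac{q^*}{ML_m}$ and $\mathcal{V}_2^q(+\infty)=0$ because the quadratic term eventually dominates the linear one in $g$.

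I do not expect a genuine obstacle here: the whole lemma is a packaging of Lemma \ref{LemaCotasUIEPhi} together with calculus on explicit elementary functions. The only place requiring care is the bookkeeping of which extreme constant ($L_m$ versus $L_M$, $N_m$ versus $N_M$) enters each estimate, since this determines both the direction of the strict inequalities in \eqref{V1qV2q-Flujo} and the exact abscissa $\tfrac{\mu_M L_M}{L_m N_m}$ at which ${\mathcal{V}_2^q}'$ changes sign.
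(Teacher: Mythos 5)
Your route is exactly the paper's: the paper's entire proof says that inequality \eqref{V1qV2q-Flujo} follows directly from the bounds in \eqref{cota-E} and that the properties of $\mathcal{V}_1^q$, $\mathcal{V}_2^q$ follow from their definitions, which is precisely your pairing of $L^*\le L_M$ with the lower $E$-bound and $L^*\ge L_m$ with the upper $E$-bound, plus one-variable calculus.

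There is, however, one concrete point you gloss over, and it makes your write-up internally inconsistent. What your chain actually yields is
\[
\dfrac{q^*}{ML_M}\exp\left(2\tfrac{\mu_m}{L_M}\lambda-\tfrac{N_M}{L_m}\lambda^2\right)<\mathcal{V}^q(\lambda)<\dfrac{q^*}{ML_m}\exp\left(2\tfrac{\mu_M}{L_m}\lambda-\tfrac{N_m}{L_M}\lambda^2\right),
\]
i.e.\ a minorant whose quadratic exponent is $-N_M\lambda^2/L_m$ and a majorant whose quadratic exponent is $-N_m\lambda^2/L_M$. The lemma as printed instead has $-N_M\lambda^2/L_M$ in $\mathcal{V}_1^q$ and $-N_M\lambda^2/L_m$ in $\mathcal{V}_2^q$; since $L_m\le L_M$, the printed $\mathcal{V}_1^q$ is \emph{larger} than your minorant and the printed $\mathcal{V}_2^q$ is \emph{smaller} than your majorant, so neither printed inequality is implied by your chain (nor by \eqref{cota-E} at all). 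Consequently your assertion that the $\lambda^2$-exponent of $\mathcal{V}_2^q$ is ``exactly the one inherited from \eqref{cota-E}'' is false for the statement as displayed, and your derivative computation $g'(\lambda)=2\left(\tfrac{\mu_M}{L_m}-\lambda\tfrac{N_m}{L_M}\right)$ is the derivative of the \emph{inherited} exponent, not of the printed one, which would instead give the critical point $\mu_M/N_M$ and contradict the value $\mu_M L_M/(L_m N_m)$ that both you and the lemma assert. The resolution is that the displayed formulas contain typos (they are inconsistent with the lemma's own stated critical point and with the paper's proof), and what you, like the paper, actually prove is the lemma with $\mathcal{V}_1^q$, $\mathcal{V}_2^q$ replaced by the two bounds displayed above. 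You should state this substitution explicitly rather than switching silently between the printed formulas (in the inequality step) and the corrected ones (in the calculus step); once that is done, the strictness via $\mu_m>0$, the limits, and the sign analysis of the derivatives are all correct, and the subsequent existence result for $\lambda_{1q}$, $\lambda_{2q}$ is unaffected because the corrected bounds have the same qualitative behaviour.
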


\begin{proof}
Inequality \eqref{V1qV2q-Flujo}  follows directly  from the bounds obtained in \eqref{cota-E}.
The properties for  $\mathcal{V}^q_1$ and $\mathcal{V}^q_2$ arises straightforward from its definitions.
\end{proof}

\begin{lema} There exists a unique solution $\lambda_{1q}$ 
to the equation  
\be
\mathcal{V}^q_1(\lambda)=\lambda,\quad \lambda> 0,\label{Lambda1-Flujo}
\ee
and there exists a unique solution $\lambda_{2q}>\lambda_{1q}$ to the equation
\be
\mathcal{V}^q_2(\lambda)=\lambda,\quad \lambda>0.\label{Lambda2-Flujo}
\ee
\end{lema}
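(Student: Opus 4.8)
The plan is to recast each equation as a root–finding problem for the auxiliary function $G_i(\lambda)=\mathcal{V}^q_i(\lambda)-\lambda$ on $(0,+\infty)$ and to exploit the sign and monotonicity information about $\mathcal{V}^q_1$ and $\mathcal{V}^q_2$ already collected in Lemma \ref{LemaAcotaVq-Flujo}. For \eqref{Lambda1-Flujo} this is immediate: since $\mathcal{V}^q_1(0)=\tfrac{q^*}{ML_M}>0$, $\mathcal{V}^q_1(+\infty)=0$ and ${\mathcal{V}^q_1}'(\lambda)<0$ for every $\lambda>0$, the map $G_1$ is continuous with $G_1(0^+)>0$, $G_1(+\infty)=-\infty$ and $G_1'(\lambda)={\mathcal{V}^q_1}'(\lambda)-1<-1<0$. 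Hence $G_1$ is strictly decreasing and changes sign exactly once, which yields existence and uniqueness of $\lambda_{1q}$.

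For \eqref{Lambda2-Flujo} I would first settle existence together with the ordering $\lambda_{2q}>\lambda_{1q}$. The boundary behaviour $\mathcal{V}^q_2(0)=\tfrac{q^*}{ML_m}>0$ and $\mathcal{V}^q_2(+\infty)=0$ gives $G_2(0^+)>0$ and $G_2(+\infty)=-\infty$, so by the intermediate value theorem $G_2$ has at least one zero. To place such a zero beyond $\lambda_{1q}$ I would invoke the pointwise comparison $\mathcal{V}^q_1(\lambda)<\mathcal{V}^q_2(\lambda)$ coming from \eqref{V1qV2q-Flujo} (valid on the relevant range, which requires $\lambda_{1q}<\overline{\lambda}_q$): evaluating at $\lambda_{1q}$ gives $G_2(\lambda_{1q})=\mathcal{V}^q_2(\lambda_{1q})-\lambda_{1q}>\mathcal{V}^q_1(\lambda_{1q})-\lambda_{1q}=0$, so a sign change of $G_2$ occurs on $(\lambda_{1q},+\infty)$ and the corresponding root satisfies $\lambda_{2q}>\lambda_{1q}$.

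The main obstacle is the \emph{uniqueness} of $\lambda_{2q}$, since $\mathcal{V}^q_2$ is only unimodal (increasing then decreasing, by Lemma \ref{LemaAcotaVq-Flujo}): thus $G_2$ is not globally monotone and a plain intermediate value argument cannot by itself exclude extra crossings on the increasing branch. To handle this I would study the quotient $R(\lambda)=\mathcal{V}^q_2(\lambda)/\lambda$, for which $R(0^+)=+\infty$ and $R(+\infty)=0$, and try to prove that $R$ is strictly decreasing, so that $R(\lambda)=1$ has a single root, namely $\lambda_{2q}$. Since $(\log R)'(\lambda)=\tfrac{2\mu_M}{L_m}-\tfrac{2N_M}{L_m}\lambda-\tfrac{1}{\lambda}$, strict monotonicity of $R$ is equivalent to the downward parabola $2\mu_M\lambda-2N_M\lambda^2-L_m$ staying negative on $(0,+\infty)$, i.e. to its maximal value $\tfrac{\mu_M^2}{2N_M}-L_m$ being nonpositive. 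This is exactly the delicate point: I expect that controlling this quadratic — equivalently, securing an inequality of the type $\mu_M^2\le 2N_M L_m$ among the bounding constants, or else a sharper transversality argument showing $G_2'(\lambda)<0$ at every root — is where the real work of the proof concentrates, the remaining estimates being routine consequences of the bounds in Lemma \ref{LemaAcotaVq-Flujo}.
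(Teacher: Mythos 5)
Your treatment of $\lambda_{1q}$ (strict monotonicity of $\mathcal{V}^q_1$ plus its endpoint values, so that $\mathcal{V}^q_1(\lambda)-\lambda$ decreases strictly from a positive value to $-\infty$) and your intermediate-value argument for the existence of a root $\lambda_{2q}>\lambda_{1q}$ are precisely what the paper's proof amounts to: its entire text is ``It is immediate taking into account the properties of $\mathcal{V}^q_1$ and $\mathcal{V}^q_2$ shown in Lemma \ref{LemaAcotaVq-Flujo}.'' One small simplification is available for the ordering step: instead of invoking \eqref{V1qV2q-Flujo}, which is only asserted on $(0,\overline{\lambda}_q)$ and really concerns $\mathcal{V}^q$, you can compare the two explicit bounding functions directly; written in the form that actually follows from \eqref{cota-E}, namely $\mathcal{V}^q_1(\lambda)=\tfrac{q^*}{ML_M}\exp\left(2\tfrac{\mu_m}{L_M}\lambda-\tfrac{N_M}{L_m}\lambda^2\right)$ and $\mathcal{V}^q_2(\lambda)=\tfrac{q^*}{ML_m}\exp\left(2\tfrac{\mu_M}{L_m}\lambda-\tfrac{N_m}{L_M}\lambda^2\right)$, the inequality $\mathcal{V}^q_1(\lambda)\leq\mathcal{V}^q_2(\lambda)$ holds for every $\lambda>0$, so no restriction such as $\lambda_{1q}<\overline{\lambda}_q$ is needed.

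The point you isolate as ``the real work'' --- uniqueness of $\lambda_{2q}$ --- is indeed a genuine gap, but it is a gap in the paper, not in your reasoning. Nothing in Lemma \ref{LemaAcotaVq-Flujo} yields uniqueness: a function that starts positive, increases, then decreases to $0$ can meet the identity line once or three times. Concretely, with the displayed formula for $\mathcal{V}^q_2$, take $L_m=L_M=N_m=N_M=1$, $\mu_M=2$ and $q^*/M=0.06$ (all admissible under \eqref{Hipotesis-L*}--\eqref{Hipotesis-mu*}); then $\mathcal{V}^q_2(\lambda)=0.06\,e^{4\lambda-\lambda^2}$, and the function $\log\mathcal{V}^q_2(\lambda)-\log\lambda$ has two interior critical points at $1\pm\tfrac{\sqrt{2}}{2}$ with values of opposite sign ($\approx-0.50$ and $\approx+0.57$), hence three zeros: the equation $\mathcal{V}^q_2(\lambda)=\lambda$ has three roots. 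Your computation of $(\log R)'$ is correct and gives exactly the sufficient condition $\mu_M^2\leq 2N_ML_m$ under which uniqueness does hold, but no such hypothesis appears in the lemma, so ``unique'' cannot be proved as stated. Two remarks support reading this as an overstatement in the paper: the parallel Dirichlet result (Lemma \ref{LemaLambda12-Temperatura}) claims only ``at least one'' solution $\lambda_2$, and the subsequent Theorem \ref{TeoCota-Flujo} uses only the existence of some $\lambda_{2q}$ with $\mathcal{E}_q(\lambda_{2q})<1$, never its uniqueness. So your proposal --- unconditional existence, plus uniqueness under an explicit extra condition --- actually establishes more than the paper's own argument does.
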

\begin{proof}
It is immediate taking into account the properties of $\mathcal{V}^q_1$ and $\mathcal{V}^q_2$ shown in Lemma \ref{LemaAcotaVq-Flujo}.
\end{proof}

\begin{teo} \label{TeoCota-Flujo} Assume that \eqref{Hipotesis-L*}-\eqref{Hipotesis-mu*} hold.  Consider $\lambda_{1q}$ and $\lambda_{2q}$ given by \eqref{Lambda1-Flujo} and \eqref{Lambda2-Flujo}, respectively.
If $\mathcal{E}_q(\lambda_{2q})<1$, where $\mathcal{E}_q$ is defined by \eqref{Epsilonq-Flujo},
  then there exists at least one solution $\widetilde{\lambda}_q\in (\lambda_{1q},\lambda_{2q})$ to the equation \eqref{PtoFijoLambda-Flujo}.
\end{teo}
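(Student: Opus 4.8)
The plan is to recast equation \eqref{PtoFijoLambda-Flujo} as a sign-change problem for the continuous function
$$ g(\lambda):=\mathcal{V}^q(\lambda)-\lambda, $$
and then to produce a zero of $g$ inside $(\lambda_{1q},\lambda_{2q})$ by the intermediate value theorem, exactly in the spirit of the proof of Theorem \ref{TeoCotaTemp}. A solution of \eqref{PtoFijoLambda-Flujo} is precisely a zero of $g$, so the whole task reduces to checking that $g$ is defined and continuous on $[\lambda_{1q},\lambda_{2q}]$ and that it changes sign across this interval.

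First I would verify that $[\lambda_{1q},\lambda_{2q}]\subset(0,\overline{\lambda}_q)$, which is exactly the domain on which the fixed point $f_\lambda$, and hence $\mathcal{V}^q$, is well defined. Since $\mathcal{E}_q$ is strictly increasing with $\mathcal{E}_q(\overline{\lambda}_q)=1$ (established in Theorem \ref{TeoExistenciaf-Flujo}), the standing hypothesis $\mathcal{E}_q(\lambda_{2q})<1=\mathcal{E}_q(\overline{\lambda}_q)$ forces $\lambda_{2q}<\overline{\lambda}_q$; combined with $0<\lambda_{1q}<\lambda_{2q}$ this gives the desired inclusion. On this interval Theorem \ref{TeoExistenciaf-Flujo} supplies, for each $\lambda$, a unique fixed point $f_\lambda$, so $\mathcal{V}^q(\lambda)$ is meaningful and the two-sided estimate \eqref{V1qV2q-Flujo} of Lemma \ref{LemaAcotaVq-Flujo} is available.

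Next I would evaluate the sandwich bounds at the two endpoints. At $\lambda_{1q}$, the lower bound $\mathcal{V}^q(\lambda_{1q})>\mathcal{V}^q_1(\lambda_{1q})$ together with the defining relation $\mathcal{V}^q_1(\lambda_{1q})=\lambda_{1q}$ from \eqref{Lambda1-Flujo} yields $g(\lambda_{1q})>0$. At $\lambda_{2q}$, the upper bound $\mathcal{V}^q(\lambda_{2q})<\mathcal{V}^q_2(\lambda_{2q})$ together with $\mathcal{V}^q_2(\lambda_{2q})=\lambda_{2q}$ from \eqref{Lambda2-Flujo} yields $g(\lambda_{2q})<0$. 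A continuous function positive at $\lambda_{1q}$ and negative at $\lambda_{2q}$ must vanish at some interior point $\widetilde{\lambda}_q\in(\lambda_{1q},\lambda_{2q})$, and this $\widetilde{\lambda}_q$ is the sought solution of \eqref{PtoFijoLambda-Flujo}.

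The hard part will be to justify the continuity of $\mathcal{V}^q$ as a function of $\lambda$, which is the one ingredient the endpoint computation rests on but which Lemma \ref{LemaAcotaVq-Flujo} does not by itself deliver. The expression \eqref{Def-Vq-Flujo} depends on $\lambda$ both explicitly and through the fixed point $f_\lambda$, whose underlying domain $[0,\lambda]$ itself varies with $\lambda$. I would argue the continuity of $\lambda\mapsto f_\lambda$ from the uniform contraction estimate behind Theorem \ref{TeoExistenciaf-Flujo} (continuous dependence of a Banach fixed point on a parameter), dealing with the moving domain by a rescaling or a suitable restriction/extension of the functions, and then transfer this to continuity of $E(f_\lambda)(\lambda)$ and $L^*(f_\lambda)(\lambda)$; the uniform bound $L^*\geq L_m>0$ keeps the quotient in \eqref{Def-Vq-Flujo} bounded and continuous. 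Everything else is a direct transcription of the argument already carried out for the Dirichlet case in Theorem \ref{TeoCotaTemp}.
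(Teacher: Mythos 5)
Your proposal is correct and follows essentially the same route as the paper: the paper's proof (by reference to Theorem \ref{TeoCotaTemp}) likewise restricts to $[\lambda_{1q},\lambda_{2q}]\subset(0,\overline{\lambda}_q)$ using $\mathcal{E}_q(\lambda_{2q})<1$ and the monotonicity of $\mathcal{E}_q$, then combines the sandwich bounds \eqref{V1qV2q-Flujo} at the endpoints with continuity of $\mathcal{V}^q$ and the intermediate value theorem. The only difference is that you explicitly flag the continuity of $\lambda\mapsto\mathcal{V}^q(\lambda)$ (through the $\lambda$-dependent fixed point $f_\lambda$) as requiring a continuous-dependence argument, a point the paper simply asserts without proof.
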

\begin{proof}

It is similar to the proof given in Theorem \ref{TeoCotaTemp}.

%If $\mathcal{E}_q(\lambda_{2q})<1$  we have that for each $\lambda_{1q}\leq \lambda\leq  \lambda_{2q}<\overline{\lambda}_q$ the inequality \eqref{V1qV2q-Flujo} holds. Hence,
%$$\lambda_{1q}=\mathcal{V}^q_1(\lambda_{1q})<\mathcal{V}^q(\lambda_{1q}) \quad \text{
%and}\quad \mathcal{V}^q(\lambda_{2q})<\mathcal{V}^q_2(\lambda_{2q})=\lambda_{2q}.$$
%As $\mathcal{V}^q$ is a continuous function we obtain that there exists at least one solution $\widetilde{\lambda}_q$ to the equation $\mathcal{V}^q(\lambda)=\lambda$ that belongs to the  interval $(\lambda_{1q},\lambda_{2q})$.

\end{proof}

We have found  sufficient conditions  that guarantee the existence of at least one solution $(f_{\widetilde{\lambda}},\widetilde{\lambda})$ to the 
the problem \eqref{EcIntegral-Flujo}-\eqref{CondLambda-Flujo}. Let us return the original problem \eqref{Ec:Calor}, \eqref{Cond:Flujo x=0}, \eqref{Cond:Stefan}-\eqref{Cond:FronteraInicial}. After the prior analysis we are able to state the following result.

\begin{teo} Assume that $k$, $\rho$, $c$ and $v$ are such that  \eqref{Hipotesis-k}, \eqref{Hipotesis-rhoc} and  \eqref{Hipotesis-v}  hold. If $\mathcal{E}_q(\lambda_{2q})<1$ where $\mathcal{E}_q$ is given by \eqref{Epsilonq-Flujo} and $\lambda_{2q}$ is defined by \eqref{Lambda2-Flujo}, then there exists at least one solution to the Stefan problem \eqref{Ec:Calor}, \eqref{Cond:Flujo x=0}, \eqref{Cond:Stefan}-\eqref{Cond:FronteraInicial}, where the free boundary is given by
\be
s(t)=2\widetilde{\lambda}_q\sqrt{\alpha_0 t} ,\qquad t>0,
\ee
with $\widetilde{\lambda}_q$ defined by Theorem \ref{TeoCota-Flujo}, and the temperature is given by
\be
T(x,t)=T_m f_{\widetilde{\lambda_q}}(\xi)+T_m ,\qquad \qquad 0\leq\xi\leq\widetilde{\lambda}_q
\ee
being $\xi=\tfrac{x}{2\sqrt{\alpha_0 t}}$ the similarity variable and $f_{\widetilde{\lambda}_q}$ the unique solution of the integral equation \eqref{EcIntegral-Flujo} which was established in Theorem \ref{TeoExistenciaf-Flujo}.
\end{teo}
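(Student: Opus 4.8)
The plan is to assemble the two fixed-point results already available for the Neumann problem and then to translate the resulting pair back through the similarity transformation and the change of variables $\theta=(T-T_m)/T_m$. First I would record that the hypotheses \eqref{Hipotesis-k}, \eqref{Hipotesis-rhoc} and \eqref{Hipotesis-v} on $k$, $\rho$, $c$ and $v$ force $L^*$, $N^*$ and $\mu^*$ to satisfy \eqref{Hipotesis-L*}-\eqref{Hipotesis-mu*} with the Lipschitz constants given by \eqref{LNmutilde-Flujo}, so that both Theorem \ref{TeoExistenciaf-Flujo} and Theorem \ref{TeoCota-Flujo} apply. Since $\mathcal{E}_q$ is strictly increasing with $\mathcal{E}_q(0)=0$ and $\mathcal{E}_q(+\infty)=+\infty$, the standing assumption $\mathcal{E}_q(\lambda_{2q})<1$ is equivalent to $\lambda_{2q}<\overline{\lambda}_q$; hence the whole interval $(\lambda_{1q},\lambda_{2q})$ lies inside $(0,\overline{\lambda}_q)$, the range where Theorem \ref{TeoExistenciaf-Flujo} furnishes a unique $f_\lambda$ solving \eqref{EcIntegral-Flujo}.

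Next I would invoke Theorem \ref{TeoCota-Flujo} to produce $\widetilde{\lambda}_q\in(\lambda_{1q},\lambda_{2q})$ with $\mathcal{V}^q(\widetilde{\lambda}_q)=\widetilde{\lambda}_q$, i.e.\ satisfying \eqref{PtoFijoLambda-Flujo}, which for $f_{\widetilde{\lambda}_q}$ is exactly the remaining condition \eqref{CondLambda-Flujo}. The pair $(f_{\widetilde{\lambda}_q},\widetilde{\lambda}_q)$ therefore solves the coupled system \eqref{EcIntegral-Flujo}-\eqref{CondLambda-Flujo}. Using the equivalence stated just before \eqref{EcIntegral-Flujo}, namely that $(f,\lambda)$ solves the ordinary differential problem \eqref{EcDif-f-flujo}-\eqref{EcLambda-flujo} if and only if it solves that integral system, I conclude that this pair solves the ODE problem.

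Finally I would unwind the reductions. Setting $\xi=\tfrac{x}{2\sqrt{\alpha_0 t}}$, $\theta(x,t)=f_{\widetilde{\lambda}_q}(\xi)$ and $s(t)=2\widetilde{\lambda}_q\sqrt{\alpha_0 t}$, the similarity ansatz \eqref{similarity-variable} turns \eqref{EcDif-f-flujo}-\eqref{EcLambda-flujo} back into \eqref{Ec:Calor-Theta-Flujo}-\eqref{Cond:FronteraInicial-Theta-Flujo}; I would then check term by term that \eqref{Condf=0-flujo} recovers the flux condition \eqref{Cond:Flujo x=0-Theta}, that $f_{\widetilde{\lambda}_q}(\widetilde{\lambda}_q)=0$ gives \eqref{Cond:TempCambioFase-Theta-Flujo}, that \eqref{EcLambda-flujo} reproduces the Stefan condition \eqref{Cond:Stefan-Theta-Flujo} via $\dot s(t)=\widetilde{\lambda}_q\sqrt{\alpha_0/t}$ together with $L^*(f_{\widetilde{\lambda}_q}(\widetilde{\lambda}_q))=k(T_m)/k_0$, and that $s(0)=0$. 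Reversing $\theta=(T-T_m)/T_m$, i.e.\ $T=T_m\theta+T_m$, yields the temperature $T(x,t)=T_m f_{\widetilde{\lambda}_q}(\xi)+T_m$ and the free boundary $s(t)=2\widetilde{\lambda}_q\sqrt{\alpha_0 t}$ solving the original problem \eqref{Ec:Calor}, \eqref{Cond:Flujo x=0}, \eqref{Cond:Stefan}-\eqref{Cond:FronteraInicial}.

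The genuinely analytic labour lives in the preceding lemmas, so at the level of this theorem there is essentially no hard obstacle; the two points requiring care are the implicit dependence $\lambda\mapsto f_\lambda$ underpinning the continuity of $\mathcal{V}^q$ exploited in Theorem \ref{TeoCota-Flujo}, and the verification that the normalising constants $q^*=\tfrac{2q\sqrt{\alpha_0}}{k_0 T_m}$ and $M=\tfrac{2\ell k_0}{T_m c_0\,k(T_m)}$ are precisely those that make the boundary and Stefan conditions match once both transformations are undone.
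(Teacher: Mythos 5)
Your proposal is correct and follows essentially the same route the paper intends: the paper states this theorem without a separate proof, treating it as the immediate assembly of Theorem \ref{TeoExistenciaf-Flujo} (unique $f_\lambda$ for each $0<\lambda<\overline{\lambda}_q$), Theorem \ref{TeoCota-Flujo} (existence of $\widetilde{\lambda}_q$ solving \eqref{PtoFijoLambda-Flujo}), the stated equivalence of the integral system \eqref{EcIntegral-Flujo}--\eqref{CondLambda-Flujo} with the ODE problem \eqref{EcDif-f-flujo}--\eqref{EcLambda-flujo}, and the reversal of the similarity and $\theta$-transformations. Your spelled-out details (in particular that $\mathcal{E}_q(\lambda_{2q})<1$ places $(\lambda_{1q},\lambda_{2q})$ inside $(0,\overline{\lambda}_q)$, and that $L^*(f(\lambda))=k(T_m)/k_0$ makes the constant $M$ match the Stefan condition) are exactly the checks the paper leaves implicit.
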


\section{Robin condition}

The following section is devoted to the analysis of the Stefan problem  that arises when we consider a Robin condition at the fixed face $x=0$.

Let us consider the problem \eqref{Ec:Calor}, (\ref{Cond:Convectiva x=0}), \eqref{Cond:TempCambioFase}-\eqref{Cond:FronteraInicial}.  After introducing the change of variables (\ref{cambio-theta}) we get
\begin{subequations}
\begin{align}
& \Nr(\theta) \pder[\theta]{t}=\alpha_0 \frac{\partial}{\partial x}\left( \Lr(\theta)\pder[\theta]{x} \right)-\vr(\theta) \pder[\theta]{x}
,& 0<x<s(t), \quad t>0, \label{Ec:Calor-Theta-Convectiva}\\
& \Lr(\theta(0,t)) \pder[\theta]{x}(0,t)=\dfrac{h}{k_0\sqrt{t}}\theta(0,t), &t>0, \label{Cond:Flujo x=0-Convectiva}\\
&  \theta(s(t),t)=0, &t>0, \label{Cond:TempCambioFase-Theta-Convectiva}\\
&\Lr(\theta(s(t),t))\pder[\theta]{x}(s(t),t)=\dfrac{\dot{s}(t)}{\alpha_0 \Ste} , &t>0, \label{Cond:Stefan-Theta-Convectiva}\\
& s(0)=0,\label{Cond:FronteraInicial-Theta-Convectiva}
\end{align}
\end{subequations}
where $\Nr,\Lr$ and $\vr$ are given by \eqref{LNv-Raya} with $T(\theta)=(T_m-T^*)\theta+T^*$.

The similarity transformation (\ref{similarity-variable}) yields to an ordinary differential problem defined by 
\begin{subequations}
\begin{align}
&\Big(L^*(f)f'(\xi)\Big)'+2f'(\xi)\Big( N^*(f)\xi-\mu^*(f)\Big)=0 ,& 0<\xi<\lambda \label{EcDif-f-convectiva}\\
&L^*(f(0))f'(0)=2\Bi\; f(0),  \label{Condf=0-convectiva}\\
&  f(\lambda)=1,  \label{Condf=1-convectiva}\\
&L^*(f(\lambda)) f'(\lambda)=\dfrac{2\lambda}{\Ste}, \label{EcLambda-convectiva}
\end{align}
\end{subequations}
where $\xi=\tfrac{x}{2\sqrt{\alpha_0 t}}$ is the similarity variable,  $L^*$, $N^*$ and $\mu^*$ are given by \eqref{LNv-Estrella} and \eqref{Def:muEstrella}, respectively and $\Bi=\tfrac{h \sqrt{\alpha_0} }{k_0}$  is the Biot number.
 
In addition, this ordinary differential problem is equivalent to find $(f,\lambda)$ such that the integral equation:
\be\label{EcIntegral-Convectiva}
f(\xi)=\dfrac{1+2\Bi \;\Phi(f)(\xi)}{1+2\Bi\;\Phi(f)(\lambda)},\qquad 0\leq\xi\leq\lambda, 
\ee
together with the condition
\be\label{CondfLambda-Convectivo}
f'(\lambda)={\color{black}\frac{2}{L^*(f)(\lambda)\Ste}\lambda}
\ee
 hold, where $\Phi(f)$ is given by \eqref{Def-Phi}.
 
 Let us consider a fixed $\lambda>0$, then we can define the operator $\mathcal{H}^h$ on $C^0[0,\lambda]$ as
 \be \label{OperadorHh-Convectiva}
 \mathcal{H}^h(f)(\xi)=\dfrac{1+2\Bi \;\Phi(f)(\xi)}{1+2\Bi\;\Phi(f)(\lambda)}.
\ee
Therefore,  the integral equation \eqref{EcIntegral-Convectiva} can be rewritten as
\be\label{PtoFijo-Convectiva}
\mathcal{H}^h(f)(\xi)=f(\xi),\qquad \qquad 0\leq\xi\leq\lambda. 
\ee

Let us assume that $k$, $\rho$, $c$ and $v$ satisfy \eqref{Hipotesis-k}, \eqref{Hipotesis-rhoc} and \eqref{Hipotesis-v}, respectively then $L^*$, $N^*$ and $\mu^*$ verify \eqref{Hipotesis-L*}, \eqref{Hipotesis-N*} and \eqref{Hipotesis-mu*}.
As a consequence, we can state the following results.
\medskip
 \begin{teo} \label{TeoExistenciaf-Convectiva} Suppose that $L^*$, $N^*$ and $\mu^*$ satisfy conditions \eqref{Hipotesis-L*}-\eqref{Hipotesis-mu*} and \eqref{Hipotesis-ExtraContraccion}.
%  \be
% \dfrac{2L_M \wtL}{L_m^2}<1. \label{Hipotesis:Contraccion-Convectiva}
% \ee 
 If $0<\lambda<\overline{\lambda}_h$ where $\overline{\lambda}_h>0$ is defined as the unique solution to $\mathcal{E}_h(z)=1$ with
 \be
 \mathcal{E}_h(z):=2 L_M D_4(z) \exp\left( z^2 \tfrac{N_M}{L_m}\right), \label{Epsilonh-Convectiva}
 \ee
and $D_4$ is given by \eqref{D1-4},  then there exists a unique solution $f\in C^0[0,\lambda]$ for the  integral equation \eqref{EcIntegral-Convectiva}, i.e. \eqref{PtoFijo-Convectiva}.
% {\color{black} El $C_2$ en nuestras cuentas es $D_4$ aca!.}

 \end{teo}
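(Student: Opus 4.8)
The plan is to reproduce the strategy of Theorem \ref{TeoExistenciaf-Temp}: since the integral equation \eqref{EcIntegral-Convectiva} is equivalent to the fixed-point equation \eqref{PtoFijo-Convectiva}, I would prove that for $0<\lambda<\overline{\lambda}_h$ the operator $\mathcal{H}^h$ of \eqref{OperadorHh-Convectiva} is a contracting self-map of the Banach space $(C^0[0,\lambda],\|\cdot\|)$ and then invoke the Banach fixed point theorem. The self-map property is immediate: $\Phi(f)$ is continuous in $\xi$ (it is the integral of the positive integrand $E(f)/L^*(f)$), and the denominator $1+2\Bi\,\Phi(f)(\lambda)\geq 1>0$ never vanishes, so $\mathcal{H}^h(f)\in C^0[0,\lambda]$ for every $f$. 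I would also record that $\Phi(f)$ is nondecreasing in $\xi$, a fact used below.

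The core of the proof is the contraction estimate, and this is where the only genuine obstacle lies. For $f_1,f_2\in C^0[0,\lambda]$, setting $A_i=1+2\Bi\,\Phi(f_i)(\xi)$ and $B_i=1+2\Bi\,\Phi(f_i)(\lambda)$, I would expand the difference of quotients as
\[
\frac{A_1}{B_1}-\frac{A_2}{B_2}=\frac{A_1}{B_1 B_2}\,(B_2-B_1)+\frac{1}{B_2}\,(A_1-A_2),
\]
and use $A_1\leq B_1$ (monotonicity of $\Phi$, so $A_1/B_1\leq 1$). The decisive move is to bound the denominator from below not by $1$ but by $B_2=1+2\Bi\,\Phi(f_2)(\lambda)\geq 2\Bi\,\Phi(f_2)(\lambda)$: this cancels exactly the factor $2\Bi$ carried by $A_1-A_2=2\Bi\bigl(\Phi(f_1)(\xi)-\Phi(f_2)(\xi)\bigr)$ and by $B_2-B_1=2\Bi\bigl(\Phi(f_1)(\lambda)-\Phi(f_2)(\lambda)\bigr)$, and collapses the estimate onto precisely the one from the Dirichlet case,
\[
|\mathcal{H}^h(f_1)(\xi)-\mathcal{H}^h(f_2)(\xi)|\leq\frac{1}{\Phi(f_2)(\lambda)}\Big(|\Phi(f_1)(\lambda)-\Phi(f_2)(\lambda)|+|\Phi(f_1)(\xi)-\Phi(f_2)(\xi)|\Big).
\]
Without this cancellation one would obtain a $\Bi$-dependent constant that need not be a contraction for large $\Bi$, so this lower bound is what makes the result uniform in the Biot number.

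From this point the argument runs in parallel with Theorem \ref{TeoExistenciaf-Temp}. Inserting the lower bound $\Phi(f_2)(\lambda)\geq\frac{\lambda}{L_M}\exp\left(-\frac{N_M}{L_m}\lambda^2\right)$ from \eqref{cota-Phi} and the Lipschitz estimate \eqref{Lipschitz-Phi} for each of the two $\Phi$-differences yields
\[
\|\mathcal{H}^h(f_1)-\mathcal{H}^h(f_2)\|\leq 2L_M D_4(\lambda)\exp\left(\tfrac{N_M}{L_m}\lambda^2\right)\|f_1-f_2\|=\mathcal{E}_h(\lambda)\,\|f_1-f_2\|.
\]
It then remains to study $\mathcal{E}_h$: since $D_4(0)=\wtL/L_m^2$ one gets $\mathcal{E}_h(0)=2L_M\wtL/L_m^2<1$ by hypothesis \eqref{Hipotesis-ExtraContraccion}, while $\mathcal{E}_h$ is a product of positive increasing functions, hence strictly increasing with $\mathcal{E}_h(+\infty)=+\infty$. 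Thus there is a unique $\overline{\lambda}_h>0$ with $\mathcal{E}_h(\overline{\lambda}_h)=1$, and $\mathcal{E}_h(\lambda)<1$ for all $0<\lambda<\overline{\lambda}_h$. For such $\lambda$ the operator $\mathcal{H}^h$ is a contraction, and the Banach fixed point theorem delivers the unique $f\in C^0[0,\lambda]$ solving \eqref{PtoFijo-Convectiva}, that is, \eqref{EcIntegral-Convectiva}.
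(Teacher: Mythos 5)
Your proof is correct and follows essentially the approach the paper intends: the paper omits the proof of this theorem precisely because it is the Dirichlet argument of Theorem \ref{TeoExistenciaf-Temp} transplanted to the operator $\mathcal{H}^h$, and your estimate reproduces it, with the bound $1+2\Bi\,\Phi(f_2)(\lambda)\geq 2\Bi\,\Phi(f_2)(\lambda)$ supplying the cancellation of the Biot number that the paper leaves implicit. This cancellation is exactly what justifies the paper's subsequent remark that $\mathcal{E}_h=\mathcal{E}$ and $\overline{\lambda}_h=\overline{\lambda}$, so your write-up is a faithful completion of the argument rather than a different route.
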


 \begin{obs}
Notice that $\mathcal{E}_h=\mathcal{E}$ where $\mathcal{E}$ was defined in Theorem \ref{TeoExistenciaf-Temp}, when we studied the problem with  a Dirichlet condition at the fixed face. Hence, we have that $\overline{\lambda}_h=\overline{\lambda}$.
 \end{obs}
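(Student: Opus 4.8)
The plan is to follow the argument of Theorem~\ref{TeoExistenciaf-Temp} almost verbatim: since $(C^0[0,\lambda],\Vert\cdot\Vert)$ is a Banach space, it suffices to prove that the operator $\mathcal{H}^h$ of \eqref{OperadorHh-Convectiva} is a contracting self-map of $C^0[0,\lambda]$ for $0<\lambda<\overline{\lambda}_h$ and then to invoke the Banach fixed point theorem, whose unique fixed point is precisely the solution of \eqref{EcIntegral-Convectiva}, i.e. of \eqref{PtoFijo-Convectiva}. The self-map property is immediate: for $f\in C^0[0,\lambda]$ the function $\Phi(f)$ is continuous by \eqref{Def-Phi}, and the denominator $1+2\Bi\,\Phi(f)(\lambda)$ is bounded below by $1$ since $\Phi(f)(\lambda)>0$ by \eqref{cota-Phi}; hence $\mathcal{H}^h(f)\in C^0[0,\lambda]$.

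For the contraction estimate I would fix $f_1,f_2\in C^0[0,\lambda]$ and, writing $a_i=1+2\Bi\,\Phi(f_i)(\xi)$ and $b_i=1+2\Bi\,\Phi(f_i)(\lambda)$, use the splitting $a_1b_2-a_2b_1=a_1(b_2-b_1)+b_1(a_1-a_2)$. Since the integrand in \eqref{Def-Phi} is positive, $\Phi(f_1)$ is increasing in $\xi$, so $0<a_1\le b_1$ and therefore
\[
\bigl|\mathcal{H}^h(f_1)(\xi)-\mathcal{H}^h(f_2)(\xi)\bigr|
\le \frac{1}{b_2}\Bigl(|b_2-b_1|+|a_1-a_2|\Bigr).
\]
Both $|a_1-a_2|$ and $|b_2-b_1|$ are $2\Bi$ times a difference of values of $\Phi$, so the Lipschitz bound \eqref{Lipschitz-Phi} controls each of them by $2\Bi\,\lambda\,D_4(\lambda)\Vert f_1-f_2\Vert$, whence the parenthesis is at most $4\Bi\,\lambda\,D_4(\lambda)\Vert f_1-f_2\Vert$.

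The decisive step, which I expect to be the main obstacle, is to treat the denominator so that the Biot number cancels and the contraction constant coincides with that of the Dirichlet case. Instead of the crude bound $b_2\ge 1$, which would produce a $\Bi$-dependent constant, I would use $b_2\ge 2\Bi\,\Phi(f_2)(\lambda)$ together with the lower bound $\Phi(f_2)(\lambda)\ge \tfrac{\lambda}{L_M}\exp\!\bigl(-\tfrac{N_M}{L_m}\lambda^2\bigr)$ from \eqref{cota-Phi}, so that $\tfrac{1}{b_2}\le \tfrac{L_M}{2\Bi\,\lambda}\exp\!\bigl(\tfrac{N_M}{L_m}\lambda^2\bigr)$. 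Combining this with the estimate of the parenthesis, the factors $\Bi$ and $\lambda$ cancel and one gets
\[
\Vert \mathcal{H}^h(f_1)-\mathcal{H}^h(f_2)\Vert\le \mathcal{E}_h(\lambda)\,\Vert f_1-f_2\Vert,
\qquad
\mathcal{E}_h(\lambda)=2L_M D_4(\lambda)\exp\!\Bigl(\tfrac{N_M}{L_m}\lambda^2\Bigr).
\]

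Finally, since $\mathcal{E}_h$ is literally the function $\mathcal{E}$ of \eqref{Epsilon-Temp}, the monotonicity analysis already performed in Theorem~\ref{TeoExistenciaf-Temp} transfers directly: under hypothesis \eqref{Hipotesis-ExtraContraccion} the equation $\mathcal{E}_h(z)=1$ has a unique root $\overline{\lambda}_h=\overline{\lambda}$, with $\mathcal{E}_h(\lambda)<1$ for every $0<\lambda<\overline{\lambda}_h$. Consequently $\mathcal{H}^h$ is a contraction on $C^0[0,\lambda]$, and the Banach fixed point theorem yields the unique $f\in C^0[0,\lambda]$ solving \eqref{PtoFijo-Convectiva}, i.e. \eqref{EcIntegral-Convectiva}.
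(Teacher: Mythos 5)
Your proposal is correct, but it proves much more than the paper does for this remark. In the paper, the remark is justified by nothing more than comparing the two displayed formulas: $\mathcal{E}_h$ in \eqref{Epsilonh-Convectiva} and $\mathcal{E}$ in \eqref{Epsilon-Temp} are literally the same expression $2L_M D_4(z)\exp\left(\tfrac{N_M}{L_m}z^2\right)$, so the unique roots of $\mathcal{E}_h(z)=1$ and $\mathcal{E}(z)=1$ trivially coincide, i.e. $\overline{\lambda}_h=\overline{\lambda}$. What you do instead is re-derive the contraction estimate for the Robin operator $\mathcal{H}^h$ of \eqref{OperadorHh-Convectiva} from scratch --- in effect you supply the proof of Theorem \ref{TeoExistenciaf-Convectiva}, which the paper states without proof --- and show \emph{why} the constant comes out identical: the splitting $a_1b_2-a_2b_1=a_1(b_2-b_1)+b_1(a_1-a_2)$ together with monotonicity of $\Phi$ gives the factor $\tfrac{1}{b_2}\left(|b_2-b_1|+|a_1-a_2|\right)$, the Lipschitz bound \eqref{Lipschitz-Phi} contributes $4\Bi\,\lambda D_4(\lambda)\Vert f_1-f_2\Vert$, and the lower bound $b_2\geq 2\Bi\,\Phi(f_2)(\lambda)\geq 2\Bi\tfrac{\lambda}{L_M}\exp\left(-\tfrac{N_M}{L_m}\lambda^2\right)$ from \eqref{cota-Phi} makes both $\Bi$ and $\lambda$ cancel, leaving exactly $\mathcal{E}(\lambda)$. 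I checked the algebra and each cited inequality; the estimate is valid, and it mirrors the structure of the paper's proof of Theorem \ref{TeoExistenciaf-Temp} (same monotonicity trick, same lower bound on $\Phi$, with the extra step of discarding the $1$ in the denominator so the Biot number drops out). The trade-off: the paper's route is a one-line observation but leaves the coincidence of constants looking accidental, while yours is longer but explains the mechanism and simultaneously fills the omitted proof of the Robin existence theorem under hypothesis \eqref{Hipotesis-ExtraContraccion}.
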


We have obtained, for each $0<\lambda<\overline{\lambda}_h$ fixed,  a  unique solution to equation \eqref{EcIntegral-Flujo} , $f(\xi)=f_\lambda(\xi)$. If we compute its derivative, we get
\be
f_{\lambda}'(\xi)=\dfrac{2\Bi}{\left(1+2\Bi \; \Phi({f_\lambda})(\lambda)\right)}\dfrac{E(f_\lambda)(\xi)}{L^*(f_\lambda)(\xi)} . 
\ee
Then the condition \eqref{CondfLambda-Convectivo}, which remains to be analysed,  becomes equivalent to
\be
\mathcal{V}^h(\lambda)=\lambda,\label{PtoFijoLambda-Convectivo}
\ee
where
\be 
\mathcal{V}^h(\lambda)=\mathcal{V}^h({f_\lambda},\lambda):= \dfrac{\Ste\; \Bi\; E(f_\lambda)(\lambda)}{1+2\Bi\; \Phi(f_\lambda)(\lambda) }.\label{Def-Vh-Convectivo}
\ee
We have the following results
\medskip

\begin{lema} \label{LemaAcotaVh-Convectivo} Assume that \eqref{Hipotesis-L*}-\eqref{Hipotesis-mu*} and \eqref{Hipotesis-ExtraContraccion} hold.
%\be
%\dfrac{1}{\sqrt{\pi}}\sqrt{\dfrac{N_M}{L_m}}> \dfrac{\mu_M}{L_m}.\label{Hipotesis-ExtraPtoFijoLambda}
%\ee
Then for all $\lambda\in (0,\overline{\lambda}_h)$ we have that
\be
0<\mathcal{V}^h(\lambda)<\mathcal{V}_2(\lambda), \label{V1hV2h-Convectivo}
\ee
where  $\mathcal{V}_2$ is given by \eqref{V1V2-Temp}.
%\begin{eqnarray}
%\begin{array}{lll}
%&\mathcal{V}_2^h(\lambda)=\dfrac{\mathrm{Ste}}{\sqrt{\pi}}\dfrac{\sqrt{N_M}}{\sqrt{L_m}}L_M \dfrac{\exp\left(2\lambda \tfrac{\mu_M}{L_m}-\lambda^2 \tfrac{N_m}{L_M} \right)}{\erf\left(\sqrt{\tfrac{N_M}{L_m}}\lambda \right)},\qquad &\lambda>0.
%\end{array}
%\end{eqnarray}

\end{lema}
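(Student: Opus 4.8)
The plan is to sandwich $\mathcal{V}^h$ between $0$ and $\mathcal{V}_2$ by exploiting the fact that $\mathcal{V}_2$ is already known, from Lemma \ref{LemaAcotaV-Temp}, to dominate the Dirichlet quantity $\frac{\Ste}{2}\frac{E(f_\lambda)(\lambda)}{\Phi(f_\lambda)(\lambda)}$. The whole argument will rest on the pointwise bounds of Lemma \ref{LemaCotasUIEPhi} together with one trivial simplification of the Robin denominator.

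First I would dispose of the lower bound. Since $\Ste,\Bi>0$ and, by Lemma \ref{LemaCotasUIEPhi}, both $E(f_\lambda)(\lambda)>0$ and $\Phi(f_\lambda)(\lambda)>0$ strictly for $\lambda>0$, the numerator and denominator of the expression \eqref{Def-Vh-Convectivo} for $\mathcal{V}^h$ are both strictly positive, whence $\mathcal{V}^h(\lambda)>0$ at once.

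For the upper bound, the key observation is the elementary inequality $1+2\Bi\,\Phi(f_\lambda)(\lambda)>2\Bi\,\Phi(f_\lambda)(\lambda)$, valid precisely because $\Phi(f_\lambda)(\lambda)>0$. Discarding the additive $1$ in the denominator of \eqref{Def-Vh-Convectivo} can only enlarge the quotient, so
\[
\mathcal{V}^h(\lambda)<\frac{\Ste\,\Bi\,E(f_\lambda)(\lambda)}{2\Bi\,\Phi(f_\lambda)(\lambda)}=\frac{\Ste}{2}\,\frac{E(f_\lambda)(\lambda)}{\Phi(f_\lambda)(\lambda)}.
\]
The right-hand side is exactly the function $\mathcal{V}$ analysed in the Dirichlet case, so inserting the bounds \eqref{cota-E} and \eqref{cota-Phi}, namely $E(f_\lambda)(\lambda)\le\exp\!\big(2\tfrac{\mu_M}{L_m}\lambda-\tfrac{N_m}{L_M}\lambda^2\big)$ and $\Phi(f_\lambda)(\lambda)\ge\tfrac{\sqrt\pi}{2}\tfrac{\sqrt{L_m}}{L_M\sqrt{N_M}}\erf\!\big(\sqrt{\tfrac{N_M}{L_m}}\lambda\big)$, recovers precisely $\mathcal{V}_2(\lambda)$. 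This closes the chain $0<\mathcal{V}^h(\lambda)<\mathcal{V}_2(\lambda)$.

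I do not expect any genuine obstacle here: the statement is an immediate consequence of the bounds already established in Lemma \ref{LemaCotasUIEPhi}. The only point deserving a word of care is the strict positivity of $\Phi(f_\lambda)(\lambda)$ for $\lambda>0$, which is what makes the denominator step a \emph{strict} inequality and which follows from the strictly positive lower bound for $\Phi$ in \eqref{cota-Phi}. I would also remark that, in contrast to the Dirichlet analysis, no nontrivial lower estimate $\mathcal{V}_1$ survives: the extra $1$ in the denominator of \eqref{Def-Vh-Convectivo} degrades the lower bound to the trivial $\mathcal{V}^h(\lambda)>0$, which is exactly what the lemma asserts.
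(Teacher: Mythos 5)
Your proof is correct and follows essentially the same route as the paper, whose own proof is just the one-line remark that the bounds of Lemma \ref{LemaCotasUIEPhi} suffice: positivity of $E(f_\lambda)(\lambda)$ and $\Phi(f_\lambda)(\lambda)$ gives $\mathcal{V}^h(\lambda)>0$, and discarding the $1$ in the denominator together with the estimates \eqref{cota-E} and \eqref{cota-Phi} gives $\mathcal{V}^h(\lambda)<\tfrac{\Ste}{2}\tfrac{E(f_\lambda)(\lambda)}{\Phi(f_\lambda)(\lambda)}\leq\mathcal{V}_2(\lambda)$. Your explicit identification of the intermediate quantity with the Dirichlet function $\mathcal{V}$ is a clean way of organizing exactly this computation.
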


\begin{proof}
The proof follows straightforward by taking into account the bounds given in Lemma \ref{LemaCotasUIEPhi}. 
\end{proof}

Let us notice that, due to  the properties of $\mathcal{V}_2$ studied in Lemma \ref{LemaLambda12-Temperatura}, we know that there exists at least one solution $\lambda_{2}$ 
to the equation  
\be
\mathcal{V}_2(\lambda)=\lambda,\quad \lambda>0.\label{Lambda2-Convectivo}
\ee

}
\begin{teo} \label{TeoCota-Convectivo} Assume that \eqref{Hipotesis-L*}-\eqref{Hipotesis-mu*} and \eqref{Hipotesis-ExtraContraccion} hold.  Consider {\color{black}$\lambda_{2}$} given by \eqref{Lambda2-Convectivo}.
If $\mathcal{E}_h(\lambda_{2})<1$, where $\mathcal{E}_h$ is defined by \eqref{Epsilonh-Convectiva},
  then there exists at least one solution $\widetilde{\lambda}_h\in (0,\lambda_{2})$ to the equation \eqref{PtoFijoLambda-Convectivo}.
\end{teo}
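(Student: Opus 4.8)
The plan is to recast equation \eqref{PtoFijoLambda-Convectivo} as a root-finding problem for the continuous function $g(\lambda):=\mathcal{V}^h(\lambda)-\lambda$ on $(0,\lambda_2]$ and to locate a sign change via the intermediate value theorem, exactly in the spirit of the proof of Theorem \ref{TeoCotaTemp}. Before anything else I would verify that $\mathcal{V}^h$ is genuinely defined on all of $(0,\lambda_2]$. Since $\mathcal{E}_h=\mathcal{E}$ is strictly increasing with $\mathcal{E}_h(\overline{\lambda}_h)=1$ (Theorem \ref{TeoExistenciaf-Convectiva} and the remark identifying $\overline{\lambda}_h=\overline{\lambda}$), the hypothesis $\mathcal{E}_h(\lambda_2)<1$ forces $\lambda_2<\overline{\lambda}_h$. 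Hence for every $\lambda\in(0,\lambda_2]$ Theorem \ref{TeoExistenciaf-Convectiva} supplies a unique fixed point $f_\lambda\in C^0[0,\lambda]$ of \eqref{PtoFijo-Convectiva}, so that $\mathcal{V}^h(\lambda)$ in \eqref{Def-Vh-Convectivo} is well defined throughout the interval.

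Next I would establish that $\mathcal{V}^h$ is continuous in $\lambda$. This is the step I expect to be the main obstacle, since it rests on the continuous dependence of the fixed point $f_\lambda$ on the parameter $\lambda$ (made subtle by the fact that the domain $[0,\lambda]$ itself varies). The uniform Lipschitz estimates of Lemma \ref{LemaLipschitzUIEPhi}, together with a contraction constant bounded uniformly below $1$ on any $[\delta,\lambda_2]$ (because $\mathcal{E}_h$ is increasing and $\mathcal{E}_h(\lambda_2)<1$), make $f_\lambda$ depend continuously on $\lambda$; once this is granted, the continuity of the maps $E$ and $\Phi$ from the same lemma transfers to the quotient \eqref{Def-Vh-Convectivo}, yielding continuity of $\mathcal{V}^h$. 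As in the paper's earlier theorems, this continuity is the crux and would be treated in the same manner here.

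It then remains to exhibit a sign change of $g$. For the left endpoint I would compute $\lim_{\lambda\to0^+}\mathcal{V}^h(\lambda)$: since $U(f)(0)=I(f)(0)=1$ we have $E(f_\lambda)(\lambda)\to1$ while $\Phi(f_\lambda)(\lambda)\to0$, so \eqref{Def-Vh-Convectivo} gives $\mathcal{V}^h(\lambda)\to\Ste\,\Bi>0$; consequently $g(\lambda)>0$ for all sufficiently small $\lambda>0$. Note that here, unlike in Theorem \ref{TeoCotaTemp}, there is no useful lower comparison function $\mathcal{V}_1^h$ (Lemma \ref{LemaAcotaVh-Convectivo} only gives $\mathcal{V}^h>0$), so this limit is precisely what replaces the role of $\lambda_1$. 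For the right endpoint I would invoke the upper bound $\mathcal{V}^h(\lambda_2)<\mathcal{V}_2(\lambda_2)$ of Lemma \ref{LemaAcotaVh-Convectivo} and combine it with $\mathcal{V}_2(\lambda_2)=\lambda_2$ from \eqref{Lambda2-Convectivo} to conclude $g(\lambda_2)=\mathcal{V}^h(\lambda_2)-\lambda_2<0$. Since $g$ is continuous on $(0,\lambda_2]$, positive near $0$ and negative at $\lambda_2$, the intermediate value theorem produces at least one $\widetilde{\lambda}_h\in(0,\lambda_2)$ with $g(\widetilde{\lambda}_h)=0$, i.e. $\mathcal{V}^h(\widetilde{\lambda}_h)=\widetilde{\lambda}_h$, which is the desired solution of \eqref{PtoFijoLambda-Convectivo}.
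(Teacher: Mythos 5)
Your proposal is correct and takes essentially the same approach as the paper, whose own proof of this theorem is just a one-line reference to the bound-plus-continuity-plus-intermediate-value argument of Theorem \ref{TeoCotaTemp}, here applied with the estimates $0<\mathcal{V}^h(\lambda)<\mathcal{V}_2(\lambda)$ of Lemma \ref{LemaAcotaVh-Convectivo} and the well-definedness of $\mathcal{V}^h$ on $(0,\lambda_2]$ guaranteed by $\mathcal{E}_h(\lambda_2)<1$. Your explicit handling of the left endpoint --- replacing the absent comparison function $\mathcal{V}_1^h$ by the limit $\mathcal{V}^h(\lambda)\to \Ste\,\Bi>0$ as $\lambda\to 0^+$, obtained from $E(f_\lambda)(\lambda)\to 1$ and $\Phi(f_\lambda)(\lambda)\to 0$ --- is exactly the detail the paper leaves implicit in the word ``similar'', and it is the right way to supply it.
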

\begin{proof}

It is similar to the proof given in Theorem \ref{TeoCotaTemp}. 

%If $\mathcal{E}_q(\lambda_{2q})<1$  we have that for each $\lambda_{1q}\leq \lambda\leq  \lambda_{2q}<\overline{\lambda}_q$ the inequality \eqref{V1qV2q-Flujo} holds. Hence,
%$$\lambda_{1q}=\mathcal{V}^q_1(\lambda_{1q})<\mathcal{V}^q(\lambda_{1q}) \quad \text{
%and}\quad \mathcal{V}^q(\lambda_{2q})<\mathcal{V}^q_2(\lambda_{2q})=\lambda_{2q}.$$
%As $\mathcal{V}^q$ is a continuous function we obtain that there exists at least one solution $\widetilde{\lambda}_q$ to the equation $\mathcal{V}^q(\lambda)=\lambda$ that belongs to the  interval $(\lambda_{1q},\lambda_{2q})$.

\end{proof}

 Let us return the original problem \eqref{Ec:Calor}, \eqref{Cond:Convectiva x=0}, \eqref{Cond:Stefan}-\eqref{Cond:FronteraInicial}. Notice that condition \eqref{Hipotesis-ExtraContraccion} can be rewritten as
 \be\label{Hipotesis-Extra-krhoc-Convectivo}
  \dfrac{2 k_M \wtk (T^*-T_m)}{k_m^2}<1,
 \ee
then we state the following main theorem.
\begin{teo} Assume that $k$, $\rho$, $c$ and $v$ are such that  \eqref{Hipotesis-k}, \eqref{Hipotesis-rhoc}, \eqref{Hipotesis-v}  and \eqref{Hipotesis-Extra-krhoc-Convectivo} hold. If $\mathcal{E}_h(\lambda_{2})<1$ where $\mathcal{E}_h$ is given by \eqref{Epsilonh-Convectiva} and {\color{black}$\lambda_{2}$} is given  by \eqref{Lambda2-Convectivo}, then there exists at least one solution to the Stefan problem \eqref{Ec:Calor}, \eqref{Cond:Convectiva x=0}, \eqref{Cond:Stefan}-\eqref{Cond:FronteraInicial}, where the free boundary is given by
\be
s(t)=2\widetilde{\lambda}_h\sqrt{\alpha_0 t} ,\qquad t>0,
\ee
with $\widetilde{\lambda}_h$ defined by Theorem \ref{TeoCota-Convectivo}, and the temperature is given by
\be
T(x,t)=(T_m-T^*)f_{\widetilde{\lambda}_h}(\xi)+T^* ,\qquad \qquad {\color{black}0\leq \xi\leq \widetilde{\lambda}_h }
\ee
being $\xi=\tfrac{x}{2\sqrt{\alpha_0 t}}$ the similarity variable and $f_{\widetilde{\lambda}_h}$ the unique solution of the integral equation \eqref{EcIntegral-Convectiva} which was established in Theorem \ref{TeoExistenciaf-Convectiva}.
\end{teo}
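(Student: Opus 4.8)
The plan is to assemble the double fixed-point analysis carried out above and then to reverse, one by one, the reductions that led from the original Stefan problem to the system \eqref{EcIntegral-Convectiva}--\eqref{CondfLambda-Convectivo}. First I would observe that hypothesis \eqref{Hipotesis-Extra-krhoc-Convectivo} is exactly \eqref{Hipotesis-ExtraContraccion} rewritten through the definitions of $\wtL$, $\wtN$ and $\wtmu$, so that Theorems \ref{TeoExistenciaf-Convectiva} and \ref{TeoCota-Convectivo} are available. Thus, for every fixed $\lambda\in(0,\overline{\lambda}_h)$, Theorem \ref{TeoExistenciaf-Convectiva} furnishes a unique $f_\lambda\in C^0[0,\lambda]$ solving the integral equation \eqref{EcIntegral-Convectiva}, and the map $\lambda\mapsto f_\lambda$ is the object on which the second fixed-point equation \eqref{PtoFijoLambda-Convectivo} acts.

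Next I would invoke Theorem \ref{TeoCota-Convectivo}: under the assumption $\mathcal{E}_h(\lambda_{2})<1$ there is at least one $\widetilde{\lambda}_h\in(0,\lambda_{2})$ solving $\mathcal{V}^h(\lambda)=\lambda$, which by construction \eqref{Def-Vh-Convectivo} is equivalent to the free-boundary condition \eqref{CondfLambda-Convectivo}. The one point that needs care here is the compatibility of the two fixed-point arguments, namely that $\widetilde{\lambda}_h$ lies in the range $(0,\overline{\lambda}_h)$ where $f_\lambda$ is defined. Since $\mathcal{E}_h=\mathcal{E}$ is strictly increasing with $\mathcal{E}_h(\overline{\lambda}_h)=1$, the inequality $\mathcal{E}_h(\lambda_{2})<1$ forces $\lambda_{2}<\overline{\lambda}_h$, whence $\widetilde{\lambda}_h\in(0,\lambda_{2})\subset(0,\overline{\lambda}_h)$ and $f_{\widetilde{\lambda}_h}$ is precisely the unique solution of \eqref{EcIntegral-Convectiva} supplied by Theorem \ref{TeoExistenciaf-Convectiva}. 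Consequently the pair $(f_{\widetilde{\lambda}_h},\widetilde{\lambda}_h)$ satisfies the full system \eqref{EcIntegral-Convectiva}--\eqref{CondfLambda-Convectivo}.

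It then remains to unwind the chain of equivalences. By the equivalence established just before Theorem \ref{TeoExistenciaf-Convectiva}, a solution of \eqref{EcIntegral-Convectiva}--\eqref{CondfLambda-Convectivo} is a solution of the ordinary differential problem \eqref{EcDif-f-convectiva}--\eqref{EcLambda-convectiva}. Setting $\theta(x,t)=f_{\widetilde{\lambda}_h}(\xi)$ with $\xi=\tfrac{x}{2\sqrt{\alpha_0 t}}$ and $s(t)=2\widetilde{\lambda}_h\sqrt{\alpha_0 t}$, the similarity reduction \eqref{similarity-variable} shows that $(\theta,s)$ solves \eqref{Ec:Calor-Theta-Convectiva}--\eqref{Cond:FronteraInicial-Theta-Convectiva}. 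Finally, reversing the change of variables \eqref{cambio-theta} through $T=(T_m-T^*)\theta+T^*$ produces a solution $(T,s)$ of the original problem \eqref{Ec:Calor}, \eqref{Cond:Convectiva x=0}, \eqref{Cond:Stefan}--\eqref{Cond:FronteraInicial} with the stated $s$ and $T$. I expect the main obstacle to be the compatibility check $\lambda_{2}<\overline{\lambda}_h$ just described, which is what makes the continuous function $\mathcal{V}^h$ well defined on all of $(0,\lambda_2]$ so that the intermediate value argument of Theorem \ref{TeoCota-Convectivo} applies; I would also verify that the similarity ansatz genuinely turns the Robin condition \eqref{Cond:Convectiva x=0} into \eqref{Condf=0-convectiva}, the factor $1/\sqrt{t}$ in the boundary flux being exactly what renders the problem self-similar. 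The remaining reductions repeat the computations already carried out in Sections 2 and 3.
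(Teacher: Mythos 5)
Your proposal is correct and follows exactly the route the paper intends: the paper states this theorem with no written proof, as the summary of Theorem \ref{TeoExistenciaf-Convectiva} (existence of $f_\lambda$ for each fixed $\lambda\in(0,\overline{\lambda}_h)$), Theorem \ref{TeoCota-Convectivo} (existence of $\widetilde{\lambda}_h$), and the chain of equivalences linking \eqref{EcIntegral-Convectiva}--\eqref{CondfLambda-Convectivo} to \eqref{EcDif-f-convectiva}--\eqref{EcLambda-convectiva} and then, via \eqref{similarity-variable} and \eqref{cambio-theta}, to the original problem \eqref{Ec:Calor}, \eqref{Cond:Convectiva x=0}, \eqref{Cond:Stefan}--\eqref{Cond:FronteraInicial}. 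Your explicit compatibility check, namely that $\mathcal{E}_h(\lambda_2)<1$ together with the strict monotonicity of $\mathcal{E}_h=\mathcal{E}$ and $\mathcal{E}_h(\overline{\lambda}_h)=1$ forces $\lambda_2<\overline{\lambda}_h$, so that $f_{\widetilde{\lambda}_h}$ is indeed well defined, is precisely the step the paper leaves implicit, and you have it right.
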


\begin{obs}
When the coefficient $h\to+\infty$, i.e. $Bi\to +\infty$ the integral equation \eqref{EcIntegral-Convectiva} becomes $\eqref{Volterra-Temp}$ and the equation \eqref{PtoFijoLambda-Convectivo} becomes \eqref{CondLambda-Temp}. Then,  the solution given by Theorem $\ref{TeoExistenciaf-Convectiva}$ for the problem with a Robin condition at the fixed face  converges to the solution to the problem with a Dirichlet condition at $x=0$ given by  Theorem $\ref{TeoExistenciaf-Temp}$ when $h\to\infty$.
\end{obs}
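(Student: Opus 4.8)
The plan is to carry out the limit $\Bi\to+\infty$ in two stages: first for the inner fixed point $f$ at a frozen value of $\lambda$, and then for the outer fixed point that determines the free-boundary parameter. Throughout I write $\Bi$ in place of $h$ and denote by $f_\lambda^{\,h}$ the unique solution of the Robin integral equation \eqref{EcIntegral-Convectiva} provided by Theorem \ref{TeoExistenciaf-Convectiva}, and by $f_\lambda$ the unique solution of the Dirichlet equation \eqref{Volterra-Temp} from Theorem \ref{TeoExistenciaf-Temp}. Since $\mathcal{E}_h=\mathcal{E}$ we have $\overline{\lambda}_h=\overline{\lambda}$, so for every fixed $\lambda\in(0,\overline{\lambda})$ both operators $\mathcal{H}^h$ and $\mathcal{H}$ are contractions of $C^0[0,\lambda]$ with the one Lipschitz constant $\mathcal{E}(\lambda)<1$, independently of $\Bi$; this uniformity is what will make the limit pass through.

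First I would establish the convergence of the operators at a fixed argument. A direct computation from \eqref{Def-H} and \eqref{OperadorHh-Convectiva} gives, after clearing denominators, the identity
\[
\mathcal{H}^h(f)(\xi)-\mathcal{H}(f)(\xi)=\frac{\Phi(f)(\lambda)-\Phi(f)(\xi)}{\Phi(f)(\lambda)\bigl(1+2\Bi\,\Phi(f)(\lambda)\bigr)},
\]
so that, using the positive lower and upper bounds for $\Phi$ from \eqref{cota-Phi}, one obtains $\|\mathcal{H}^h(f)-\mathcal{H}(f)\|\le C(\lambda)/(1+2\Bi\,c(\lambda))\to 0$ as $\Bi\to+\infty$, with constants depending only on $\lambda$ and not on $f$. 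Applying this at $f=f_\lambda$ and invoking the standard stability estimate for contractions,
\[
\|f_\lambda^{\,h}-f_\lambda\|\le\frac{\|\mathcal{H}^h(f_\lambda)-\mathcal{H}(f_\lambda)\|}{1-\mathcal{E}(\lambda)},
\]
yields $f_\lambda^{\,h}\to f_\lambda$ uniformly on $[0,\lambda]$. Because both the bounds of \eqref{cota-Phi} and the contraction constant $\mathcal{E}(\lambda)$ are controlled uniformly on any compact subinterval of $(0,\overline{\lambda})$, this convergence is in fact uniform in $\lambda$ on such subintervals, at a rate $O(1/\Bi)$.

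Next I would transfer the limit to the free-boundary equation. From the Lipschitz continuity of $E$ and $\Phi$ (Lemma \ref{LemaLipschitzUIEPhi}) together with the $f$-convergence just obtained, and the elementary limit $\tfrac{\Bi}{1+2\Bi\,\Phi(f_\lambda)(\lambda)}\to\tfrac{1}{2\Phi(f_\lambda)(\lambda)}$, it follows from \eqref{Def-Vh-Convectivo} and \eqref{Def-VLambda-Temp} that $\mathcal{V}^h(\lambda)\to\mathcal{V}(\lambda)$, uniformly on compact subsets of $(0,\overline{\lambda})$. The parameter $\widetilde{\lambda}_h$ produced by Theorem \ref{TeoCota-Convectivo} lies in the fixed bounded interval $(0,\lambda_2)$ for all $\Bi$, hence admits an accumulation point $\lambda^\ast$ as $\Bi\to+\infty$. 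Passing to the limit in $\mathcal{V}^h(\widetilde{\lambda}_h)=\widetilde{\lambda}_h$ and using the uniform convergence of $\mathcal{V}^h$ shows $\mathcal{V}(\lambda^\ast)=\lambda^\ast$, i.e. $\lambda^\ast$ is a Dirichlet parameter in the sense of \eqref{PtoFijo-Lambda-Temp}. Combining with $f_{\widetilde{\lambda}_h}^{\,h}\to f_{\lambda^\ast}$ gives that the Robin pair $(f_{\widetilde{\lambda}_h}^{\,h},\widetilde{\lambda}_h)$ converges, along the subsequence, to the Dirichlet pair $(f_{\lambda^\ast},\lambda^\ast)$, which is the assertion once translated back through \eqref{cambio-theta}.

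The step I expect to be the main obstacle is this second transfer, specifically the need to combine the two limits $\Bi\to+\infty$ and $\widetilde{\lambda}_h\to\lambda^\ast$ at once: one must know that $f_{\widetilde{\lambda}_h}^{\,h}$ converges to $f_{\lambda^\ast}$ even though both the Biot number and the endpoint of the interval move together, which is why the uniform-in-$\lambda$ version of the contraction estimate from the first stage is essential. A secondary, more structural difficulty is that Theorems \ref{TeoCotaTemp} and \ref{TeoCota-Convectivo} guarantee only existence, not uniqueness, of the free-boundary parameter, so strictly speaking one obtains convergence along subsequences to \emph{some} Dirichlet solution; full convergence of $\widetilde{\lambda}_h$ as $\Bi\to+\infty$ would follow if the Dirichlet root of \eqref{PtoFijo-Lambda-Temp} were known to be unique.
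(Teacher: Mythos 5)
Your proposal is correct, and it does substantially more than the paper: the paper states this remark without any proof, justifying it purely formally by letting $\Bi\to+\infty$ inside \eqref{EcIntegral-Convectiva} and \eqref{Def-Vh-Convectivo} and observing that they pass termwise into \eqref{Volterra-Temp} and \eqref{Def-VLambda-Temp} (equivalently \eqref{CondLambda-Temp}). You supply exactly the quantitative content this formal passage omits: your identity $\mathcal{H}^h(f)(\xi)-\mathcal{H}(f)(\xi)=\bigl(\Phi(f)(\lambda)-\Phi(f)(\xi)\bigr)\big/\bigl(\Phi(f)(\lambda)\,(1+2\Bi\,\Phi(f)(\lambda))\bigr)$ is verified by direct computation; the $f$-uniform $O(1/\Bi)$ bound follows from the two-sided bounds \eqref{cota-Phi}; and the fixed-point stability inequality is legitimate precisely because the contraction constant $\mathcal{E}(\lambda)<1$ is shared by $\mathcal{H}$ and $\mathcal{H}^h$ (the paper's own observation that $\mathcal{E}_h=\mathcal{E}$ and $\overline{\lambda}_h=\overline{\lambda}$), hence is independent of $\Bi$; likewise $\lambda_{2}$ in Theorem \ref{TeoCota-Convectivo} comes from the $\Bi$-free majorant $\mathcal{V}_2$, so the family $\widetilde{\lambda}_h$ is bounded and your compactness step is sound. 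Your closing caveat is a genuine refinement of the paper's wording: since neither Theorem \ref{TeoCotaTemp} nor Theorem \ref{TeoCota-Convectivo} yields uniqueness of the free-boundary parameter, only subsequential convergence to \emph{some} Dirichlet pair can be asserted, and the remark's unqualified ``converges'' is loose on exactly this point. Two details remain to tighten. First, your uniform convergence $\mathcal{V}^h\to\mathcal{V}$ holds only on compact subsets of $(0,\overline{\lambda})$, so you must exclude accumulation of $\widetilde{\lambda}_h$ at $0$; this is easy: for $\lambda\le 1$, the bounds \eqref{cota-E} and \eqref{cota-Phi} give $\mathcal{V}^h(\lambda)\ge \Ste\,\Bi\, \exp\left(-\tfrac{N_M}{L_m}\right)\Big/\Bigl(1+\tfrac{\Bi}{\mu_M}\exp\left(\tfrac{2\mu_M}{L_m}\right)\Bigr)$, which stays bounded away from zero as $\Bi\to+\infty$, contradicting $\widetilde{\lambda}_h=\mathcal{V}^h(\widetilde{\lambda}_h)\to 0$. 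Second, in the combined limit $f^{h}_{\widetilde{\lambda}_h}\to f_{\lambda^{\ast}}$ the functions live on the varying intervals $[0,\widetilde{\lambda}_h]$, so besides your $\lambda$-uniform contraction estimate you need continuity of $\lambda\mapsto f_{\lambda}$; this follows by the same stability argument once the endpoint-dependence of $\mathcal{H}$ is bounded via $|\Phi(f)(\lambda_1)-\Phi(f)(\lambda_2)|\le \tfrac{1}{L_m}\exp\left(\tfrac{2\mu_M}{L_m}\max(\lambda_1,\lambda_2)\right)|\lambda_1-\lambda_2|$, a consequence of \eqref{cota-E}. With these two additions your argument is a complete and rigorous proof of a precise version of the remark, whereas the paper's justification remains at the level of the formal limit of the two equations.
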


}

{\color{black} 
\begin{obs} If we consider that the  speed of the convective term in the heat equation \eqref{Ec:Calor} is  $v(T)\equiv 0$, we can recover  the solution obtained  in \cite{BrNa2019} for a null control function which is obtained as a solution to the following integral equation 
\be
f(\xi)=\dfrac{L^{*}(f(0))+2\mathrm{Bi} \;\Phi_{0}(f)(\xi)}{L^{*}(f(0))+2\mathrm{Bi}\;\Phi_{0}(f)(\lambda)}=\dfrac{1+2\mathrm{Bi} \;\Phi(f)(\xi)}{1+2\mathrm{Bi}\;\Phi(f)(\lambda)},\qquad 0\leq\xi\leq\lambda, 
\ee
where 
\be\Phi_{0}(f)(\xi)=\bint_0^{\xi} \dfrac{L^{*}(f(0))}{L^*(f)(z)I(f)(z)}\; \dd z=L^{*}(f(0))\bint_0^{\xi}  \dfrac{E(f)(z)}{L^*(f(z))}\; \dd z=L^{*}(f(0))\Phi(f)(\xi),
\ee and
\be
E(f)(z)=\dfrac{1}{I(f)(z)}
\ee
together with the condition for $\lambda$ given by
\be\label{CondfLambda-Convectivo}
f'(\lambda)={\color{black}\dfrac{2}{L^*(f)(\lambda)\mathrm{Ste}}\lambda}.
\ee
\end{obs}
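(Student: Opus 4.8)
The plan is to show that setting $v(T)\equiv 0$ makes every convection-dependent object introduced in the previous sections degenerate in a controlled way, leaving exactly the integral formulation of \cite{BrNa2019}. First I would use \eqref{velocidad-convectiva} to note that $v\equiv 0$ forces $\mu(T)\equiv 0$, and then \eqref{Def:muEstrella} to conclude that $\mu^*(f)\equiv 0$ identically. Feeding this into the definition \eqref{Def-UI} of $U$, the exponent vanishes and $U(f)(z)\equiv 1$; consequently the weight $E$ from \eqref{Def-E} reduces to $E(f)(z)=1/I(f)(z)$, which is precisely the identity recorded in the statement.

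The second step is to relate $\Phi_0$ to $\Phi$. Inserting $E(f)(z)=1/I(f)(z)$ into the integrand of $\Phi_0$ rewrites $\frac{L^*(f(0))}{L^*(f)(z)I(f)(z)}$ as $L^*(f(0))\,\frac{E(f)(z)}{L^*(f)(z)}$; since $L^*(f(0))$ is constant with respect to $z$, it factors out of the integral, and comparison with \eqref{Def-Phi} gives $\Phi_0(f)(\xi)=L^*(f(0))\,\Phi(f)(\xi)$. Substituting this into the \cite{BrNa2019} integral equation and cancelling the common factor $L^*(f(0))$ from numerator and denominator yields precisely \eqref{EcIntegral-Convectiva}, establishing the second equality in the first display.

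It then remains to match the condition for $\lambda$ and to confirm that the whole reduced problem coincides. The relation \eqref{CondfLambda-Convectivo} is obtained by differentiating \eqref{EcIntegral-Convectiva} and invoking the Stefan condition \eqref{EcLambda-convectiva}; none of these manipulations involve $\mu^*$, so the $\lambda$-condition is insensitive to setting $v\equiv 0$ and agrees with the one in \cite{BrNa2019}. As a final check I would note that, with $\mu^*\equiv 0$, the governing equation \eqref{EcDif-f-convectiva} loses its $\mu^*(f)$ term and becomes the pure diffusion equation treated in \cite{BrNa2019}, so that the entire reduced problem --- not merely its integral reformulation --- agrees with the earlier one. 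The argument is in essence a bookkeeping verification rather than a theorem with a genuine obstacle; the only point demanding care is confirming that the normalising constant $L^*(f(0))$ appearing in the \cite{BrNa2019} formulation is exactly the factor that cancels, so that the two superficially distinct integral equations are in fact identical.
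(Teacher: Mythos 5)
Your proposal is correct and follows essentially the same route as the paper's own (implicit) verification: setting $v\equiv 0$ gives $\mu^*\equiv 0$, hence $U\equiv 1$ and $E=1/I$, so that $\Phi_0(f)=L^*(f(0))\,\Phi(f)$ and the factor $L^*(f(0))$ cancels, turning the integral equation of \cite{BrNa2019} into \eqref{EcIntegral-Convectiva} with the unchanged $\lambda$-condition \eqref{CondfLambda-Convectivo}. The remark is indeed a bookkeeping check, and your write-up supplies exactly the computation the paper leaves embedded in the statement.
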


}

\section{Radiative-convective condition}

We will proceed  to the analysis of the Stefan problem  that arises when we assume a radiative-convective condition at the fixed face $x=0$. 

If we consider the problem \eqref{Ec:Calor}, (\ref{Cond:RadiactivaConvectiva x=0}), \eqref{Cond:TempCambioFase}-\eqref{Cond:FronteraInicial}.  After the change of variables (\ref{cambio-theta}) and  introducing the similarity transformation (\ref{similarity-variable}) we obtain the following ordinary differential problem defined by 
\begin{subequations}
\begin{align}
&\Big(L^*(f)f'(\xi)\Big)'+2f'(\xi)\Big( N^*(f)\xi-\mu^*(f)\Big)=0 ,& 0<\xi<\lambda \label{EcDif-f-radiactiva}\\
&L^*(f(0))f'(0)=2\Bi\; f(0)+r F(f)(0),  \label{Condf=0-radiactiva}\\
&  f(\lambda)=1,  \label{Condf=1-radiactiva}\\
&L^*(f(\lambda)) f'(\lambda)=\dfrac{2\lambda}{\Ste}, \label{EcLambda-radiactiva}
\end{align}
\end{subequations}
where $\xi=\tfrac{x}{2\sqrt{\alpha_0 t}}$ is the similarity variable,  $L^*$, $N^*$ and $\mu^*$ are given by \eqref{LNv-Estrella} and \eqref{Def:muEstrella}, respectively and where $r=\dfrac{2\sigma \epsilon \sqrt{\alpha_0}}{k_0(T^*-T_m)}>0$ and
$F(f)(0)={T^*}^4-\Big((T_m-T^*)f(0)+T^* \Big)^4$.
 \medskip
 
It is easy to see that this ordinary differential problem is equivalent to find $(f,\lambda)$ such that the integral equation hold:
\be\label{EcIntegral-Radiactiva}
\mathcal{H}^r(f)(\xi)=f(\xi),\qquad 0\leq \xi\leq \lambda, 
\ee
together with the condition
\be\label{CondfLambda-Radiactiva}
f'(\lambda)={\color{black} \dfrac{2}{L^*(f(\lambda))\Ste} \lambda},
\ee
where the operator $\mathcal{H}^{r}$ on $C^0[0,\lambda]$ is defined by
\be
\mathcal{H}^r(f)(\xi):= 1-G(f)(0)\Big( \Phi(f)(\lambda)-\Phi(f)(\xi)\Big).
\ee
with $G(f)(0)=2\Bi f(0)+r F(f)(0)$
and  $\Phi(f)$ is given by \eqref{Def-Phi}.
 \medskip

 In order to solve the fixed point equation \eqref{EcIntegral-Radiactiva} for a fixed $\lambda>0$, let us consider the 
 set $X$ given by all non-negative functions  bounded by 1, i.e
 \be
 X=\lbrace f\in C^0[0,\lambda]: f\geq 0,\; ||f||\leq 1\rbrace.
 \ee
 Notice that $X$ is a non-empty closed subset of the Banach space $\left(C^0[0,\lambda],||\cdot || \right)$.

  \begin{teo} \label{TeoExistenciaf-Radiactiva} Suppose that $L^*$, $N^*$ and $\mu^*$ satisfy conditions \eqref{Hipotesis-L*}-\eqref{Hipotesis-mu*} as well as
  \be
 \dfrac{2\mathrm{Bi}+r {T^*}^4}{L_m\sqrt{\tfrac{N_m}{L_M}}}\sqrt{\pi}\exp\left(\tfrac{\mu_M^2 L_M}{L_m^2 N_m} \right)\leq 1, \label{Hipotesis:Contraccion-radiactiva}
 \ee 
 and
 \be
 \dfrac{2\mathrm{Bi}+r D_5}{\mu_M} <1.\label{Hipotesis:ExtraContraccion-radiactiva}
 \ee
 If $0<\lambda<\overline{\lambda}_r$ where $\overline{\lambda}_r>0$ is defined as the unique solution to $\mathcal{E}_r(z)=1$ with
 \be
 \mathcal{E}_r(z):=2\Big(2\mathrm{Bi}+r {T^*}^4\Big)zD_4(z)+\exp\left( 2\tfrac{\mu_M}{L_m}z\right) \dfrac{(2\mathrm{Bi}+r D_5)}{{\mu_M}}, \label{Epsilonr-radiactiva}
 \ee
where $D_4$ is given by \eqref{D1-4} and $D_5=4(T^*-T_m){|T^*|}^3,$
 then there exists a unique solution $f\in X$ for the  integral equation \eqref{EcIntegral-Radiactiva}.
% {\color{black} El $C_2$ en nuestras cuentas es $D_4$ aca!.}

 \end{teo}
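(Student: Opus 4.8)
The plan is to reproduce the Banach fixed point scheme already used for Theorems \ref{TeoExistenciaf-Temp}, \ref{TeoExistenciaf-Flujo} and \ref{TeoExistenciaf-Convectiva}, but now on the set $X$ instead of on all of $C^0[0,\lambda]$. Since $X$ is a non-empty closed subset of the Banach space $(C^0[0,\lambda],\|\cdot\|)$, it is itself a complete metric space, so it suffices to show that $\mathcal{H}^r$ is a contracting self-map of $X$; the Banach fixed point theorem then produces the unique $f\in X$ solving \eqref{EcIntegral-Radiactiva}. Two facts must be established: first that $\mathcal{H}^r(X)\subseteq X$, and second that $\mathcal{H}^r$ contracts with factor $\mathcal{E}_r(\lambda)<1$ whenever $0<\lambda<\overline{\lambda}_r$.

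For the self-map property I would first note that $\Phi(f)$ is increasing, since its integrand $E(f)/L^*(f)$ is strictly positive by Lemma \ref{LemaCotasUIEPhi}; hence $\Phi(f)(\lambda)-\Phi(f)(\xi)\ge 0$ for $0\le\xi\le\lambda$, and $\mathcal{H}^r(f)(\xi)\le 1$ as soon as $G(f)(0)\ge 0$. For $f\in X$ one has $f(0)\in[0,1]$, so $(T_m-T^*)f(0)+T^*\in[T_m,T^*]$ and therefore $F(f)(0)={T^*}^4-\big((T_m-T^*)f(0)+T^*\big)^4\ge 0$; together with $\Bi>0$ this gives $G(f)(0)\ge 0$ and the crude bound $G(f)(0)\le 2\Bi+r{T^*}^4$. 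The crucial ingredient for $\mathcal{H}^r(f)\ge 0$ is a bound on $\Phi(f)(\lambda)$ uniform in $\lambda$: starting from $E(f)(z)\le\exp\!\big(2\tfrac{\mu_M}{L_m}z-\tfrac{N_m}{L_M}z^2\big)$ in \eqref{cota-E} and completing the square in the exponent, the resulting Gaussian integral is dominated by $\tfrac{\sqrt{\pi}}{L_m}\sqrt{\tfrac{L_M}{N_m}}\exp\!\big(\tfrac{\mu_M^2 L_M}{L_m^2 N_m}\big)$. Multiplying by $G(f)(0)\le 2\Bi+r{T^*}^4$ and invoking \eqref{Hipotesis:Contraccion-radiactiva} yields $G(f)(0)\big(\Phi(f)(\lambda)-\Phi(f)(\xi)\big)\le G(f)(0)\Phi(f)(\lambda)\le 1$, whence $0\le\mathcal{H}^r(f)(\xi)\le 1$ and $\mathcal{H}^r(f)\in X$.

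For the contraction I would take $f_1,f_2\in X$, abbreviate $\Delta_i:=\Phi(f_i)(\lambda)-\Phi(f_i)(\xi)$, and add and subtract $G(f_1)(0)\Delta_2$ to obtain
\be
\mathcal{H}^r(f_1)-\mathcal{H}^r(f_2)=-G(f_1)(0)\,[\Delta_1-\Delta_2]+\big(G(f_2)(0)-G(f_1)(0)\big)\Delta_2.
\ee
The increment $|\Delta_1-\Delta_2|$ is bounded by $2\lambda D_4(\lambda)\|f_1-f_2\|$ through two applications of \eqref{Lipschitz-Phi}, and its prefactor by $2\Bi+r{T^*}^4$. For the second term, the restriction $f\in X$ pays off in the Lipschitz estimate of $G(\cdot)(0)$: applying the mean value theorem to $t\mapsto t^4$ with arguments in $[T_m,T^*]$ gives $|F(f_1)(0)-F(f_2)(0)|\le 4|T^*|^3(T^*-T_m)\|f_1-f_2\|=D_5\|f_1-f_2\|$, so $|G(f_1)(0)-G(f_2)(0)|\le(2\Bi+rD_5)\|f_1-f_2\|$, while $\Delta_2\le\Phi(f_2)(\lambda)\le\tfrac{1}{2\mu_M}\exp\!\big(2\tfrac{\mu_M}{L_m}\lambda\big)$ by \eqref{cota-Phi}. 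Collecting both contributions gives $\|\mathcal{H}^r(f_1)-\mathcal{H}^r(f_2)\|\le\mathcal{E}_r(\lambda)\|f_1-f_2\|$ with $\mathcal{E}_r$ as in \eqref{Epsilonr-radiactiva}.

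To conclude, I would observe that $\mathcal{E}_r(0)=\tfrac{2\Bi+rD_5}{\mu_M}<1$ by \eqref{Hipotesis:ExtraContraccion-radiactiva}, that $\mathcal{E}_r$ is strictly increasing (each summand is) with $\mathcal{E}_r(+\infty)=+\infty$, so there is a unique $\overline{\lambda}_r>0$ with $\mathcal{E}_r(\overline{\lambda}_r)=1$ and $\mathcal{E}_r(\lambda)<1$ for $0<\lambda<\overline{\lambda}_r$. For such $\lambda$ the map $\mathcal{H}^r$ is a contraction of the complete space $X$, and Banach's theorem delivers the unique $f\in X$ solving \eqref{EcIntegral-Radiactiva}. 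I expect the self-map step to be the genuine obstacle, as it is what sets this theorem apart from the Dirichlet, Neumann and Robin cases: the quartic radiative nonlinearity forces both the a priori confinement to $X$ and the delicate $\lambda$-uniform bound on $\Phi$ obtained by completing the square, which is precisely what \eqref{Hipotesis:Contraccion-radiactiva} is designed to absorb.
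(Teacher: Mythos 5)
Your proposal is correct and follows essentially the same route as the paper's proof: the self-map step via the completed-square Gaussian bound on $\Phi(f)(\lambda)$ absorbed by \eqref{Hipotesis:Contraccion-radiactiva}, the mean value theorem on the quartic term yielding $D_5$, the Lipschitz estimates of Lemma \ref{LemaLipschitzUIEPhi} together with the bounds of Lemma \ref{LemaCotasUIEPhi} for the contraction, and the monotonicity of $\mathcal{E}_r$ under \eqref{Hipotesis:ExtraContraccion-radiactiva} followed by Banach's theorem on the closed set $X$. The only difference is cosmetic: your two-term decomposition $-G(f_1)(0)\,[\Delta_1-\Delta_2]+\big(G(f_2)(0)-G(f_1)(0)\big)\Delta_2$ produces a factor $\tfrac{1}{2\mu_M}$ where the paper's four-term expansion produces $\tfrac{1}{\mu_M}$, i.e.\ a marginally sharper contraction constant that is still dominated by $\mathcal{E}_r(\lambda)$, so the stated conclusion is unchanged.
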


 \begin{proof}
Let us split the proof into two steps. In the first one, we will see that $\mathcal{H}^r$ is a self-map of $X$ while in the second step we will see that it is a contracting mapping.

Let us show that $\mathcal{H}^r(X)\subset X$. Consider $f\in X$, then have that $0<F(f)(0)<{T^*}^4$ and so, 
$$
0<G(f)(0)\leq 2\Bi+r {T^*}^4 
$$ 
From \eqref{cota-E} and assumption \eqref{Hipotesis:Contraccion-radiactiva},  for every $\xi\in [0,\lambda]$ we can check that
\begin{align*}
&0\leq G(f)(0)\Big( \Phi(f)(\lambda)-\Phi(f)(\xi)\Big)<\dfrac{\Big(2\Bi+r {T^*}^4\Big)}{L_m} \bint_{\xi}^\lambda E(f)(z)\dd z \\[0.25cm]
&\leq \dfrac{\Big(2\Bi+r {T^*}^4 \Big) }{L_m}\bint_\xi^\lambda \exp\left( 2z\tfrac{\mu_M}{L_m}-z^2\tfrac{N_m}{L_M}\right)\dd z\\[0.25cm]
&\leq \dfrac{\Big(2\Bi+r {T^*}^4 \Big) }{L_m}\ \frac{\sqrt{\pi } \exp\left(\frac{\mu_M ^2 L_M}{L_m^2 N_m}\right)\left(\text{erf}\left(\frac{\frac{ \lambda  N_m}{L_M}-\frac{ \mu_M }{L_m}}{\sqrt{\frac{N_m}{L_M}}}\right)+\text{erf}\left(\frac{\frac{ \mu_M }{L_m}-\frac{ \xi  N_m}{L_M}}{ \sqrt{\frac{N_m}{L_M}}}\right)\right)}{2 \sqrt{\frac{N_m}{L_M}}}\\[0.25cm]
&\leq \dfrac{\Big(2\Bi+r {T^*}^4 \Big) }{L_m}\ \frac{\sqrt{\pi } \exp\left(\frac{\mu_M ^2 L_M}{L_m^2 N_m}\right)}{ \sqrt{\frac{N_m}{L_M}}}<1
\end{align*}
Therefore, $0\leq \mathcal{H}^{r}(f)(\xi)\leq 1$. It is clear that $\mathcal{H}^{r}(f)$ belongs to $C^0[0,\lambda]$, hence we get that $\mathcal{H}^{r}(f)\in X$  for every $f\in X$.

Now let us proceed to show that $\mathcal{H}^{r}$ is a contracting mapping. Consider $f_1$ and $f_2$ in $X$. 
From the mean value theorem applied to the function $g(x)=\Big( (T_m-T^*)x+T^* \Big)^4$, for $x_1=f_1(0)$ and $x_2=f_2(0)$ we have that
$$|g(x_1)-g(x_2)|=|g'(x^*)| |x_1-x_2|= 4\Big((T_m-T^*)x^*+T^* \Big)^3 |T_m-T^*| |x_1-x_2| ,$$
where $x^*$ is between $x_1$ and $x_2$, i.e. $0\leq x^*\leq 1$. 
As a consequence it follows that
\begin{align*}
&|F(f_1)(0)-F(f_2)(0)|\leq |g(x_1)-g(x_2)|\leq 4\Big((T_m-T^*)x^*+T^* \Big)^3 |T_m-T^*| |f_1(0)-f_2(0)|\\[0.25cm]
&\leq  4 |T^*-T_m| \;{|T^*|}^3\; ||f_1-f_2||= D_5 ||f_1-f_2||.
\end{align*}
and so
$$|G(f_1)(0)-G(f_2)(0)|\leq \Big( 2\; \Bi+r D_5\Big)\; ||f_1-f_2||.$$

Then, for each $0\leq \xi\leq \lambda$  we have that
\begin{align*}
&|\mathcal{H}^r(f_1)(\xi)-\mathcal{H}^{r}(f_2)(\xi)|\leq  |G(f_1)(0)| \;|\Phi(f_1)(\lambda)-\Phi(f_2)(\lambda)|+ |\Phi(f_2)(\lambda)|\; |G(f_1)(0)-G(f_2)(0)|\\[0.25cm]
&+|G(f_1)(0)| \;|\Phi(f_1)(\xi)-\Phi(f_2)(\xi)|+|\Phi(f_2)(\xi)|\; |G(f_1)(0)-G(f_2)(0)|\\[0.25cm]
&\leq \Big(2 (2\Bi+r {T^*}^4) \lambda D_4(\lambda)+\tfrac{(2\Bi+r D_5)}{\mu_M}  \exp\left(2\tfrac{\mu_M}{L_m}\lambda \right)\Big) ||f_1-f_2||=\mathcal{E}_{r}(\lambda)||f_1-f_2||.
\end{align*}
Under the assumption \eqref{Hipotesis:ExtraContraccion-radiactiva} we have that $\mathcal{E}_r$ satisfies the following properties
$$0<\mathcal{E}_r(0)= \dfrac{2\Bi+r D_5}{\mu_M}<1,\qquad\qquad \mathcal{E}_r(+\infty)=+\infty,\qquad\qquad \mathcal{E}_r'(z)>0,\qquad z\geq 0.$$
Therefore there exists a unique $\overline{\lambda}_r$ such that $\mathcal{E}_r(\overline{\lambda}_r)=1$. In addition, we obtain
$$\mathcal{E}_r(z)<1,\quad \forall \;0<z<\overline{\lambda}_r\qquad \text{and}\qquad \mathcal{E}_r(z)>1,\quad \forall z>\overline{\lambda}_r.$$
From the fixed Banach theorem we can state that for a fixed $\lambda\in (0,\overline{\lambda}_r)$ there exists a unique solution  $f\in X$ to the integral equation \eqref{EcIntegral-Radiactiva}.
 \end{proof}

% ------------------------------------------

For each given constant $0<\lambda<\overline{\lambda}_r$, the unique solution to equation \eqref{EcIntegral-Radiactiva}, $f(\xi)=f_\lambda(\xi)$ satisfies
\be
f_{\lambda}'(\xi)= G(f)(0)\dfrac{E(f_\lambda)(\xi)}{L^*(f_\lambda)(\xi)} . 
\ee
Then the condition \eqref{CondfLambda-Radiactiva} becomes equivalent to solve
\be
\mathcal{V}^r(\lambda)=\lambda,\label{PtoFijoLambda-Radiactivo}
\ee
%{\color{black} y si aclaramos  que depende de $f_{\lambda}$ pero no lo ponemos de subindice? queda fea la notacion me parece. Podriamos poner $\mathcal{V}^q(\lambda)$....}
where
\be 
\mathcal{V}^r(\lambda)=\mathcal{V}^r({f_\lambda},\lambda):= \dfrac{\Ste\; G(f)(0)}{2}  E(f)(\lambda)  .\label{Def-Vr-Radiactivo}
\ee

We can now state the following results. The proofs are omitted  due to the fact that they are obtained analogously to the results presented in the previous sections.
\medskip

\begin{lema} \label{LemaAcotaVr-Radiactivo}Assume that \eqref{Hipotesis-L*}-\eqref{Hipotesis-mu*} and \eqref{Hipotesis:Contraccion-radiactiva}-\eqref{Hipotesis:ExtraContraccion-radiactiva} hold.
%\be
%\dfrac{1}{\sqrt{\pi}}\sqrt{\dfrac{N_M}{L_m}}> \dfrac{\mu_M}{L_m}.\label{Hipotesis-ExtraPtoFijoLambda}
%\ee
Then for all $\lambda\in (0,\overline{\lambda}_r)$ we have that
\be
0<\mathcal{V}^r(\lambda)<\mathcal{V}_2^r(\lambda) \label{V1hV2h-Convectivo}
\ee
where $\mathcal{V}^r_2$ is given by
\begin{eqnarray}
\begin{array}{lll}
&\mathcal{V}_2^r(\lambda)=\dfrac{\mathrm{Ste} \; (2\mathrm{Bi}+r {T^*}^4)}{2} \exp\left(2\lambda \tfrac{\mu_M}{L_m}-\lambda^2 \tfrac{N_m}{L_M} \right),\qquad &\lambda>0.
\end{array}
\end{eqnarray}
Moreover, there exists at least one solution $\lambda_{2r}$ 
to the equation  
\be
\mathcal{V}^r_2(\lambda)=\lambda,\quad \lambda>0.\label{Lambda2-Radiactivo}
\ee
\end{lema}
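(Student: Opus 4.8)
The plan is to read off both inequalities in \eqref{V1hV2h-Convectivo} directly from the representation \eqref{Def-Vr-Radiactivo} of $\mathcal{V}^r$ together with the bounds collected in Lemma \ref{LemaCotasUIEPhi}, and then to settle the existence of $\lambda_{2r}$ by an intermediate value argument applied to $\mathcal{V}_2^r$.

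First I would establish the lower bound $\mathcal{V}^r(\lambda)>0$. Since $\mathcal{V}^r(\lambda)=\tfrac{\Ste}{2}\,G(f_\lambda)(0)\,E(f_\lambda)(\lambda)$ and $E(f_\lambda)(\lambda)>0$ by Lemma \ref{LemaCotasUIEPhi}, it suffices to show $G(f_\lambda)(0)>0$. The only delicate point of the whole proof is here: a priori $G(f_\lambda)(0)=2\Bi\, f_\lambda(0)+r\,F(f_\lambda)(0)$ is merely nonnegative for $f_\lambda\in X$, and it would vanish if $f_\lambda(0)=0$. To rule this out I would evaluate the integral equation \eqref{EcIntegral-Radiactiva} at $\xi=0$, where $\Phi(f_\lambda)(0)=0$, obtaining $f_\lambda(0)=1-G(f_\lambda)(0)\,\Phi(f_\lambda)(\lambda)$. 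If $f_\lambda(0)=0$ then $F(f_\lambda)(0)={T^*}^4-{T^*}^4=0$, so $G(f_\lambda)(0)=0$ and the identity forces $f_\lambda(0)=1$, a contradiction. Hence $f_\lambda(0)>0$, so $G(f_\lambda)(0)\geq 2\Bi\, f_\lambda(0)>0$ and therefore $\mathcal{V}^r(\lambda)>0$.

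Next I would prove the upper bound. Using $f_\lambda\in X$, so that $0\leq f_\lambda(0)\leq 1$ and consequently $0\leq F(f_\lambda)(0)<{T^*}^4$, I obtain $G(f_\lambda)(0)\leq 2\Bi+r{T^*}^4$. Combining this with the pointwise estimate $E(f_\lambda)(\lambda)\leq \exp\!\left(2\tfrac{\mu_M}{L_m}\lambda-\tfrac{N_m}{L_M}\lambda^2\right)$ from \eqref{cota-E} yields $\mathcal{V}^r(\lambda)<\tfrac{\Ste(2\Bi+r{T^*}^4)}{2}\exp\!\left(2\tfrac{\mu_M}{L_m}\lambda-\tfrac{N_m}{L_M}\lambda^2\right)=\mathcal{V}_2^r(\lambda)$, which is exactly the claimed chain \eqref{V1hV2h-Convectivo}.

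Finally, for the existence of $\lambda_{2r}$ I would analyse $\mathcal{V}_2^r$ as a function on $(0,+\infty)$. It is continuous with $\mathcal{V}_2^r(0)=\tfrac{\Ste(2\Bi+r{T^*}^4)}{2}>0$, and since the quadratic term $-\tfrac{N_m}{L_M}\lambda^2$ dominates the linear term in the exponent, $\mathcal{V}_2^r(\lambda)\to 0$ as $\lambda\to+\infty$. Setting $g(\lambda)=\mathcal{V}_2^r(\lambda)-\lambda$, we have $g(0)>0$ while $g(\lambda)\to-\infty$, so the intermediate value theorem provides at least one $\lambda_{2r}>0$ with $\mathcal{V}_2^r(\lambda_{2r})=\lambda_{2r}$. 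I expect no genuine obstacle beyond the positivity argument for $G(f_\lambda)(0)$; everything else is a direct application of the a priori bounds of Lemma \ref{LemaCotasUIEPhi} and elementary calculus, which is why this lemma mirrors the structure of Lemma \ref{LemaAcotaVh-Convectivo} for the Robin case.
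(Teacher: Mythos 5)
Your proposal is correct and follows essentially the same route as the paper, which omits this proof as being analogous to the Dirichlet and Robin cases: the a priori bounds of Lemma \ref{LemaCotasUIEPhi} applied to $E(f_\lambda)(\lambda)$ together with the bound $G(f_\lambda)(0)\leq 2\Bi+r{T^*}^4$, followed by an intermediate value argument ($\mathcal{V}_2^r(0)>0$, $\mathcal{V}_2^r(+\infty)=0$) for the fixed point $\lambda_{2r}$. Your argument that $f_\lambda(0)>0$, hence $G(f_\lambda)(0)>0$, obtained by evaluating the integral equation \eqref{EcIntegral-Radiactiva} at $\xi=0$ and deriving a contradiction from $f_\lambda(0)=0$, is a welcome refinement: the paper's assertion that $0<F(f)(0)$ for every $f\in X$ fails exactly when $f(0)=0$, and your fixed-point identity is precisely what is needed to make the strict positivity $\mathcal{V}^r(\lambda)>0$ rigorous.
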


\begin{teo} \label{TeoCota-Radiactivo} Assume that \eqref{Hipotesis-L*}-\eqref{Hipotesis-mu*} and \eqref{Hipotesis:Contraccion-radiactiva}-\eqref{Hipotesis:ExtraContraccion-radiactiva} hold.  Consider $\lambda_{2r}$ given by \eqref{Lambda2-Radiactivo}.
If $\mathcal{E}_r(\lambda_{2r})<1$, where $\mathcal{E}_r$ is defined by \eqref{Epsilonr-radiactiva},
  then there exists at least one solution $\widetilde{\lambda}_r\in (0,\lambda_{2r})$ to the equation \eqref{PtoFijoLambda-Radiactivo}.
\end{teo}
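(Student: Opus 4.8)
The plan is to mimic the structure of Theorem \ref{TeoCotaTemp}, since the statement to be proved is its exact analogue for the radiative-convective boundary condition. The goal is to show that the equation $\mathcal{V}^r(\lambda)=\lambda$ has at least one solution in the interval $(0,\lambda_{2r})$. The two ingredients already available are: first, the existence result of Theorem \ref{TeoExistenciaf-Radiactiva}, which guarantees that for every fixed $\lambda \in (0,\overline{\lambda}_r)$ there is a unique $f_\lambda \in X$ solving the integral equation, so that $\mathcal{V}^r(\lambda)$ is well defined on $(0,\overline{\lambda}_r)$; and second, Lemma \ref{LemaAcotaVr-Radiactivo}, which supplies the two-sided bound $0<\mathcal{V}^r(\lambda)<\mathcal{V}_2^r(\lambda)$ together with the existence of $\lambda_{2r}$ solving $\mathcal{V}_2^r(\lambda)=\lambda$.

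First I would verify that the hypothesis $\mathcal{E}_r(\lambda_{2r})<1$ places the whole relevant interval inside the domain where $f_\lambda$ exists. Since $\mathcal{E}_r$ is strictly increasing with $\mathcal{E}_r(\overline{\lambda}_r)=1$ (as established in the proof of Theorem \ref{TeoExistenciaf-Radiactiva}), the condition $\mathcal{E}_r(\lambda_{2r})<1$ forces $\lambda_{2r}<\overline{\lambda}_r$. Consequently, for every $\lambda \in (0,\lambda_{2r}]$ we have $\mathcal{E}_r(\lambda)<1$, so $f_\lambda$ is well defined and $\mathcal{V}^r$ is continuous on the closed interval $[0,\lambda_{2r}]$; the continuity of $\mathcal{V}^r$ in $\lambda$ follows from the continuous dependence of the fixed point $f_\lambda$ on the parameter, which is standard once $\mathcal{H}^r$ is a uniform contraction on the interval.

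Next I would set up the intermediate value argument for the function $g(\lambda):=\mathcal{V}^r(\lambda)-\lambda$. Near $\lambda=0^+$ the lower bound $\mathcal{V}^r(\lambda)>0$ gives $g(\lambda)>-\lambda$, and more precisely $\mathcal{V}^r(0^+)$ is bounded below by a positive quantity (coming from $\mathcal{V}_2^r$ and the strict positivity of $G(f)(0)$), so $g(\lambda)>0$ for small $\lambda$. At the other endpoint, using $\mathcal{V}^r(\lambda_{2r})<\mathcal{V}_2^r(\lambda_{2r})=\lambda_{2r}$ one gets $g(\lambda_{2r})<0$. By the intermediate value theorem applied to the continuous function $g$, there exists at least one $\widetilde{\lambda}_r\in(0,\lambda_{2r})$ with $g(\widetilde{\lambda}_r)=0$, i.e.\ $\mathcal{V}^r(\widetilde{\lambda}_r)=\widetilde{\lambda}_r$, which is exactly \eqref{PtoFijoLambda-Radiactivo}.

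The main obstacle I anticipate is the rigorous justification of the continuity (and the one-sided limit as $\lambda\to 0^+$) of $\lambda\mapsto\mathcal{V}^r(\lambda)$, since $\mathcal{V}^r$ depends on $\lambda$ both explicitly through $E(f_\lambda)(\lambda)$ and implicitly through the fixed point $f_\lambda$ whose domain $[0,\lambda]$ itself varies with $\lambda$. The paper sidesteps this by asserting the argument is analogous to Theorem \ref{TeoCotaTemp}, where the same continuity was used; the cleanest way to make it precise is to note that on $[0,\lambda_{2r}]$ the contraction constant $\mathcal{E}_r(\lambda)$ stays uniformly below $1$, so the Banach fixed points $f_\lambda$ vary continuously with $\lambda$, and the bound $0<\mathcal{V}^r<\mathcal{V}_2^r$ from Lemma \ref{LemaAcotaVr-Radiactivo} controls the boundary behaviour. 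Everything else is a routine sign check at the two endpoints followed by the intermediate value theorem.
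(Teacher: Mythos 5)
Your proposal is correct and follows essentially the same route as the paper: the paper omits this proof as analogous to Theorem \ref{TeoCotaTemp}, whose argument is exactly your scheme --- use $\mathcal{E}_r(\lambda_{2r})<1$ and monotonicity of $\mathcal{E}_r$ to ensure $f_\lambda$ (hence $\mathcal{V}^r$) is well defined and continuous on $(0,\lambda_{2r}]$, then sandwich with Lemma \ref{LemaAcotaVr-Radiactivo} and apply the intermediate value theorem, with the left endpoint handled by the strict positivity of $\mathcal{V}^r$ (via $G(f)(0)>0$) near $\lambda=0$. Your write-up is, if anything, more explicit than the paper's about the continuity of $\lambda\mapsto f_\lambda$ and the one-sided limit at $0^+$, which are precisely the points the paper leaves implicit.
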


We return the original problem \eqref{Ec:Calor}, \eqref{Cond:RadiactivaConvectiva x=0}, \eqref{Cond:Stefan}-\eqref{Cond:FronteraInicial}. Notice that conditions \eqref{Hipotesis:Contraccion-radiactiva} and \eqref{Hipotesis:ExtraContraccion-radiactiva} can be rewritten as
 \be\label{Hipotesis-Extra-krhoc-Radiactivo}
 \dfrac{\left(2\Bi+r({T^*}^4-T_m^4)\right) \sqrt{k_0\rho_0 c_0 k_M}}{ k_m\sqrt{\gamma_m}} \sqrt{\pi} \exp\left(\tfrac{\nu_M^2 k_M}{ k_m^2 \gamma_m}\right)<1,\qquad \dfrac{\left(2\Bi+r D_5\right)\sqrt{\rho_0 c_0 k_0}}{\nu_M }<1.
 \ee

\begin{teo} Assume that  \eqref{Hipotesis-k}, \eqref{Hipotesis-rhoc}, \eqref{Hipotesis-v}  and \eqref{Hipotesis-Extra-krhoc-Radiactivo} hold. If $\mathcal{E}_r(\lambda_{2r})<1$ where $\mathcal{E}_r$ is given by \eqref{Epsilonr-radiactiva} and $\lambda_{2r}$ is given  by \eqref{Lambda2-Radiactivo}, then there exists at least one solution to the Stefan problem \eqref{Ec:Calor}, \eqref{Cond:RadiactivaConvectiva x=0}, \eqref{Cond:Stefan}-\eqref{Cond:FronteraInicial}, where the free boundary is given by
\be
s(t)=2\widetilde{\lambda}_r\sqrt{\alpha_0 t} ,\qquad t>0,
\ee
with $\widetilde{\lambda}_r$ defined by Theorem \ref{TeoCota-Radiactivo}, and the temperature is given by
\be
T(x,t)=(T_m-T^*)f_{\widetilde{\lambda}_r}(\xi)+T^* ,\qquad \qquad 0\leq \xi\leq \widetilde{\lambda}_r 
\ee
being $\xi=\tfrac{x}{2\sqrt{\alpha_0 t}}$ the similarity variable and $f_{\widetilde{\lambda}_r}$ the unique solution of the integral equation \eqref{EcIntegral-Radiactiva} which was established in Theorem \ref{TeoExistenciaf-Radiactiva}.
\end{teo}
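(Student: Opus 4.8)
The plan is to assemble the pieces already established in this section, so that the argument is essentially a bookkeeping one that re-traces the chain of equivalences back to the original Stefan problem. First I would observe that the hypotheses \eqref{Hipotesis-k}, \eqref{Hipotesis-rhoc} and \eqref{Hipotesis-v} on the physical coefficients $k$, $\rho c$ and $\mu$ translate, through the definitions \eqref{LNv-Estrella} and \eqref{Def:muEstrella}, into the bound-and-Lipschitz conditions \eqref{Hipotesis-L*}, \eqref{Hipotesis-N*} and \eqref{Hipotesis-mu*} for $L^*$, $N^*$ and $\mu^*$, exactly as recorded when those derived hypotheses were introduced. Next I would verify that the two technical inequalities \eqref{Hipotesis:Contraccion-radiactiva} and \eqref{Hipotesis:ExtraContraccion-radiactiva}, phrased in terms of $L_m, N_m, \mu_M, \Bi, r, T^*$, coincide with the physically written conditions \eqref{Hipotesis-Extra-krhoc-Radiactivo} after substituting $L_m=k_m/k_0$, $N_m=\gamma_m/(\rho_0 c_0)$, $\mu_M=\nu_M/\sqrt{\rho_0 c_0 k_0}$, $\Bi=h\sqrt{\alpha_0}/k_0$ and $r=2\sigma\epsilon\sqrt{\alpha_0}/(k_0(T^*-T_m))$, and using $D_5=4(T^*-T_m){|T^*|}^3$ together with the elementary bound $F(f)(0)\in(0,{T^*}^4)$. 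This is a routine algebraic substitution.

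Then I would invoke the two existence results already proved. By Theorem \ref{TeoExistenciaf-Radiactiva}, for every fixed $\lambda\in(0,\overline{\lambda}_r)$ there is a unique $f_\lambda\in X$ solving the fixed-point equation \eqref{EcIntegral-Radiactiva}. The hypothesis $\mathcal{E}_r(\lambda_{2r})<1$, combined with the strict monotonicity $\mathcal{E}_r'(z)>0$ and the normalization $\mathcal{E}_r(\overline{\lambda}_r)=1$ established in that theorem, forces $\lambda_{2r}<\overline{\lambda}_r$; hence the whole interval $(0,\lambda_{2r})$ lies inside the admissibility range for $\lambda$, and $f_\lambda$ is well defined there. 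By Theorem \ref{TeoCota-Radiactivo}, under the same hypothesis there exists at least one $\widetilde{\lambda}_r\in(0,\lambda_{2r})$ with $\mathcal{V}^r(\widetilde{\lambda}_r)=\widetilde{\lambda}_r$, that is, $f_{\widetilde{\lambda}_r}$ satisfies the remaining condition \eqref{CondfLambda-Radiactiva}. Therefore the pair $(f_{\widetilde{\lambda}_r},\widetilde{\lambda}_r)$ solves the coupled system \eqref{EcIntegral-Radiactiva}--\eqref{CondfLambda-Radiactiva}, which, as noted when the system was derived, is equivalent to the ordinary differential problem \eqref{EcDif-f-radiactiva}--\eqref{EcLambda-radiactiva}.

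Finally I would undo the similarity variable $\xi=x/(2\sqrt{\alpha_0 t})$ and the change of variables \eqref{cambio-theta}. Setting $s(t)=2\widetilde{\lambda}_r\sqrt{\alpha_0 t}$ and $T(x,t)=(T_m-T^*)f_{\widetilde{\lambda}_r}(\xi)+T^*$ reverses the reductions that led from \eqref{Ec:Calor}, \eqref{Cond:RadiactivaConvectiva x=0}, \eqref{Cond:Stefan}--\eqref{Cond:FronteraInicial} to the ODE problem: equation \eqref{EcDif-f-radiactiva} becomes \eqref{Ec:Calor}, the boundary condition \eqref{Condf=0-radiactiva} becomes the radiative-convective condition \eqref{Cond:RadiactivaConvectiva x=0}, while \eqref{Condf=1-radiactiva} and \eqref{EcLambda-radiactiva} recover \eqref{Cond:TempCambioFase} and the Stefan condition \eqref{Cond:Stefan}, and $s(0)=0$ holds by construction. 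This produces a solution of the required similarity form.

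The main obstacle I anticipate is not analytic but organisational: one must confirm that the hypothesis $\mathcal{E}_r(\lambda_{2r})<1$ genuinely guarantees $\widetilde{\lambda}_r<\overline{\lambda}_r$, so that the function $f_{\widetilde{\lambda}_r}$ supplied by Theorem \ref{TeoExistenciaf-Radiactiva} is available at precisely the value $\widetilde{\lambda}_r$ produced by Theorem \ref{TeoCota-Radiactivo}. This compatibility between the two fixed-point analyses, the one in $f$ and the one in $\lambda$, is exactly the double fixed point mechanism, and it is the only place where the two hypotheses interact rather than being applied independently.
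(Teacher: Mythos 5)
Your proposal is correct and follows exactly the route the paper intends: the paper states this theorem without a separate proof, as the assembly of the hypothesis translation into \eqref{Hipotesis-Extra-krhoc-Radiactivo}, the fixed-point result for $f_\lambda$ (Theorem \ref{TeoExistenciaf-Radiactiva}), the fixed-point result for $\widetilde{\lambda}_r$ (Theorem \ref{TeoCota-Radiactivo}), and the inversion of the similarity change of variables. Your additional check that monotonicity of $\mathcal{E}_r$ together with $\mathcal{E}_r(\lambda_{2r})<1$ forces $\lambda_{2r}<\overline{\lambda}_r$ is precisely the compatibility the double fixed-point argument relies on, so nothing is missing.
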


\section{Particular cases}

\subsection{Constant thermal coefficients}

In this section we are going to recover the particular case analysed in \cite{Tu2018} that arises when we consider constant thermal coefficients, 
\be 
\rho(T)=\rho_0, \qquad  c(T)=c_0,\qquad  k(T)=k_0
\ee
and a velocity given by $v(T)=\frac{\mu(T)}{\sqrt{t}}$ with
\be \mu(T)=\rho_0 c_0 \sqrt{\alpha_0} \; \Pe
.\ee
 where $\Pe$ denotes the Peclet number.

Replacing those values in \eqref{LNv-Estrella} and \eqref{Def:muEstrella} we get that $L^*=N^*=1$ and $\mu^*=\Pe$. Then  $\Phi$, $E$, $U$ and $I$ defined by \eqref{Def-Phi}, \eqref{Def-E} and \eqref{Def-UI}, respectively become
\be 
\begin{array}{lll}
&U(f)(z)=\exp(2z \Pe),\qquad \qquad \qquad &I(f)(z)=\exp(z^2),\\ \\
& E(f)(z)=\exp(2 z\Pe-z^2)    & \Phi(f)(\xi)=\frac{\sqrt{\pi}\exp\left({\Pe^2}\right)}{2}   \Big( \erf(\Pe)-\erf(\Pe-\xi)\Big).
\end{array}
\ee
As a consequence we get that the explicit solution to the problem with a Dirichlet condition at the fixed face governed by \eqref{Ec:Calor}-\eqref{Cond:FronteraInicial},  is obtain trough the solution to the ordinary  differential problem \eqref{EcDif-Temp}-\eqref{EcLambda-Temp}, given by
\be
f(\xi)=\frac{\erf(\Pe)-\erf(\Pe-\xi)}{\erf(\Pe)-\erf(\Pe-\lambda)},\qquad 0\leq \xi\leq \lambda,
\ee
where $\lambda=\lambda(\Pe)$ is the unique solution to the following equation
\be
\Ste = \sqrt{\pi}\; \;\lambda \Big( \erf(\Pe)-\erf(\Pe-\lambda)\Big) \exp\left( (\Pe-\lambda)^2 \right).
\ee

\medskip
In a similar way we get that the problem with a Neumann condition governed by  \eqref{Ec:Calor},\eqref{Cond:Flujo x=0},\eqref{Cond:TempCambioFase}-\eqref{Cond:FronteraInicial} is equivalent to the ordinary differential problem \eqref{EcDif-f-flujo}-\eqref{EcLambda-flujo} whose  explicit solution (see \cite{Tu2018}) turns out to be
\be
f(\xi)=q^* \frac{\sqrt{\pi}\exp\left({\Pe^2}\right)}{2}   \Big( \erf(\Pe-\xi)-\erf(\Pe-\lambda)\Big),\qquad 0\leq \xi\leq \lambda,
\ee
where $\lambda$ is a solution to the following equation 
\be\label{Pec}
 \frac{q}{ \rho_0\ell\sqrt{\alpha_0}}=\lambda \exp\left(\lambda^2-2\lambda \Pe \right).
\ee

Notice that for $\Pe\leq \sqrt{2}$ it is obtained not only existence but also  uniqueness of solution to equation \eqref{Pec}.
\begin{comment}

{\color{black} sacamos lo que sigue?

\medskip
If we consider the problem  \eqref{Ec:Calor},\eqref{Cond:Convectiva x=0},\eqref{Cond:TempCambioFase}-\eqref{Cond:FronteraInicial}  with a Robin condition at the fixed face, we obtain the ordinary differential problem \eqref{EcDif-f-convectiva}-\eqref{EcLambda-convectiva} whose explicit solution is given by
\be
f(\xi)=\dfrac{1+\Bi \sqrt{\pi} \exp(\Pe^2) \Big(\erf(\Pe)-\erf(\Pe-\xi)\Big)}{1+\Bi \sqrt{\pi} \exp(\Pe^2) \Big(\erf(\Pe)-\erf(\Pe-\lambda)\Big)} ,\qquad\qquad 0\leq \xi\leq \lambda,
\ee
where $\lambda$ is a solution to the following equation
\be
\Ste\;\Bi\; \exp(2\lambda \Pe-\lambda^2)=\lambda \left[1+\Bi \sqrt{\pi} \exp(\Pe^2) \Big(\erf(\Pe)-\erf(\Pe-\lambda)\Big) \right] .
\ee

\medskip
In case we consider the problem \eqref{Ec:Calor},\eqref{Cond:RadiactivaConvectiva x=0},\eqref{Cond:TempCambioFase}-\eqref{Cond:FronteraInicial}  with a radiative-convective condition at $x=0$, an  ordinary differential problem \eqref{EcDif-f-radiactiva}-\eqref{EcLambda-radiactiva} arises,  whose explicit solution is given by
{\color{black}no me cierra este caso, estamos consiguiendo la solucion trivial f=cte=1, de ahi sale $\lambda=0$, eso significa que no se da el cambio de fase???} 
}

\end{comment}
\subsection{Linear thermal coefficients}

%{\color{black} aca en realidad no se recupera lo hecho por Kumar, sino que damos un caso particular donde consideramos los coeficientes como en Kumar, escribiria como lo que pongo en azul}

{\color{black} In this subsection we analyse the case where the thermal coefficients are given by
\be 
\rho(T)=\rho_0, \qquad  c(T)=c_0\Big( 1+\alpha \frac{T-T^*}{T_m-T^*}\Big),\qquad  k(T)=k_0\Big(1+\beta \frac{T-T^*}{T_m-T^*} \Big)
\ee
with $\alpha$ and $\beta$ given positive constants. In addition, we consider a velocity given by $v(T)=\frac{\mu(T)}{\sqrt{t}}$ with
\be \mu(T)=\rho_0 c(T) \sqrt{\alpha_0} \; \Pe
,\ee
 where $\Pe$ denotes the Peclet number.
 
This particular case appears in \cite{SiKuRa2019A}, where it is  considered the problem with a convective condition \eqref{Ec:Calor},\eqref{Cond:Convectiva x=0},\eqref{Cond:TempCambioFase}-\eqref{Cond:FronteraInicial}.

From \eqref{LNv-Estrella} and \eqref{Def:muEstrella} we get that $$L^*(f)=1+\beta f,\qquad \qquad N^*(f)=1+\alpha f,\qquad\qquad \mu^*(f)=\Pe \left(1+\alpha f \right).$$ 
Notice that as $f\in C^0[0,\lambda]$ with $0\leq f\leq 1 $, we get that  $L^*$, $N^*$ and $\mu^*$ verify
$$1\leq L^*(f)\leq 1+\beta,\qquad 1\leq N^*(f)\leq 1+\alpha, \qquad \Pe\leq \mu^*(f)\leq \Pe(1+\alpha)$$
In addition $L^*$, $N^*$ and $\mu^*$ satisfy hypothesis \eqref{Hipotesis-L*}-\eqref{Hipotesis-mu*} with $L_m=1$, $L_M=1+\beta$, $\widetilde{L}=\beta$; $N_m=1$, $N_M=1+\alpha$, $\widetilde{N}=\alpha$, $\mu_m=\Pe$, $\mu_M=\Pe(1+\alpha)$ and $\widetilde{\mu}=\Pe\, \alpha$.
\medskip

Moreover the  function  $\Phi$ defined by \eqref{Def-Phi} becomes

\be
\Phi(f)(\xi)=\bint_0^{\xi} \dfrac{\exp\left(2\bint_0^z  (\Pe-z) \tfrac{1+\alpha f(z)}{1+\beta f(z)}\right)}{1+\beta f(z)}\dd z.
\ee 
 Taking into account the hypothesis assumed in Theorem $\ref{teotemp}$, the coefficients $\alpha$, $\beta$ and the numbers $\Ste$, $\Pe$ must satisfy  the following condition
 \be
 2(1+\beta)\beta<1,
 \ee
that is, $0<\beta<\frac{\sqrt{3}-1}{2}$ and $\mathcal{E}(\lambda_2)<1$ where $\mathcal{E}$ is given by 
 \be
\mathcal{E}(\lambda)=2(1+\beta)\exp((1+\alpha)\lambda^{2})D_{4}(\lambda)
 \ee with
 $
 D_{4}(\lambda)
 $  given by \eqref{D1-4} and $\lambda_2$  defined by \eqref{Lambda2-temp}.

Those hypothesis are sufficient conditions in order to guarantee the existence of solution when a Dirichlet or Robin condition are imposed at the fixed face.

In case we consider $\Pe=0$  and  $\alpha=0$ we recover the problem studied in \cite{CeSaTa2018-a} and  \cite{BrNa2019} governed by
\begin{align}
&\Big((1+\beta f)f'(\xi)\Big)'+2 f'(\xi) \xi=0 ,& 0<\xi<\lambda \\
& (1+\beta f(0))f'(0)=2\Bi f(0),  \\
&  f(\lambda)=1, \\
& f'(\lambda)=\dfrac{2\lambda}{(1+\beta)\Ste},
\end{align}
where the existence of solution is obtained trough the Generalized Modified Error Function.

 }

\section{Conclusions}

We have  studied four different one-phase Stefan problems for a semi-infinite domain,  with the special feature of involving a moving phase change material as well as  temperature
dependent thermal coefficients.
All the problems that we have  analysed  were governed by the diffusion-convection  equation, where the uniform speed that appears in the convective term not only depends  on the temperature but also on time.
We have proved existence of at least one similarity solution  imposing  Dirichlet, Neumann, Robin or radiative-convective boundary condition at the fixed face. 
In each case, we have obtained an equivalent ordinary differential problem from where it was formulated an  integral equation coupled with a condition for the parameter that characterizes the free boundary. The system  obtained was solved though a double-fixed point analysis.
Moreover, we have provided the solutions to some particular problems that arise when we set the thermal coefficients to be constant or linear functions of the temperature.

\section*{Acknowledgement}
The present work has been partially sponsored by the Project PIP No 0275 from CONICET-UA, Rosario, Argentina, ANPCyT PICTO Austral 2016 No 0090 and 
the European Union's Horizon 2020 Research and
Innovation Programme under the Marie Sklodowska-Curie grant agreement 823731 CONMECH

\bibliographystyle{unsrt}
\bibliography{Biblio}

\end{document}